\documentclass[12pt, a4paper]{article}
\usepackage{afterpage}
\usepackage{algorithm}
\usepackage{algorithmicx}
\usepackage{algpseudocode}
\usepackage{authblk}
\usepackage{amsmath}
\usepackage[title]{appendix}
\usepackage{amsfonts}
\usepackage{booktabs} 
\usepackage[font=small,labelfont=bf, hypcap=false]{caption} 
\usepackage{chngcntr}
\usepackage{comment} 
\usepackage{csquotes} 
\usepackage{csvsimple} 
\usepackage{diagbox}
\usepackage{enumitem}
\usepackage{eqnarray}
\usepackage{float} 
\usepackage[T1]{fontenc} 
\usepackage{geometry}
\usepackage{graphicx}
\usepackage[colorlinks=true, allcolors=blue]{hyperref}
\usepackage[none]{hyphenat} 
\usepackage{lineno} 
\usepackage{listings}
\usepackage{lipsum}
\usepackage{longtable}
\usepackage{lscape}
\usepackage{mathrsfs}
\usepackage{multirow}
\usepackage{orcidlink}
\usepackage{pdflscape}
\usepackage[]{ragged2e}
\usepackage{subcaption}
\usepackage{setspace} 
\usepackage{threeparttable} 
\usepackage{tabularray}
\usepackage{titlesec} 

\geometry{a4paper,
        top=1in,
        bottom=1in,
        left=1in,
        right=1in,
        marginparwidth=0.5in
        }
        
\onehalfspacing 
\titleformat{\section} 
{\normalfont\Large\bfseries}{\thesection.}{1em}{}



\captionsetup[table]{position=top} 

\makeatletter

\makeatother

\graphicspath{{fig/}{fig/2d/}{fig/3d}}

\usepackage{amssymb}
\usepackage{amsmath}
\usepackage{amsthm}
\usepackage{amsbsy}
\usepackage{color}
\newtheorem{theorem}{Theorem}[section]
  
   \newtheorem{definition}[theorem]{Definition}

    \newtheorem{remark}[theorem]{Remark}

\numberwithin{equation}{section}
\usepackage{subcaption}

\usepackage{bm}
\usepackage{stmaryrd}
\usepackage{xparse}

\NewDocumentCommand{\dgal}{sO{}m}{%
  \IfBooleanTF{#1}
    {\dgalext{#3}}
    {\dgalx[#2]{#3}}%
}

\NewDocumentCommand{\dgalext}{m}{%
  \sbox0{%
    \mathsurround=0pt 
    $\left\{\vphantom{#1}\right.\kern-\nulldelimiterspace$%
  }%
  \sbox2{\{}%
  \ifdim\ht0=\ht2
    \{\kern-.45\wd2 \{#1\}\kern-.45\wd2 \}%
  \else
    \left\{\kern-.5\wd0\left\{#1\right\}\kern-.5\wd0\right\}%
  \fi
}

\NewDocumentCommand{\dgalx}{om}{%
  \sbox0{\mathsurround=0pt$#1\{$}%
  \sbox2{\{}%
  \ifdim\ht0=\ht2
    \{\kern-.45\wd2 \{#2\}\kern-.45\wd2 \}%
  \else
    \mathopen{#1\{\kern-.5\wd0 #1\{}
    #2
    \mathclose{#1\}\kern-.5\wd0 #1\}}
  \fi
}

\title{Recovery techniques for ﬁnite element methods}
\author[1]{\small Hailong Guo}
\author[2]{\small Zhimin Zhang}
\affil[1]{\small{School of Mathematics and Statistics, The University of Melbourne, Parkville, VIC,  3052, Australia (\href{mailto:hailong.guo@unimelb.edu.au}{hailong.guo@unimelb.edu.au})}}
\affil[2]{\small{Department of Mathematics, Wayne State University, Detroit, 48202, MI, USA(\href{mailto:ag7761@wayne.edu}{ag7761@wayne.edu})}}
\date{}

\begin{document}
\newpage
\maketitle
\begin{abstract}
Post-processing techniques are essential tools for enhancing the accuracy of finite element approximations and achieving superconvergence. Among these, recovery techniques stand out as vital methods, playing significant roles in both post-processing and pre-processing. This paper provides an overview of recent developments in recovery techniques and their applications in adaptive computations. The discussion encompasses both gradient recovery and Hessian recovery methods. To establish the superconvergence properties of these techniques, two theoretical frameworks are introduced. Applications of these methods are demonstrated in constructing asymptotically exact {\it a posteriori} error estimators for second-order elliptic equations, fourth-order elliptic equations, and interface problems. Numerical experiments are performed to evaluate the asymptotic exactness of recovery type {\it a posteriori} error estimators.
\end{abstract}

\noindent\textbf{Keywords}: recovery, post-processing, adaptive, a posteriori, superconvergence,  \\ ultraconvergence, asymptotically exact, gradient recovery, Hessian recovery, unfitted\\

\section{Introduction}
\label{sec:int}
In modern scientific computing, adaptive computation is one of the most important techniques for large-scale scientific computing, enabling problems to be solved in an optimal way. Roughly speaking, adaptive computation consists of a loop of four steps: Solve, Estimate, Mark, and Refine.

One of the key steps is how to obtain a reliable and computable error estimator. In the pioneering work by the late mathematicians Ivo Babu{\v{s}}ka and Werner C. Rheinboldt \cite{babuska1978aadaptive, babuska1978badaptive}, they proposed using the finite element solution to construct a computable  \textit{a posteriori} error estimator based on the residual of the partial differential equation. Since then, there has been extensive work in both mathematics and engineering on  \textit{a posteriori} error estimators, which can be roughly categorized into two different classes: residual type and recovery type. Other types of {\it a posteriori} error estimators include hierarchical error estimators \cite{bank1996hierarchical} and the equilibrated flux method \cite{ainsworth1993flux}, which can also be viewed as residual based.  Interested readers are referred to the monographs on adaptive finite element methods \cite{oden2000posterioribook, bangerth2003adaptivebook, verfurth2013aposteribook, babushka2001fembook, repin2008posteriorbook} and the review papers \cite{bonito2024adaptivereview, becker2001review, cc2002average, chamoin2023postererrorreview, nochetto2009adaptive, edtmayer2024adaptive} for the latest advances in this field.

Independently, there are another groups of computational mathematicians who found that in finite element methods, there are some special points that have better convergence than the optimal convergence of polynomial interpolation. These special points are the so-called superconvergent points \cite{wahlbin1995superconvergencebook, deboor1973super}. In the seminal work of Douglas and Dupont \cite{douglas1973superconvergenceproof, douglas1973super}, they proved that the finite element solution at nodes can achieve $2k$-order superconvergence for two-point boundary value problems when a $k$th order Lagrange element is used. After that, there has been tremendous pursuit of the theoretical analysis of superconvergence phenomena, with famous methods like the quasi-projection method \cite{douglas1974super}, local averaging method \cite{bramble1977averaging, thomee1977highorder}, element analysis method \cite{chen20122kconjecture, chen1998elementanalysis, cao2014superdg, cao2015fvm2k, zlamal1977somesuper, zlamal1978super}, computer-based method \cite{babushka1996computerassistant, lin2008naturalsuper}, and local symmetry theory \cite{schatz1996super, wahlbin1998generalprinciple}. See the monographs \cite{chen1995high, wahlbin1995superconvergencebook, zhu1989superconvergencebook, chen2001structure} for comprehensive results in superconvergence.

Those two ideas are magically connected when Zienkiewicz and Zhu tried to construct an easy-to-use  \textit{a posteriori} error estimate for the engineering community \cite{zhu1987simpleerror}, known as the ZZ estimator. They proposed the superconvergent patch recovery (SPR) method \cite{zhu1992spr1, zhu1992spr2}. The key idea of the SPR is to locally use the optimal sampling points of stress to reconstruct a more accurate gradient. The optimal sampling points turn out to be the superconvergent points, at least in the one-dimensional case \cite{zienkiewicz2013fembook}. Computational results have even shown that the recovered gradients can be superconvergent to the exact gradient. Since then, there have been extensive efforts to improve SPR and extend it from continuous finite element methods to other numerical methods, as seen in \cite{boroomand1997rep, boroomand1997irep, ubertini2004patchenergy, rodenas2007improvedspr, gonzalez2013recovery, bordas2007recovery, picasso2003anisotropic, maisano2006recovery, wei2010sprsurface, feischl2014bemzz, skumar2017spr4iga, bartels2024flux,fortin2024recovery}, just to name a few. To date, SPR has become a standard tool in practical science and engineering computation, as evidenced by its implementation in commercial finite element software such as Abaqus and ANSYS. To uncover the mechanism behind its extraordinary numerical performance, extensive theoretical investigations have been conducted, revealing that the recovered gradient using SPR possesses the superconvergence property\cite{zhang1998analysisofspr1, zhang1998analysisofspr2, libo1999analysisofspr, xu2004analysisofrecovery} or even the ultraconvergence property  \cite{zhang1996ultra, zhang2003ultranconvergencezz, zhang2000ultraconvergenceofpatchrecovery}.  The term "ultraconvergence" refers to a convergence rate that is two orders higher than the optimal rate, a concept first introduced by Zienkiewicz and Zhu in \cite{zhu1992spr1}.

Although SPR adopts the stress points as the sampling points, it loses the magic of superconvergence, even on some uniform meshes, such as uniform meshes with a chevron pattern. To achieve better superconvergence properties, Zhang and Naga proposed a new gradient recovery technique called polynomial preserving recovery (PPR) \cite{zhang2005ppr, naga2004pprposteriori, naga2005ppr2d3d}. Unlike SPR, PPR locally fits a higher-order polynomial using nodal points as sampling points, and then defines the recovered gradient by differentiation. Due to the reproducibility of the linear least-square procedure, PPR has been proven to preserve the exact gradient when the exact solution is a polynomial of degree $k+1$ and a Lagrange element of degree $k$ is used. This, in turn, implies the unconditional consistency of PPR on arbitrary meshes. As a result of  its improved superconvergent property, PPR has attracted considerable attention from both theoretical and practical perspectives, as seen in \cite{wu2007superconvergenceonadaptivemesh, li2024fvmppr, guo2016pprboundary, guo2017hessianrecovery, picasso2011hessianrecovery,  reusken2013levelsetrecovery, huang2014nonconvergenthessian, huang2005meshquality, caoweiming2015granisotropic, chen2014super3d, guo2017grifem, guo2018grcutfem}. It has become the superconvergent tool in the commercial finite element software COMSOL Multiphysics.

To demonstrate the main difference between SPR and PPR, we consider the following one-dimensional continuous piecewise linear finite element function $u_h$ on the mesh as plotted in Figure \ref{fig:vissol}. The gradient is a piecewise constant function. It is well known from classical approximation theory \cite{brenner2008fembook, ciarlet2002fembook} that it is a linear approximation to its continuous counterpart. To achieve a better approximation, the most straightforward way is to construct a piecewise linear function, denoted by $G_hu_h$, which can be completely defined through its value at each nodal point $x_i$. There are several different choices to define $G_hu_h(x_i)$. Different choices correspond to different gradient recovery methods in the literature. The popular choices are:
\begin{enumerate}
	\item Simple (weighted) averaging method \cite{wilson1963sa, cc2002average}: Define $G_hu_h(x_i)$ as
	 \begin{equation}\label{equ:sa}
	 	(G_hu_h)(x_i) = \frac{1}{2}( u_h'|_{K_{i-1}} +  u_h'|_{K_{i}}) \quad 
	 	\left(\text{ or  } \frac{|K_{i-1}|}{|K_{i-1}|+|K_{i}|} u_h'|_{K_{i-1}} +
	 	\frac{|K_{i}|}{|K_{i-1}|+|K_{i}|} u_h'|_{K_{i}}\right).
	 \end{equation}
	\item Superconvergent patch recovery \cite{zhu1992spr1, zhu1992spr2}: Firstly, fit a linear polynomial on $K_{i-1} \cup K_{i}$ in the sense of 
	\begin{equation}
	p_1 = \arg\min_{p\in\mathbb{P}_1(K_{i-1}\cup K_{i})}( (p(u_{i-1/2}) -  u_h'|_{K_{i-1}} )^2
		+ (p(u_{i+1/2}) -  u_h'|_{K_{i}} )^2);
	\end{equation}
	where $\mathbb{P}_1(K_{i-1}\cup K_{i})$ is the space of linear polynomials on $K_{i-1}\cup K_{i}$. The midpoints are defined as  $x_{i-1/2} = \frac{1}{2}(x_{i-1}+x_i)$ and $x_{i+1/2} = \frac{1}{2}(x_i+x_{i+1})$. Then, define $(G_hu_h)(x_i) = p_1(x_i)$.
	\item  Polynomial preserving recovery \cite{zhang2005ppr, naga2005ppr2d3d}: Firstly, fit a quadratic polynomial on $K_{i-1} \cup K_{i}$ in the sense of
	\begin{equation}
		p_2 = \arg\min_{p\in\mathbb{P}_2(K_{i-1}\cup K_{i})}( (p(x_{i-1}) -  u_h(x_{i-1}))^2
		+  (p(x_{i}) -  u_h(x_{i}))^2 +  (p(x_{i+1}) -  u_h(x_{i+1}))^2);
	\end{equation}
	where $\mathbb{P}_2(K_{i-1}\cup K_{i})$ is the space of quadratic polynomials on $K_{i-1}\cup K_{i}$.  Then, define $(G_hu_h)(x_i) = p_2'(x_i)$.
\end{enumerate}
When the meshes are uniform, the $(G_hu_h)(x_i)$ given by all the above three methods are the same, which is a second-order central finite difference scheme. Regarding the recovery operator as a finite difference stencil is also one of the key ideas to establish its superconvergence (or ultraconvergence) on translation invariant meshes \cite{zhang2005ppr, naga2005ppr2d3d, guo2017hessianrecovery}.  

\begin{figure}[!h]
   \centering
  \includegraphics[width=\textwidth]{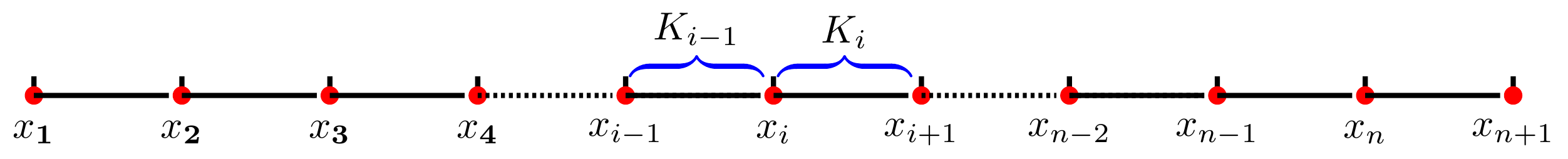}
   \caption{Illustration of the setup for recovery techniques in one-dimensional space. }
   \label{fig:vissol}
\end{figure}

The main purpose of this paper is to survey the recent developments in polynomial preserving recovery and its applications in adaptive computation. PPR offers a universal, meshfree framework for post-processing numerical solutions, allowing various numerical methods to be unified under the same conceptual umbrella. In this work, we focus on three distinct recovery techniques: gradient recovery, Hessian recovery, and recovery methods for non-smooth problems. For each technique, we discuss its mathematical properties, including the relationship with differentiation matrices.
To analyze the superconvergence properties of these recovery techniques, we present two general frameworks. The first framework leverages supercloseness results on mildly structured meshes. The second framework is based on the argument of superconvergence by different quotients on translation invariant meshes.
We also highlight the applications of these recovery techniques in the construction of easy-to-use {\it a posteriori} error estimators. A key distinguishing feature of the error estimators derived from these recovery techniques is their asymptotic exactness.

This review article is organized as follows. In Section \ref{sec:not}, we introduce the relevant notations for finite element methods. In Section \ref{sec:ppr}, we present the definition of three recovery techniques and discuss their polynomial preserving properties. In Section \ref{sec:super}, we introduce two frameworks to analyze the superconvergence properties of the recovery techniques. In Section \ref{sec:estimator}, we explore applications in constructing asymptotically exact {\it a posteriori} error estimators for three exemplary classes of partial differential equations. In Section \ref{sec:ne}, we provide extensive numerical examples to illustrate the performance of recovery type {\it a posteriori} error estimators. Finally, in Section \ref{sec:con}, we draw some conclusions and discuss other applications of recovery techniques.

\section{Notations}\label{sec:not}

For the sake of simplicity, we consider $\Omega$ to be a polygonal domain in $\mathbb{R}^2$ with Lipschitz continuous boundary. For any subdomain $A\subset \Omega$,  let $W^{k,p}(A)$ be the Sobolev space \cite{brenner2008fembook, ciarlet2002fembook,evans2010pdebook}  on $A$ with norms $\|\cdot\|_{k,p, A}$ and semi-norms $|\cdot|_{k,p, A}$, where $1\le p \le \infty$. When $p=2$, $W^{k, p}(A)$ is the $H^{k}(A)$ and the subindex $p$ is omitted in the corresponding (semi-)norms. The set of polynomials on $A$ with degree less than or equal to $k$ is denoted  as $\mathbb{P}_k(A)$. The dimensionality of  $\mathbb{P}_k(A)$ is $n_k = \frac{1}{2}(k+1)(k+2)$.

Let $\alpha = (\alpha_1, \alpha_2)$ be a 2-index, and its length is given by $|\alpha| = \alpha_1 + \alpha_2$. With the 2-index notation, the weak derivative of a function $u$ can be denoted as
\begin{equation}\label{equ:multindex}
	D^{\alpha}u := \frac{\partial^{|\alpha|}u}{\partial x^{\alpha_1}\partial y^{\alpha_2}}.
\end{equation}
In particular, the Hessian operator is denoted as
\begin{equation}
	H=\left(\begin{array}{cc}
\displaystyle\frac{\partial^2}{\partial x^2} & \displaystyle\frac{\partial^2}{\partial x \partial y} \\
\displaystyle\frac{\partial^2}{\partial y \partial x} & \displaystyle\frac{\partial^2}{\partial y^2}
\end{array}\right) .
\end{equation}

Let $\mathcal{T}_h$ be a shape-regular triangulation of $\Omega$ into triangles \cite{brenner2008fembook, ciarlet2002fembook}, i.e. $\overline{\Omega} = \bigcup\limits_{K\in\mathcal{T}_h}K$.  Denote by $h_K$ the diameter of an element $K$ in $\mathcal{T}_h$, and let $h = \max\limits_{K\in\mathcal{T}_h}h_K$.  In this paper, we  focus on the continuous Lagrange finite element method. We define the finite element space $S_{h}$ as
\begin{equation}\label{equ:femspace}
	S_{h} = \left\{v_h \in C^0(\Omega): v_h|_{K} \in \mathbb{P}_k(K) \quad \forall K \in \mathcal{T}_h \right\}. 
\end{equation}
Let $\mathcal{N}_h$ denote the set of nodal points, and let $\mathcal{E}_h$ denote the set of edges in the triangulation $\mathcal{T}_h$.  The nodal basis functions of $S_{h}$ are
denoted by $\{\phi_{i}\}_{i =1}^{|\mathcal{N}_h|}$ with $\phi_z(z') = \delta_{zz'}$, where $\delta_{zz'}$ is the Kronecker delta and $|\mathcal{N}_h|$ is the cardinality of $\mathcal{N}_h$. For any function $u\in C^0(\bar\Omega)$, let $u_I$ be the interpolation of $u$ in $S_h$, i.e.  
\begin{equation}\label{equ:interp}
	u_I = \sum_{i=1}^{|\mathcal{N}_h|}u(z_i) \phi_i.
\end{equation}

For $A \subset \Omega$, denote by $S_{h}(A)$ the restriction of functions  $S_{h}$ in $A$ and by $S_{h}^{\text{comp}}(A)$ the  set of  functions in $S_{h}(A)$ with  compact support in the interior of $A$.  Let  $ \Omega_0 \subset\subset \Omega_1 \subset\subset \Omega_2 \subset\subset \Omega $ be  the nested subdomains separated by a distance $ d \geq ch $.  Suppose $\tau$ is a parameter which is typically a constant times $h$.  Similar to \cite{wahlbin1995superconvergencebook},  the translation operator $T^{\ell}_{\tau}$ is defined as 
  \begin{equation} \label{equ:translateoperator}
  	T^{\ell}_{\tau}v(x) = v(x+\tau\ell), 
  \end{equation}
and 
  \begin{equation} \label{equ:translate}
  	T^{\ell}_{\nu\tau}v(x) = v(x+\nu\tau\ell), 
  \end{equation}
 for some integer $\nu$.  The finite element space $S_{h}$ is called translation invariant  by $\tau$ in the direction $\ell$ \cite{wahlbin1995superconvergencebook} if 
 \begin{equation}\label{equ:translationinvariant}
     T^{\ell}_{\nu\tau}v \in S_{h}^{\text{comp}}(\Omega_2), \quad  \forall
     v \in S_{h}^{\text{comp}}(\Omega_1).
 \end{equation}
In that case, we also say the mesh $\mathcal{T}_h$ is translation invariant.  As the demonstrated in \cite{guo2017hessianrecovery}, the uniform meshes of regular, chevron, criss-cross,  and unionjack patterns are translation invariant  as demonstrated in  Figure \ref{fig:timesh}. For example, the uniform mesh of the regular pattern is translation invariant by $h$ in the directions $(1,0)$ and $(0,1)$,
by $2\sqrt{2}h$ in the directions $\left( \pm \frac{\sqrt{2}}{2}, \frac{\sqrt{2}}{2} \right)$,
and by $\sqrt{5}h$ in the directions
$\left( \frac{2\sqrt{5}}{5}, \pm \frac{\sqrt{5}}{5} \right)$ and $\left( \pm \frac{\sqrt{5}}{5}, \frac{2\sqrt{5}}{5} \right)$, etc.

\begin{figure}[!h]
\captionsetup[subfigure]{}
   \centering
   \subcaptionbox{ \label{fig:regular}}
  {\includegraphics[width=0.24\textwidth]{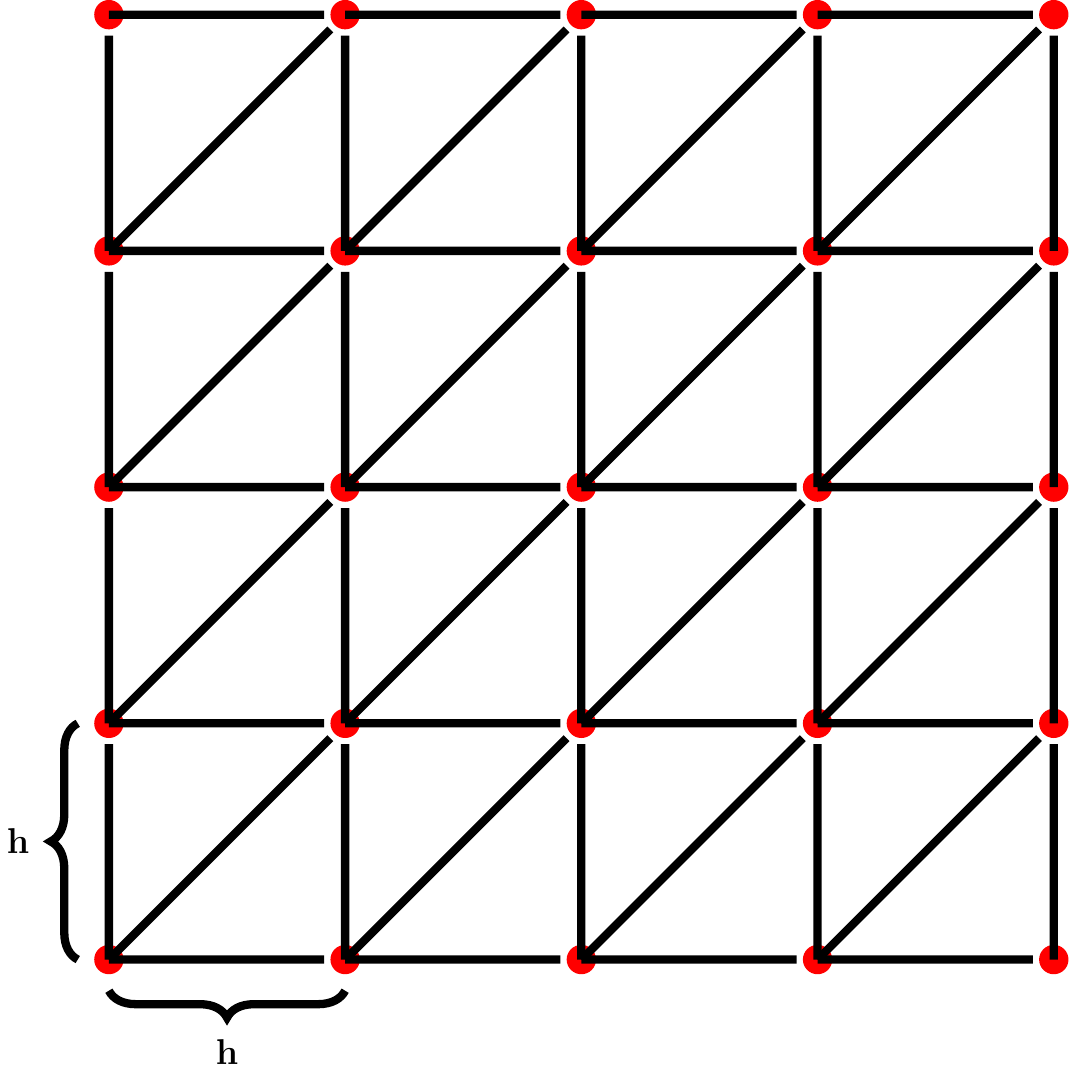}}
  \subcaptionbox{\label{fig:chevron}}
   {\includegraphics[width=0.24\textwidth]{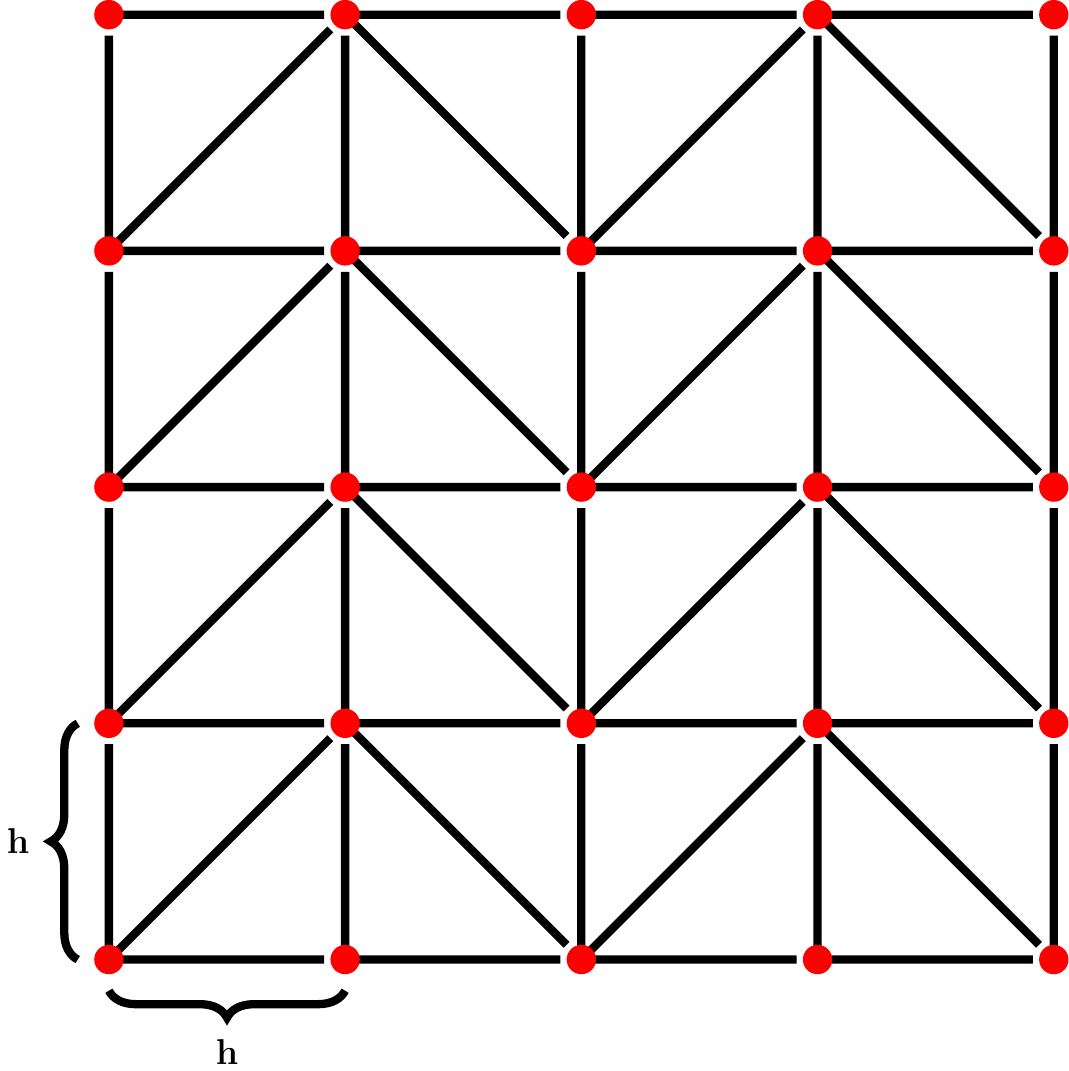}}
      \subcaptionbox{ \label{fig:crisscross}}
  {\includegraphics[width=0.25\textwidth]{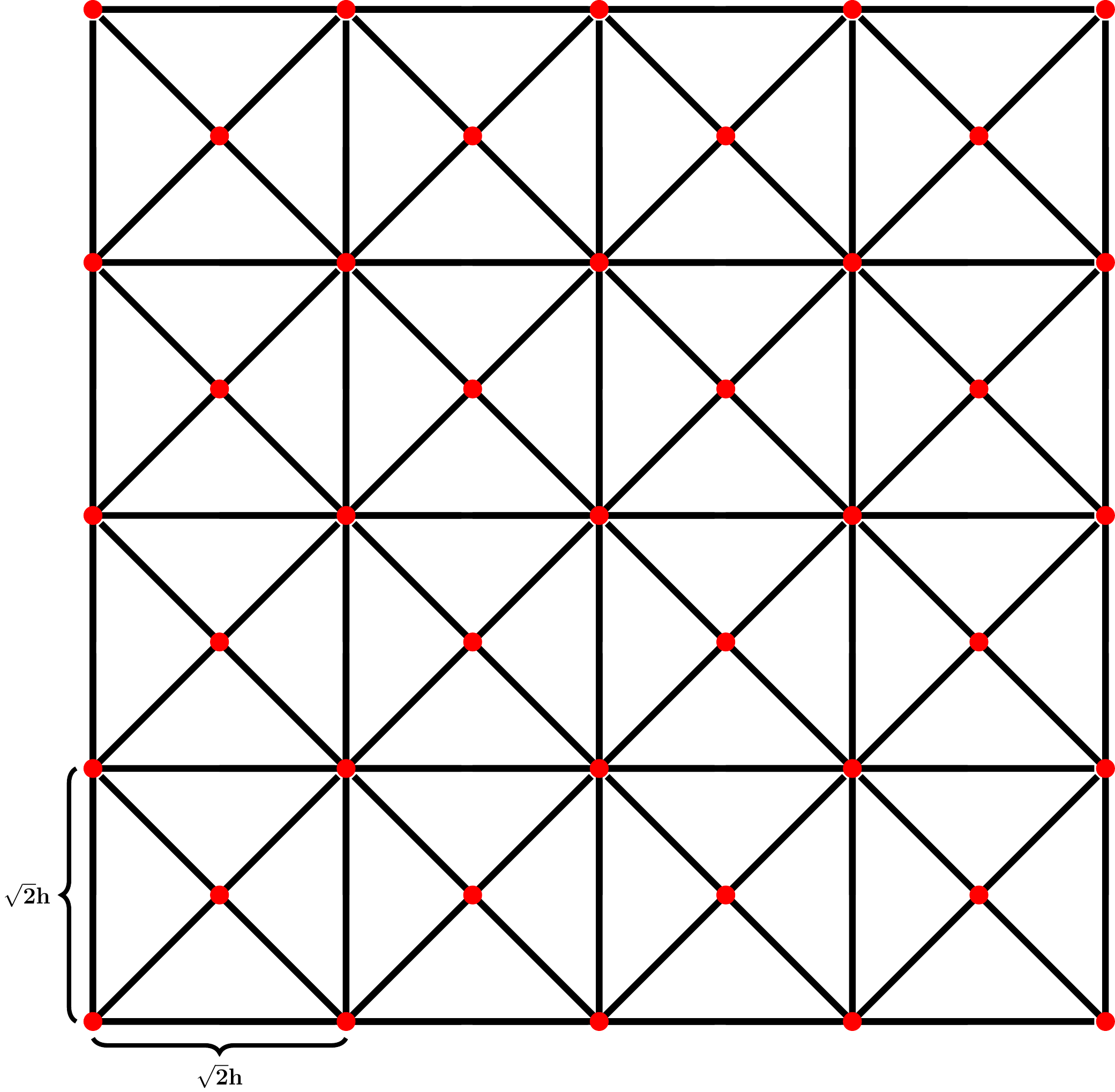}}
  \subcaptionbox{\label{fig:unionjack}}
   {\includegraphics[width=0.24\textwidth]{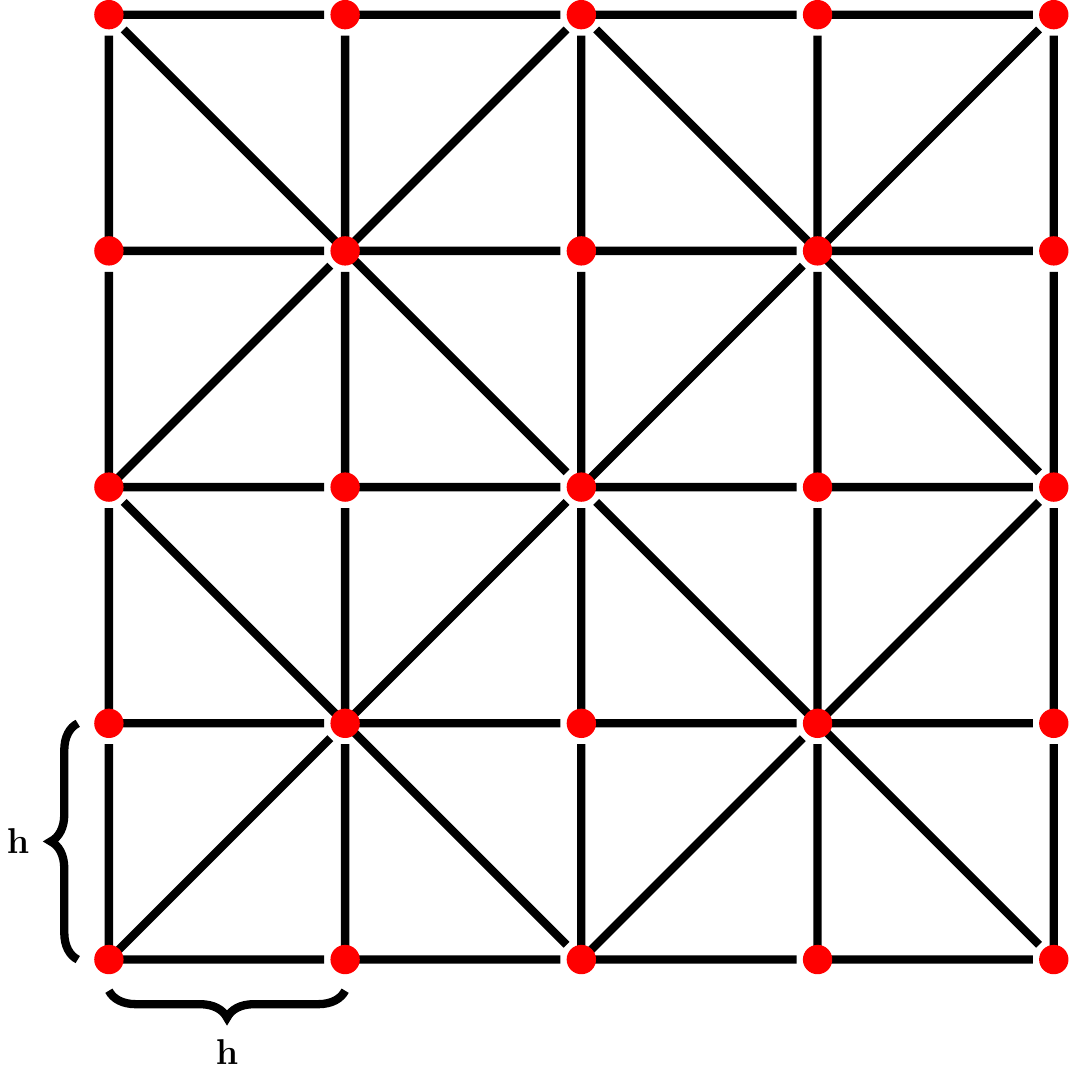}}
   \caption{Translation invariant meshes on the unit square: (a) uniform mesh with a regular pattern; (b) uniform mesh with a chevron pattern; (c) uniform mesh with a crisscross pattern; (d) uniform mesh with a Union Jack pattern. }\label{fig:timesh}
\end{figure}

To facilitate the  superconvergence analysis of the recovery operator on unstructured meshes,  we need to put some conditions on the mesh $\mathcal{T}_h$. Two adjacent triangles are said to form an $\mathcal{O}(h^{1+\alpha})$ approximate parallelogram if the lengths of any two opposite edges differ only by $\mathcal{O}(h^{1+\alpha})$. Based on this, we can define the following Condition $(\sigma, \alpha)$ for mildly structured meshes \cite{xu2004analysisofrecovery, guo2017hessianrecovery}:
\begin{definition}\label{def:cond}
	The triangulation $\mathcal{T}_h$ is said to satisfy the Condition $(\sigma, \alpha)$ if there exists a partition $\mathcal{T}_{1,h} \cup \mathcal{T}_{2,h}$ of $\mathcal{T}_h$ and positive constants $\alpha$ and $\sigma$ such that any two adjacent triangles in $\mathcal{T}_{1,h}$ form an $O\left(h^{1+\alpha}\right)$ parallelogram and
\begin{equation}
	\sum_{K \in \mathcal{T}_{2,h}}|K|=O\left(h^\sigma\right).
\end{equation}
\end{definition}

\begin{remark}
	We emphasize that Condition $(\sigma, \alpha)$ is not an artificial requirement. It is naturally satisfied by meshes generated by automated mesh generators, such as the Delaunay mesh generator \cite{liseikin1999meshgeneration}. This generator employs a \emph{diagonal-swapping} procedure, which adjusts the direction of certain diagonal edges to align adjacent element edges more closely and maximize the number of nodes with six attached triangles.
\end{remark}

Throughout this article, the letters $C$ or $c$, with or without subscripts, denote a generic constant that is independent of the mesh size $h$ and may vary with each occurrence. For convenience, we denote the relation $x \leq Cy$ as $x \lesssim y$.

\section{Polynomial preserving recovery}\label{sec:ppr}
In this section, we outline the construction of the polynomial preserving recovery method. We begin in Subsection \ref{ssec:gr} with an introduction to gradient recovery techniques. In the subsequent subsection, we focus on the definition of Hessian recovery techniques. In the final subsection, we explain gradient recovery techniques for both body-fitted and unfitted numerical methods for solving interface problems.

\subsection{Gradient recovery technique}\label{ssec:gr} Let $S_h$ be the continuous finite element space consisting of piecewise polynomials of degree $k$, as defined in \eqref{equ:femspace}. The PPR gradient recovery operator $G_h$ is defined as a linear operator mapping from $S_h$ to $S_h \times S_h$. A key observation is that the recovered gradient $G_h u_h$ is a function in the finite element space $S_h \times S_h$. Therefore, it is sufficient to define its value at each nodal point. The process can be formulated in three steps: (1) construct local patches of elements; (2) perform local recovery procedures; and (3) assemble the recovered data into a global expression.

\begin{figure}[!h]
   \centering
   \subcaptionbox{\label{fig:firstlaye}}
  {\includegraphics[width=0.32\textwidth]{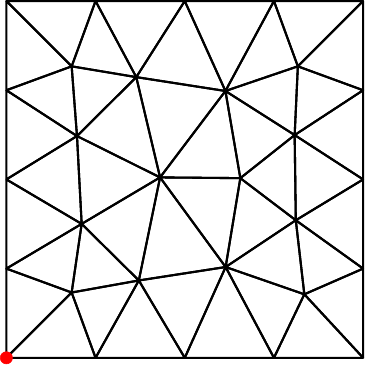}}
  \subcaptionbox{\label{fig:secondlayer}}
   {\includegraphics[width=0.32\textwidth]{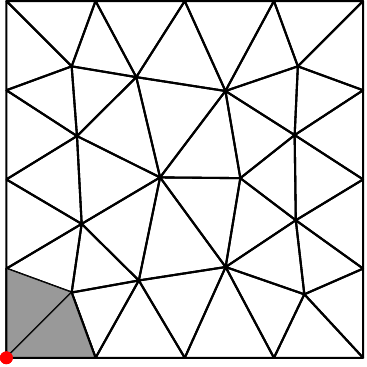}}
  \subcaptionbox{\label{fig:thirdlayer}}
  {\includegraphics[width=0.32\textwidth]{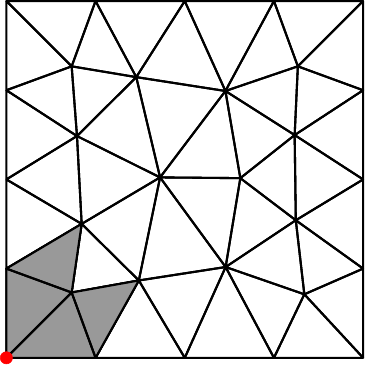}}
   \caption{Illustration of the definition of $L(z_i, n)$: (a) Plot of $L(z_i, 0)$; (b) Plot of $L(z_i, 1)$;
   (c) Plot of $L(z_i, 2)$.}\label{fig:patchlayer}
\end{figure}

To facilitate the construction of the local element patch, we first introduce the concept of a mesh layer around a vertex.  Let $z_i \in \mathcal{N}_h$ be a vertex of $\mathcal{T}_h$. For any positive integer $n$, we define $\mathcal{L}(z_i, n)$ as follows:
\begin{equation}
	\mathcal{L}\left(z_i, n\right)=
	 \begin{cases}z_i, & \text { if } n=0, \\
	 \bigcup\left\{K: K \in \mathcal{T}_h, K\cap \mathcal{L}\left(z_i, 0\right) \neq \emptyset\right\}, & \text { if } n=1, \\
	  \bigcup\left\{K: K \in \mathcal{T}_h, K \cap \mathcal{L}\left(z_i, n-1\right) \text { is an edge in } \mathcal{E}_h\right\}, & \text { if } n \geq 2 .
	 \end{cases}
\end{equation}
In Figure \ref{fig:patchlayer}, we illustrate $\mathcal{L}(z_i, n)$, where $z_i$ is represented as the red dotted point. As shown, $\mathcal{L}(z_i, 0)$ contains only the vertex $z_i$, while $\mathcal{L}(z_i, 1)$ consists of all elements that have $z_i$ as a vertex. Furthermore, $\mathcal{L}(z_i, 2)$ includes all elements in $\mathcal{L}(z_i, 1)$ along with their neighboring elements.
The local element patch $\Omega_{z_i}$ at $z_i$ is defined as
\begin{equation}
	\Omega_{z_i} = \mathcal{L}\left(z_i, n_i\right),
\end{equation}
where $n_i$ is the smallest integer that ensures a unique $(k+1)$th-order polynomial can be fitted in the least-squares sense using the nodal points in $\Omega_{z_i}$ as sampling points. For a vertex $z_i$ located on the boundary $\partial\Omega$, the local element patch $\Omega_{z_i}$ is defined as \cite{guo2016pprboundary}
\begin{equation}
\Omega_{z_i} = \bigcup_{j=0}^{J}\Omega_{z_{i_j},
}
\end{equation}
where $z_{i_j}$ ($j = 0, \ldots, J$) denotes the adjacent interior vertices of $z_i$.

For each $z_i \in \mathcal{N}_h$, it belongs to one of three categories: vertices, edge nodes, and internal nodes. First, we consider the case where the nodal point $z_i$ is a vertex. Let $B_{z_i}$ be the set of nodal points in $\Omega_{z_i}$ and $I_i$ be the set of indices of $B_{z_i}$. Using the nodal points $B_{z_i}$ as sampling points, we fit a polynomial of degree $(k+1)$ at $z_i$ in the following least-squares sense:
\begin{equation}\label{equ:localls}
    p_{z_i}(z) = \arg\min_{p \in \mathbb{P}_{k+1}(\Omega_{z_i})} \sum_{j \in I_i} \left| p(z_{i_j}) - u_h(z_{i_j}) \right|^2.
\end{equation}

To ensure numerical stability in practical computations, all calculations are performed in the local coordinate system. For such purpose, let
\begin{equation}
h_{z_i} := \max\{|z_{i_j} - z_{i_{\ell}}|: 
z_{i_j}, z_{i_{\ell}}\in B_{z_i} \} ;
\end{equation}
and define the coordinate transform map $F$ as
\begin{equation}\label{equ:coordtran}
	F: (x, y) \rightarrow (\xi, \eta) = \frac{(x,y)-(x_i, y_i)}{h_{z_i}},
\end{equation}
where $z = (x,y)$ and $\hat{z} = (\xi, \eta)$.  In term of the new local coordinate system, we can write the fitted polynomial $p_{z_i}$ as 
\begin{equation}\label{equ:rel}
	p_{z_i} = \mathbf{p}^T\mathbf{a} = \hat{\mathbf{p}}^T\hat{\mathbf{a}},
\end{equation}
where 
\begin{align*}
	&\mathbf{p}^T = (1, x, y, x^2, xy, y^2, \cdots, x^{k+1}, x^ky, \cdots, xy^k, y^{k+1}), \\
	&\hat{\mathbf{p}}^T = (1, \xi, \eta, \xi^2, \xi\eta, \eta^2, \cdots, \xi^{k+1}, \xi^k\eta, \cdots, \xi\eta^k, \eta^{k+1}),\\
	&\mathbf{a}^T = (a_1, a_2, a_3, a_4, a_5, a_6, \cdots, a_{n_{k+1}-5},  \cdots, a_{n_{k+1}}), \\
	&\hat{\mathbf{a}}^T = (a_1, h_{z_i}a_2, h_{z_i}a_3, h_{z_i}^2a_4, h_{z_i}^2a_5, h_{z_i}^2a_6, \cdots, h_{z_i}^{k+1}a_{n_{k+1}-5},  \cdots, h_{z_i}^{k+1}a_{n_{k+1}}).  
\end{align*}
Let $\hat{z}_{i_j} = F(z_{i_j})$ for all $j\in I_i$. The coefficients $\hat{\mathbf{a}}$ can be obtained by solving the normal equation
\begin{equation}\label{equ:normalequation}
	\hat{\mathbf{A}}^T\hat{\mathbf{A}}\hat{\mathbf{a}} = \hat{\mathbf{A}}^T\hat{\mathbf{b}},
\end{equation}
where 
\begin{equation*}
	\hat{\mathbf{A}}
	= \begin{pmatrix}
		1 & \xi_{i_0} & \eta_{i_0} & \xi_{i_0}^2 & \xi_{i_0}\eta_{i_0} & \eta_{i_0}^2 
		& \cdots & \xi_{i_0}^{k+1}& \cdots & \eta_{i_0}^{k+1} \\
		1 & \xi_{i_1} & \eta_{i_1} & \xi_{i_1}^2 & \xi_{i_1}\eta_{i_1} & \eta_{i_1}^2 
		& \cdots & \xi_{i_1}^{k+1}& \cdots & \eta_{i_1}^{k+1} \\
		\vdots & \vdots & \vdots & \vdots & \vdots & \vdots &  \ddots & \vdots & \ddots & \vdots \\
		1 & \xi_{i_{|I_i|-1}} & \eta_{i_{|I_i|-1}} & \xi_{i_{|I_i|-1}}^2 & \xi_{i_{|I_i|-1}}\eta_{i_{|I_i|-1}} & \eta_{i_{|I_i|-1}}^2 
		& \cdots & \xi_{i_{|I_i|-1}}^{k+1}& \cdots & \eta_{i_{|I_i|-1}}^{k+1}
	\end{pmatrix},
\end{equation*}
and 
\begin{equation*}
		\hat{\mathbf{b}} = 
	\begin{pmatrix}
		u_h(z_{i_0})\\
		u_h(z_{i_1})\\
		\vdots \\
		u_h(z_{i_{|I_i|-1}})
	\end{pmatrix},
\end{equation*}
with $|I_i|$ being the cardinality of the set $I_i$.  The recovered gradient $G_hu_h$ at the vertex $z_i$ is defined as 
\begin{equation}\label{equ:recoveredgrad}
	(G_hu_h)(z_i) :=  
	\frac{1}{h_{z_i}}
	\begin{pmatrix}
		\hat{a}_2\\
		\hat{a}_3 \\
	\end{pmatrix}.
\end{equation}

Second, if $z_i$ is a nodal point located on an edge connecting two vertices $z_{i_1}$ and $z_{i_2}$, the recovered gradient at $z_i$ is defined as
\begin{equation}\label{equ:edgegr}
    (G_h u_h)(z_i) = \beta p_{z_{i_1}}(z_i) + (1 - \beta) p_{z_{i_2}}(z_i),
\end{equation}
where $p_{z_{i_j}}$ is the least-squares fitted polynomial at $z_{i_j}$ for $j = 1, 2$, and $\beta$ is determined by the ratio of the distances of $z_i$ to $z_{i_1}$ and $z_{i_2}$.

Similarly, if $z_i$ is an interior nodal point located in a triangle formed by three vertices $z_{i_1}$, $z_{i_2}$, and $z_{i_3}$, the recovered gradient at $z_i$ is defined as
\begin{equation}\label{equ:elemgr}
    (G_h u_h)(z_i) = \sum_{j=1}^3 \beta_j p_{z_{i_j}}(z_i),
\end{equation}
where $\beta_j$ is the barycentric coordinate of $z_i$.

Once the recovered gradient $(G_hu_h)(z_i)$ is defined for all $z_i\in\mathcal{N}_h$,  the global recovered gradient can be interpolated as
\begin{equation}
	G_hu_h =  \sum_{i=1}^{|\mathcal{N}_h|}(G_hu_h)(z_i)\phi_{i}. 
\end{equation}

One key observation is that the gradient recovery operator $G_h$ is a linear operator from $S_h$ to $S_h \times S_h$. From linear algebra, it is known  that a linear operator from one finite-dimensional vector space to itself can be represented as a matrix linear transformation \cite{axler2024labook}. Let $\Phi$ be
the vector of basis functions, i.e. $\Phi = (\phi_{1}, \cdots, \phi_{|\mathcal{N}_h|})^T$.  Then for any function $u_h \in S_h$, it can be rewritten as 
\begin{equation}\label{equ:vecform}
u_h = \sum_{i=1}^{|\mathcal{N}_h|}u_{i}\phi_{i} =  \mathbf{u}^{T}\Phi, 
\end{equation}
where $\mathbf{u} = (u_1, \cdots, u_{|\mathcal{N}_h|})^T$ and $u_i$ is the value of $u_h$ at nodal point $z_i$.   Similarly,  the recovered gradient 
$G_hu_h$ can also be expressed as 
\begin{equation}
G_hu_h =
\begin{pmatrix}
	 G^x_hu_h\\
	 G^y_hu_h 
\end{pmatrix}
= 
\begin{pmatrix}
	 \mathbf{u}_x^{T}\Phi\\
	  \mathbf{u}_y^{T}\Phi
\end{pmatrix},
\label{equ:grad}
\end{equation}
where $\mathbf{u}_x$ and $\mathbf{u}_y$ are the vectors containing the recovered gradient values at the nodal points.  Since $G_h^x$ and $G_h^y$ are two linear  operator  from $S_{h}$ to $S_{h}$,  there exist two matrices ${\mbox{B}_h^x} \in \mathbb{R}^{|\mathcal{N}_h|\times |\mathcal{N}_h|}$ and ${\mbox{B}_h^y} \in \mathbb{R}^{|\mathcal{N}_h|\times |\mathcal{N}_h|}$ such that
\begin{equation}
\mathbf{u}_x = \mbox{B}_h^x \mathbf{u} \quad \text{            and           }\quad \mathbf{u}_y = \mbox{B}_h^y\mathbf{u}.
\label{equ:gradmatrix}
\end{equation}
The matrices $\mbox{B}_h^x$ and $\mbox{B}_h^y$ are referred to as the first-order differentiation matrices and play a similar role to the differentiation matrices in the spectral collocation method \cite{shen2011spectralbook}. By the definition of polynomial preserving recovery, both $\mbox{B}_h^x$ and $\mbox{B}_h^y$ are sparse matrices. In other words, the sparse matrices $\mbox{B}_h^x$ and $\mbox{B}_h^y$ can be constructed and reused. Consequently, the recovery procedure reduces to sparse matrix-vector multiplications, which can be performed in $\mathcal{O}(|\mathcal{N}_h|)$ operations.

\begin{remark}
	We emphasize that the PPR framework can be viewed as a unified approach for post-processing a wide class of numerical methods, encompassing not only classical finite element methods on simplicial, quadrilateral, or hexahedral meshes but also extending to modern methods based on polygonal and polyhedral meshes, such as mimetic finite difference methods \cite{beirao2014mfdmbook, shashkov1996mfdmbook} and virtual element methods \cite{beirao2013vem}. Additionally, PPR can be interpreted as a meshfree method, where the approach remains applicable as long as values of the numerical solution at specific nodal points are directly available or can be indirectly obtained via averaging procedures. In fact, we can alternatively define $B_{z_i}$ as
\begin{equation*}
    B_{z_i} = \left\{ z \in \mathcal{N}_h : |z - z_i| \le h_{z_i} \right\},
\end{equation*}
where $h_{z_i}$ is chosen to ensure that $B_{z_i}$ contains at least $n_{k+1}$ nodal points, thus guaranteeing the uniqueness of solutions to the least-squares problem \eqref{equ:localls}.  Geometric conditions for the uniqueness of the linear least-squares problem \eqref{equ:localls} were discussed by Zhang and Naga in \cite{naga2004pprposteriori}. For recent advancements in PPR applications to other numerical methods, we refer interested readers to \cite{du2019dgppr, song2015ipdgppr, song2015superdg} for discontinuous Galerkin methods, \cite{li2024fvmppr} for finite volume methods, \cite{guo2015crelement, guo2020scr} for (surface) Crouzeix-Raviart elements, \cite{dong2020pppr} for surface finite element methods, \cite{wang2019ppredgeelement} for N\'ed\'elec edge elements, and \cite{guo2019grvem, chi2019vemppr} for virtual element methods.
\end{remark}

\begin{remark}
We can make several improvements to the least-squares process. First, as observed in \cite{dong2020pppr}, the recovery procedure does not improve the accuracy of function approximation. We can remove one degree of freedom (DOF) in the least-squares fitting procedure by assuming
\begin{equation}
    \tilde{p}_{z_i}(\xi, \eta) = u_h(z_i) + \sum_{0 \le i, j \le k+1, i+j \le k+1} \hat{a}_{ij} \xi^i \eta^j,
\end{equation}
which reduces the size of the least-squares problem by 1. In particular, for the linear element case, we only need to solve a $5 \times 5$ rather than a $6 \times 6$ linear system. Second, formulating the linear least-squares problem using monomial basis functions on uniformly distributed sampling points is ill-conditioned for high-order elements.  We can address this by using orthogonal polynomials, such as the Legendre polynomials \cite{shen2011spectralbook}, as basis functions in the polynomial space of degree $k+1$.
\end{remark}

\begin{remark}
Anithworth and Oden stated in \cite{oden2000posterioribook} that a good recovery operator should satisfy three conditions: the consistency condition, the localization condition, and the boundedness \& linearity conditions. The polynomial preserving recovery satisfies all three conditions. In fact, the second condition can be restated as the requirement that the first-order differentiation matrices $\mathrm{B}_h^x$ and $\mathrm{B}_h^y$ are sparse. In contrast, the first-order differentiation matrices of the global $L^2$-projection \cite{hinton1974globall2} involve the inverse of a sparse matrix that is dense.   We would also like to emphasize that sparsity is a crucial component when the recovery operator is employed for pre-processing purposes, such as constructing new finite element methods for fourth-order elliptic problems, as demonstrated in \cite{guo2018linearfem, chen2017rbfemeigenvalue, xu2019hbfem}.
\end{remark}

\begin{remark}
In general, recovery techniques deteriorate near the boundary, necessitating special treatment in these regions. In \cite{guo2016pprboundary}, Guo et al. proposed two strategies to improve the performance of PPR near the boundary. The first strategy involves initially applying the standard PPR local least-squares fitting procedure for nearby interior vertices, followed by averaging the gradients of the obtained polynomials at the target boundary vertex to compute the recovered gradient. The second strategy constructs a relatively large element patch by merging the element patches of selected interior vertices near the target boundary vertex. The key idea is to replace the boundary patch with interior patches, which contain more information and exhibit certain symmetric properties.
\end{remark}

\begin{remark}
 The gradient recovery operator $G_h$ can also be interpreted as  a "smoothing" operator. It smooths a piecewise defined (discontinuous) function into a piecewise defined continuous function, making further differentiation possible. Such an observation is crucial for designing linear finite element methods for higher-order partial differential equations \cite{ cai2024surfacebiharmonic, guo2018linearfem, chen2017rbfemeigenvalue}.
\end{remark}

\begin{figure}[!h]
\captionsetup[subfigure]{}
   \centering
   \subcaptionbox{ \label{fig:gr_regular}}
  {\includegraphics[width=0.38\textwidth]{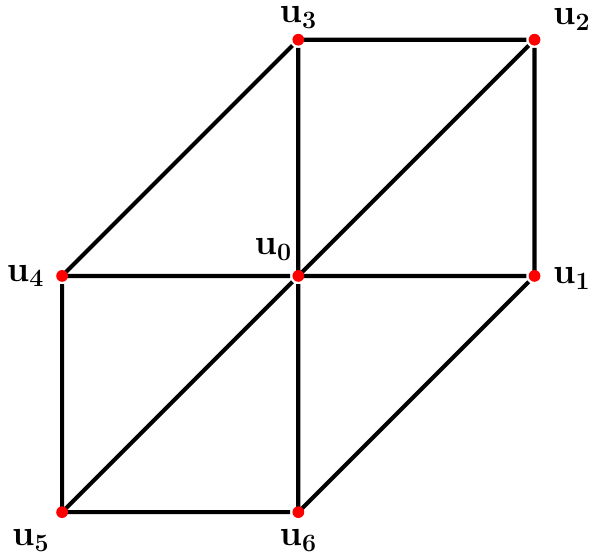}}
  \subcaptionbox{\label{fig:gr_chevron}}
   {\includegraphics[width=0.38\textwidth]{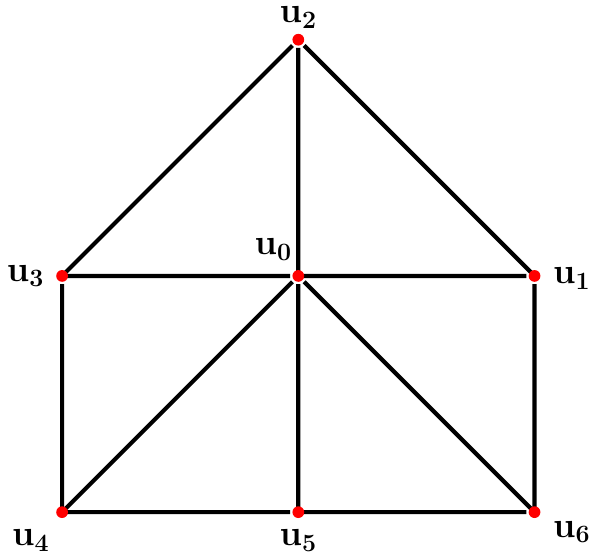}}
   \caption{Gradient recovery on uniform meshes: (a) regular pattern; (b) chevron pattern. }\label{fig:gr}
\end{figure}

To illustrate the main idea of PPR, we present two examples using continuous linear elements on uniform meshes.

\textbf{Example 1}: In this example, we consider the uniform mesh with a regular pattern. The local numbering of nodal points is from 0 to 6, as shown in Figure \ref{fig:gr_regular}. The local coordinates are defined as
\begin{equation}
	\boldsymbol{\xi}^T=(0, 1, 1, 0, -1, -1, 0) \quad \text{and} \quad 
	\boldsymbol{\eta}^T = (0, 0, 1, 1, 0, -1, -1).
\end{equation}
Let $\mathbf{e}^T = (1, 1, 1, 1, 1, 1, 1)$. 
 Then, we define the matrix  $A = (\mathbf{e}, \boldsymbol{\xi}, \boldsymbol{\eta}, \boldsymbol{\xi}^2, \boldsymbol{\xi}\boldsymbol{\eta}, \boldsymbol{\eta}^2)$.  We can compute 
\begin{equation}
(\hat{\mathbf{A}}^T\hat{\mathbf{A}})^{-1}\hat{\mathbf{A}}^T = \frac{1}{6}
	\begin{pmatrix}
		    6&     0  &    0  &  0 & 0  & 0 & 0 \\        
		    0 & 2 & 1 & -1 & -2 & -1 &1 \\
		    0 & -1 & 1 & 2 & 1 & -1 & -2 \\
		    -6 & 3 & 0 & 0 & 3 & 0 & 0 \\
		    6 & -3 & 3 & -3 & -3 & 3 & -3\\
		    -6 & 0 & 0 & 3 & 0 & 0 & 3
			\end{pmatrix}.
\end{equation}
Next, consider the values of an analytic function  $u$  at the sample points instead of the finite element solution  $u_h$ . The solution of the normal equation \eqref{equ:normalequation} and the relationship \eqref{equ:rel} allow us to express 
\begin{equation}
\begin{aligned}
 p_{z_0}(x, y)&=u_0+\frac{1}{6 h}\left[2\left(u_1-u_4\right)+u_2-u_3+u_6-u_5\right] x \\
& +\frac{1}{6 h}\left[2\left(u_3-u_6\right)+u_2-u_1+u_4-u_5\right] y+\frac{1}{2 h^2}\left(u_1-2 u_0+u_4\right) x^2 \\
& +\frac{1}{2 h^2}\left(2u_0 + u_2 -  u_1-u_3-u_4+u_5-u_6\right) x y+\frac{1}{2 h^2}\left(u_3-2 u_0+u_6\right) y^2 .
\end{aligned}
\end{equation}
From the definition \eqref{equ:recoveredgrad}, the recovered gradient at  $z_0 = (0, 0)$ is 
\begin{equation}\label{equ:regularstencil}
	\left(G_h u\right)\left(z_0\right)=\frac{1}{6 h}\left(\binom{2}{-1} u_1+\binom{1}{1} u_2+\binom{-1}{2} u_3+\binom{-2}{1} u_4+\binom{-1}{-1} u_5+\binom{1}{-2} u_6\right) . 
\end{equation}
Using Taylor expansion, we can show that
\begin{equation}
G_hu(z_0) = 
\left(
\begin{array}{c}
  u_x(z_0)+\frac{1}{6}(u_{xxx}(z_0)+u_{xxy}(z_0)+u_{xyy}(z_0))h^2 + O(h^3)\\
    u_y(z_0)+\frac{1}{6}(u_{xxy}(z_0)+u_{xyy}(z_0)+u_{yyy}(z_0))h^2 + O(h^3)
  \end{array}
\right),
\label{equ:regulartayor}
\end{equation}
which means \eqref{equ:regularstencil} is a second order finite difference stencil.

\textbf{Example 2}: In this example, we consider a uniform mesh with a chevron pattern. Similar to the previous example, the local numbering of nodal points is shown in Figure \ref{fig:gr_chevron}. The local coordinates are defined as
\begin{equation}
	\boldsymbol{\xi}^T=(0, 1, 0, -1, -1, 0, 1) \quad \text{and} \quad 
	\boldsymbol{\eta}^T = (0, 0, 1, 0, -1, -1, -1).
\end{equation}
Repeating the same procedure as in example 1,  we obtain that 
\begin{equation}\label{equ:chevronsprcasea}
G_hu(z_0) = \frac{1}{12h}
\left(
\begin{array}{c}
  6u_1-6u_3\\
 -2u_0+u_1+6u_2+u_3-u_4-4u_5-u_6
\end{array}
\right).
\end{equation}
Taylor expansion around  $z_0$  reveals that
\begin{equation}\label{equ:chevronsprtaylorcasea}
G_hu(z_0) = 
\left(
\begin{array}{l}
  u_x(z_0)+\frac{1}{6}u_{xxx}(z_0)h^2 + O(h^3)\\
    u_y(z_0)+\frac{1}{12}(2u_{yyy}(z_0)+u_{xxy}(z_0))h^2 + O(h^3)
  \end{array}
\right), 
\end{equation}
which again indicates that PPR provides a second-order approximation of $\nabla u$ at $z_0$.

\begin{remark}
On uniform meshes with a  regular pattern, the recovered gradients obtained via simple (weighted) averaging, superconvergent patch recovery, and polynomial preserving recovery methods are equivalent. They form a second-order finite difference scheme. However, on uniform meshes with a chevron pattern, the gradients obtained through simple (weighted) averaging and SPR exhibit only first-order convergence. This outcome explains the loss of superconvergence in SPR on certain mesh configurations. In other words, SPR is conditionally second-order consistent, whereas PPR is unconditionally second-order consistent.
\end{remark}

From the above two examples, we can see that the gradient recovery operator acts as a second-order finite difference scheme. For general unstructured meshes, we can show that the gradient recovery operator  $G_h$  has the following polynomial preserving property:
\begin{theorem}\label{thm:pp}
	The gradient recovery operator $G_h$ preserves polynomials of degree $k + 1$ for an arbitrary mesh. Furthermore, if $z_i$ is a center of symmetry for the involved nodal points and $k = 2r$, then $G_h$ preserves polynomials of degree up to $k + 2$ at $z_i$.
\end{theorem}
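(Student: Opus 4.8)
The plan rests on two elementary features of PPR. First, the local fit \eqref{equ:localls} is a least-squares \emph{projection} onto $\mathbb{P}_{k+1}$, hence reproduces every polynomial of degree at most $k+1$. Second, the recovered value at an edge or interior node is a convex combination (weights summing to one) of vertex contributions, so the reproduction property propagates to all of $\mathcal N_h$. For the first assertion I would fix $u\in\mathbb{P}_{k+1}$ and show $(G_hu_I)(z_i)=\nabla u(z_i)$ for every $z_i\in\mathcal N_h$, where $u_I\in S_h$ is the interpolant. At a vertex $z_i$ the data in \eqref{equ:localls} are the exact nodal values $\{u(z_{i_j})\}$, so $p=u$ annihilates the residual; by the uniqueness built into the patch construction, $p_{z_i}=u$, and \eqref{equ:recoveredgrad} then gives $(G_hu_I)(z_i)=\nabla u(z_i)$. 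For an edge node, each patch polynomial in \eqref{equ:edgegr} equals $u$, so every term there equals $\nabla u(z_i)$ and so does their convex combination; the interior-node case \eqref{equ:elemgr} is identical since $\sum_j\beta_j=1$, and the boundary treatment of \cite{guo2016pprboundary}, being assembled from interior-vertex patches, works the same way. Since $\nabla u$ is a global polynomial of degree $k$, it lies in $S_h\times S_h$ and coincides with its own interpolant, whence $G_hu_I=\nabla u$.

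For the second assertion, let $z_i$ be a vertex whose sampling set $B_{z_i}$ is symmetric about $z_i$, assume $k=2r$, and take $u\in\mathbb{P}_{k+2}$. In the local coordinates of \eqref{equ:coordtran} centered at $z_i$, split $u=v+w$ with $v\in\mathbb{P}_{k+1}$ and $w$ homogeneous of degree $k+2$. By linearity of interpolation and of $G_h$, $(G_hu_I)(z_i)=(G_hv_I)(z_i)+(G_hw_I)(z_i)$; the first term equals $\nabla v(z_i)$ by the first assertion, and $\nabla w(z_i)=0$ because a homogeneous polynomial of degree $\ge 2$ has vanishing gradient at the origin. Hence it suffices to prove $(G_hw_I)(z_i)=0$.

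This is the core of the argument and the place where the parity hypothesis is used. Because $k+2$ is even, $w(-\hat z)=w(\hat z)$; because $B_{z_i}$ is symmetric, the local images of its nodes occur in pairs $\{\pm\hat z_{i_j}\}$. Writing an arbitrary $p\in\mathbb{P}_{k+1}$ as $p=p_e+p_o$ (even plus odd part) and pairing terms in the least-squares functional yields the identity
\begin{equation*}
\sum_{j}|p(\hat z_{i_j})-w(\hat z_{i_j})|^2=\sum_{j}|p_e(\hat z_{i_j})-w(\hat z_{i_j})|^2+\sum_{j}|p_o(\hat z_{i_j})|^2 ,
\end{equation*}
so the functional at $p$ is never below its value at the even part $p_e$. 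Applying this to the unique least-squares minimizer $p^w_{z_i}$ forces $p^w_{z_i}$ to equal its own even part, i.e. to be even; an even polynomial has zero gradient at the origin, so \eqref{equ:recoveredgrad} gives $(G_hw_I)(z_i)=0$, and therefore $(G_hu_I)(z_i)=\nabla v(z_i)=\nabla u(z_i)$.

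I expect the only delicate point to be the uniqueness bookkeeping in the final step: one must conclude that the \emph{distinguished} least-squares solution singled out by the construction, not merely some minimizer, is the even one — which is precisely where the uniqueness guaranteed by the patch construction is invoked (together with the fact that a polynomial in $\mathbb{P}_{k+1}$ vanishing at all sampling points must be zero). It is also worth recording explicitly why $k=2r$ cannot be dropped: were $k$ odd, $w$ would be an \emph{odd} function, its best $\mathbb{P}_{k+1}$-fit could carry a genuine linear term, and its gradient at $z_i$ would generally fail to vanish, so no order beyond $k+1$ would be gained.
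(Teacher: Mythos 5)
Your proof is correct. For the first assertion you take essentially the same route as the paper: the exact polynomial $u$ annihilates the least-squares residual in \eqref{equ:localls}, uniqueness of the fit forces $p_{z_i}=u$, and the edge/interior/boundary cases follow because \eqref{equ:edgegr} and \eqref{equ:elemgr} are convex combinations of vertex fits (the paper phrases the vertex step basis-element by basis-element, showing $\mathbf{e}_j$ solves $\mathbf{A}\mathbf{a}=\mathbf{b}$ for $u=q_j$, but the content is identical). The genuine difference is in the second assertion: the paper does not prove it at all and simply cites Theorem 2.1 of the PPR reference, whereas you supply a self-contained argument --- split $u=v+w$ with $w$ the homogeneous degree-$(k+2)$ part about $z_i$, use evenness of $w$ (here $k=2r$ enters) and symmetry of the sampling set to show the least-squares functional is minimized by an even polynomial, and conclude via uniqueness that the fitted polynomial is even and hence has vanishing gradient at $z_i$. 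Your bookkeeping at the delicate step is right: the even part $p_e$ of any competitor is itself an admissible competitor with a no-larger functional value, so the unique minimizer coincides with its own even part; and your closing remark correctly identifies why the parity hypothesis cannot be dropped. This makes your write-up more complete than the paper's, at the cost of being restricted to the symmetric-vertex case stated in the theorem (which is all that is claimed).
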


\begin{proof} (i). 
First, we consider the case where $z_i$ is a vertex. From the definition \eqref{equ:recoveredgrad}, it suffices to show that
	\begin{equation}\label{equ:ppidentity}
		\nabla p_{z_i}(z_i) = \nabla u(z_i),
	\end{equation}
	if $u$ is a polynomial of degree $k+1$. For simplicity, we consider the least-squares procedure on the computational domain $\Omega_{z_i}$ instead of the reference domain $\hat{\Omega}_{z_i}$. 
	Let $\{q_1(z), q_2(z), \ldots, q_{n_{k+1}}(z)  \}$ be the monomial basis functions for  $\mathbb{P}_{k+1}$ and 
	$$\mathbf{p}^T = (q_1(z), q_2(z), \ldots, q_{n_{k+1}}(z)).$$ The polynomial can be represented as 
	\begin{equation}
		\mathbf{p}_{z_i} = \mathbf{p}^T\mathbf{a},
	\end{equation}
	where the coefficients $\mathbf{a}$ are determined by the normal equation \eqref{equ:normalequation}. To establish the polynomial preserving property, it is sufficient to show that \eqref{equ:ppidentity} holds for all $q_j(z)$, $j = 1, \ldots, n_{k+1}$. For this purpose, let $u = q_j(z)$, which implies that
	\begin{equation}
		\mathbf{b}^T = \left( q_j(z_{i_0}), q_j(z_{i_1}), \ldots, q_j(z_{i_{|I_i|-1}})\right).
	\end{equation}
	Let $\mathbf{e}_j$ be the $j$th canonical basis of $\mathbb{R}^{n_{k+1}}$. It is straightforward to see that $\mathbf{e}_j$ is a solution to $\mathbf{A} \mathbf{a} = \mathbf{b}$. By the uniqueness of the least-squares problem, $\mathbf{e}_j$ must be the unique solution. Consequently, we have
\begin{equation*}
	p_{z_i} = \mathbf{p}^T\mathbf{e}_j = q_j,
\end{equation*} 
and hence we have
\begin{equation*}
	\nabla p_{z_i}(z_i) = \nabla u(z_i).
\end{equation*}
Thus, $G_h$ preserves polynomials of degree $k + 1$.
Second, if $z_i$ is an edge nodal point located on an edge formed by $z_{i_1}$ and $z_{i_2}$, the above proof shows that $p_{z_{i_1}}(z) = p_{z_{i_2}}(z) = u(z)$ if $u$ is a polynomial of degree $k+1$. By \eqref{equ:edgegr}, we have
\begin{equation*}
	\nabla p_{z_i}(z_i) = \nabla u(z_i).
\end{equation*}
Third, when $z_i$ is an interior nodal point, the same argument shows that $G_h$ preserves polynomials of degree $k + 1$. 
(ii). The proof can be found in \cite[Theorem 2.1]{zhang2005ppr}.
\end{proof}

Using the polynomial preserving property of $G_h$, we can establish the following unconditionally consistent result of $G_h$.
\begin{theorem}\label{thm:consistency}
	Suppose $u\in W^{k+2}_{\infty}(\Omega_{z_i})$, then we have 
	\begin{equation}
	\|\nabla u-G_hu\|_{0, \infty, \Omega_{z_i}} \lesssim h^{k+1}|u|_{k+2, \infty, \Omega_{z_i}}.
	\end{equation}
Furthermore, if $z_i$ is a center of symmetry for the involved nodal points and $u\in W^{k+3}_{\infty}(\Omega_{z_i})$ with  $k = 2r$, then we have 
\begin{equation}
	|(\nabla u-G_hu)(z_i)| \lesssim h^{k+2}|u|_{k+3, \infty, \Omega_{z_i}}.
	\end{equation}
\end{theorem}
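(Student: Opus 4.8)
The plan is to combine the linearity of $G_h$ with the polynomial preserving property of Theorem~\ref{thm:pp} and a boundedness (stability) estimate for $G_h$. For a smooth $u$, here $G_h u$ is understood as $G_h u_I$, i.e.\ the recovery procedure applied to the nodal values of $u$, exactly as in Example~1. First I would reduce the $L^\infty$ estimate on $\Omega_{z_i}$ to a pointwise estimate at the nodes: writing $G_h u = \sum_{z_j}(G_h u)(z_j)\phi_j$ and $(\nabla u)_I = \sum_{z_j}\nabla u(z_j)\phi_j$, using $\big\|\sum_j c_j\phi_j\big\|_{0,\infty}\lesssim \max_j|c_j|$ for the Lagrange basis and the standard interpolation bound $\|\nabla u-(\nabla u)_I\|_{0,\infty,\Omega_{z_i}}\lesssim h^{k+1}|u|_{k+2,\infty,\Omega_{z_i}}$ for the degree-$k$ space $S_h$, the first claim follows once one shows
\begin{equation*}
|(G_h u)(z_j)-\nabla u(z_j)|\lesssim h^{k+1}|u|_{k+2,\infty,\Omega_{z_j}}
\end{equation*}
for every node $z_j$ whose patch meets $\Omega_{z_i}$ (up to enlarging $\Omega_{z_i}$ by a bounded number of element layers, which is harmless under shape-regularity).

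For this nodal estimate, let $p\in\mathbb{P}_{k+1}$ be the degree-$(k+1)$ Taylor polynomial of $u$ at $z_j$. By Theorem~\ref{thm:pp}, $(G_h p)(z_j)=\nabla p(z_j)=\nabla u(z_j)$, so linearity gives
\begin{equation*}
(G_h u)(z_j)-\nabla u(z_j)=(G_h(u-p))(z_j)-\nabla(u-p)(z_j)=(G_h(u-p))(z_j),
\end{equation*}
since $\nabla(u-p)(z_j)=0$. It then remains to establish the $\mathcal{O}(h^{-1})$ stability bound $|(G_h v)(z_j)|\lesssim h^{-1}\|v\|_{0,\infty,\Omega_{z_j}}$; applying it to $v=u-p$ and using the Taylor remainder bound $\|u-p\|_{0,\infty,\Omega_{z_j}}\lesssim h^{k+2}|u|_{k+2,\infty,\Omega_{z_j}}$ (valid since $|z-z_j|\lesssim h$ on the patch) yields $|(G_h(u-p))(z_j)|\lesssim h^{k+1}|u|_{k+2,\infty,\Omega_{z_j}}$.

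The stability bound is the technical heart of the argument. For a vertex $z_j$ one has $(G_h v)(z_j)=h_{z_j}^{-1}(\hat a_2,\hat a_3)^{T}$ with $\hat{\mathbf a}=(\hat{\mathbf A}^{T}\hat{\mathbf A})^{-1}\hat{\mathbf A}^{T}\hat{\mathbf b}$ and $\hat{\mathbf b}$ the vector of nodal values of $v$; the matrix $\hat{\mathbf A}$ depends only on the scaled coordinates $(z_{i_\ell}-z_j)/h_{z_j}$, which stay in a bounded region. Shape-regularity together with the geometric conditions guaranteeing solvability of the least-squares problem \eqref{equ:localls} (cf.\ \cite{naga2004pprposteriori}) imply that $\hat{\mathbf A}^{T}\hat{\mathbf A}$ is uniformly well conditioned, so $\|(\hat{\mathbf A}^{T}\hat{\mathbf A})^{-1}\hat{\mathbf A}^{T}\|_\infty\lesssim 1$, hence $|\hat a_2|,|\hat a_3|\lesssim \|v\|_{0,\infty,\Omega_{z_j}}$, and the claim follows from $h_{z_j}\sim h$. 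For an edge node or an interior node, $(G_h v)(z_j)$ is a convex combination of the gradients at $z_j$ of the polynomials fitted at the neighbouring vertices; each term obeys the same bound (after an inverse estimate absorbing $|z_j-z_{i_\ell}|\lesssim h$), and taking a convex combination preserves it.

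The second estimate is proved verbatim with Theorem~\ref{thm:pp}(ii) in place of the basic preservation property: when $z_i$ is a center of symmetry and $k=2r$, take $p\in\mathbb{P}_{k+2}$ to be the degree-$(k+2)$ Taylor polynomial of $u$ at $z_i$, so that $(G_h p)(z_i)=\nabla p(z_i)=\nabla u(z_i)$, and combine the stability bound with $\|u-p\|_{0,\infty,\Omega_{z_i}}\lesssim h^{k+3}|u|_{k+3,\infty,\Omega_{z_i}}$ to obtain $|(\nabla u-G_h u)(z_i)|=|(G_h(u-p))(z_i)|\lesssim h^{k+2}|u|_{k+3,\infty,\Omega_{z_i}}$. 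The one delicate point I anticipate is making the uniform conditioning of the scaled least-squares matrix fully rigorous on general shape-regular meshes; this is a known subtlety, and in a survey it is reasonable to invoke the solvability/stability hypotheses that already underlie the definition of the patch $\Omega_{z_i}$.
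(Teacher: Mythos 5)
Your proposal is correct and follows essentially the same route as the paper: the paper's proof is a single sentence invoking Theorem~\ref{thm:pp} together with the Bramble--Hilbert lemma, and your Taylor-polynomial argument is exactly the standard constructive instantiation of that one-liner, with the explicit $\mathcal{O}(h^{-1})$ stability bound for the scaled least-squares fit supplying the boundedness hypothesis that the Bramble--Hilbert argument needs implicitly. The only added value in your write-up is that you make this stability step (and the mild enlargement of $\Omega_{z_i}$ needed for nodes near its boundary) explicit, whereas the paper leaves both to the reader.
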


\begin{proof}
This conclusion is a direct consequence of Theorem \ref{thm:pp} and the Bramble-Hilbert lemma \cite{brenner2008fembook, ciarlet2002fembook}.
\end{proof}

For the gradient recovery operator $G_h$, we can also demonstrate the boundedness property as follows:
\begin{theorem}[\cite{naga2004pprposteriori}]\label{thm:boundedness}
	For $K \in\mathcal{T}_h$, let $\Omega_{K} = \bigcup\limits_{z_j\in K \cap\mathcal{N}_h}\Omega_{z_j}$. Then, we have 
	\begin{equation}
		\|G_hv_h\|_{0, K} \lesssim \|\nabla v_h\|_{0, \Omega_{K}}. 
	\end{equation}
\end{theorem}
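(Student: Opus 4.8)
\textbf{Proof proposal for Theorem \ref{thm:boundedness} (boundedness of $G_h$).}

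The plan is to work locally, vertex by vertex, and exploit the finite-dimensional nature of the local least-squares problem together with scaling. Fix $K \in \mathcal{T}_h$. Since $G_h v_h$ is a finite element function in $S_h \times S_h$, its restriction to $K$ is a polynomial of degree $k$ determined entirely by its nodal values $(G_h v_h)(z_j)$ for $z_j \in K \cap \mathcal{N}_h$. By the standard scaling argument on the reference element, $\|G_h v_h\|_{0,K}^2 \lesssim |K| \sum_{z_j \in K \cap \mathcal{N}_h} |(G_h v_h)(z_j)|^2$, so it suffices to bound each $|(G_h v_h)(z_j)|$ by a local norm of $\nabla v_h$. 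Because the nodal value of $G_h v_h$ at an edge node or interior node is a convex combination (via barycentric weights $\beta, \beta_j \in [0,1]$) of values $p_{z_{i_\ell}}(z_j)$ coming from nearby vertex patches, as in \eqref{equ:edgegr} and \eqref{equ:elemgr}, it is enough to prove the estimate $|\nabla p_{z_i}(z_i)| \lesssim h^{-1}\,\|v_h\|_{0,\infty,\Omega_{z_i}}$ for each vertex $z_i$, where $p_{z_i}$ is the local least-squares polynomial fitting $v_h$ at the sampling nodes in $B_{z_i}$.

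To prove this pointwise gradient bound, I would pass to the local reference coordinates $\hat z = F(z)$ defined in \eqref{equ:coordtran}, so that $\Omega_{z_i}$ is mapped to a fixed-size reference patch $\hat\Omega_{z_i}$ of diameter $O(1)$ with sampling points in a compact region. In these coordinates the least-squares polynomial $\hat p_{z_i}$ is the orthogonal projection (in the discrete $\ell^2$ inner product over the sampling nodes) of the data vector $\hat{\mathbf b} = (v_h(z_{i_j}))_j$ onto $\mathbb{P}_{k+1}$; its coefficient vector satisfies $\hat{\mathbf a} = (\hat{\mathbf A}^T\hat{\mathbf A})^{-1}\hat{\mathbf A}^T \hat{\mathbf b}$, and $\nabla_{\hat z}\hat p_{z_i}(\hat z_i) = (\hat a_2, \hat a_3)^T$. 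Since shape-regularity of $\mathcal{T}_h$ together with Condition-type uniformity forces the number of sampling nodes, the geometry of the patch, and hence the matrix $\hat{\mathbf A}^T\hat{\mathbf A}$ to range over a compact set of uniformly invertible matrices, the operator $\hat{\mathbf b}\mapsto (\hat a_2,\hat a_3)$ has norm bounded by a constant independent of $h$. Therefore $|(\hat a_2,\hat a_3)| \lesssim \|\hat{\mathbf b}\|_{\ell^\infty} = \|v_h\|_{0,\infty,\Omega_{z_i}}$. Undoing the scaling \eqref{equ:recoveredgrad} contributes the factor $h_{z_i}^{-1} \simeq h^{-1}$, giving $|(G_h v_h)(z_i)| \lesssim h^{-1}\|v_h\|_{0,\infty,\Omega_{z_i}}$.

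The final step is to convert the $L^\infty$ bound on $v_h$ into the desired $L^2$ bound on $\nabla v_h$. Here I would use the polynomial preserving property (Theorem \ref{thm:pp}): since $G_h$ reproduces constants, $(G_h v_h)(z_i) = (G_h(v_h - c))(z_i)$ for any constant $c$, so we may replace $v_h$ by $v_h - \bar v_h$ with $\bar v_h$ a suitable local average. Then the inverse inequality on the shape-regular patch gives $\|v_h - \bar v_h\|_{0,\infty,\Omega_{z_i}} \lesssim h^{-1}\|v_h - \bar v_h\|_{0,\Omega_{z_i}}$... wait, this would produce the wrong power; instead the cleaner route is a Poincar\'e-type inequality on the connected patch, $\|v_h - \bar v_h\|_{0,\infty,\Omega_{z_i}} \lesssim h \cdot h^{-1}\|\nabla v_h\|_{0,\Omega_{z_i}}$ via the chain inverse estimate $\|w\|_{0,\infty,\Omega_{z_i}} \lesssim h^{-1}\|w\|_{0,\Omega_{z_i}}$ for $w\in S_h$ combined with Poincar\'e $\|v_h-\bar v_h\|_{0,\Omega_{z_i}}\lesssim h\|\nabla v_h\|_{0,\Omega_{z_i}}$. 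Combining, $|(G_h v_h)(z_i)| \lesssim h^{-1}\cdot h^{-1}\cdot h\,\|\nabla v_h\|_{0,\Omega_{z_i}} = h^{-1}\|\nabla v_h\|_{0,\Omega_{z_i}}$, and then $\|G_h v_h\|_{0,K}^2 \lesssim |K|\, h^{-2}\|\nabla v_h\|_{0,\Omega_K}^2 \lesssim \|\nabla v_h\|_{0,\Omega_K}^2$ since $|K|\simeq h^2$. Summing (or rather, this is already for a single $K$) yields the claim.

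The main obstacle is making the constant in the least-squares step $\hat{\mathbf b}\mapsto(\hat a_2,\hat a_3)$ genuinely uniform in $h$: this requires that, after rescaling, the sampling-point configurations in all local patches lie in a compact family for which $\hat{\mathbf A}$ has full column rank with a uniform lower bound on its smallest singular value. This is where shape-regularity (and the bounded-valence structure of the layered patches $\mathcal{L}(z_i,n_i)$) must be invoked carefully — it guarantees finitely many combinatorial patch types up to uniformly bi-Lipschitz perturbation, hence uniform conditioning. The boundary-vertex case needs the enlarged patch construction of \cite{guo2016pprboundary}, but the same compactness reasoning applies once one notes the merged patch still has $O(1)$ reference diameter and uniformly many nodes.
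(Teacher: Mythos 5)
The paper does not actually prove this theorem --- it is quoted from \cite{naga2004pprposteriori} --- but your argument is essentially the standard proof given there: reduce $\|G_hv_h\|_{0,K}$ to nodal values by scaling, bound the local least-squares map uniformly after rescaling to the reference patch, and convert the resulting $L^\infty$ bound into the $H^1$ bound by exploiting that $G_h$ annihilates constants together with a scaled Poincar\'e inequality and an inverse estimate. The bookkeeping $h^{-1}\cdot h^{-1}\cdot h\cdot|K|^{1/2}\simeq 1$ is correct, and you rightly flag the only delicate point, the $h$-uniform lower bound on the smallest singular value of $\hat{\mathbf{A}}$, which is precisely the geometric condition on the sampling points discussed in \cite{naga2004pprposteriori}.
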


\subsection{Hessian recovery technique}\label{ssec:hr}
Let  $G_h$  denote the gradient recovery operator as defined in the preceding subsection. For any function $u_h\in S_h$, the recovered gradient  $G_h u_h$  is expressed in component form as:
\begin{equation}
	G_hu_h=
	\begin{pmatrix}
	 G^x_hu_h\\
	 G^y_hu_h 
\end{pmatrix}.
\end{equation}
The key idea of Hessian recovery is to apply the gradient recovery operator $G_h$ to $G_h^xu_h$ and $G_h^yu_h$. 
Thus, the recovered Hessian matrix of  $u_h$  is given by: 
\begin{equation}\label{equ:hessianrecovery}
	H_h u_h :=
	\begin{pmatrix}
		\left(G_h\left(G_h^x u_h\right)\right)^T \\ \left(G_h\left(G_h^y u_h\right)\right)^T
		\end{pmatrix}
	=
	\begin{pmatrix}
		G_h^x\left(G_h^x u_h\right) & G_h^x\left(G_h^y u_h\right) \\
G_h^y\left(G_h^x u_h\right) & G_h^y\left(G_h^y u_h\right)
	\end{pmatrix}. 
\end{equation}

Let us represent $u_h = \mathbf{u}^{T}\Phi$ as in \eqref{equ:vecform} and express the recovered Hessian matrix as 
\begin{equation}
	H_h u_h :=
	\begin{pmatrix}
		H_h^{xx}u_h & H_h^{xy}u_h \\
H_h^{yx}u_h & H_h^{yy}u_h
\end{pmatrix}
= 
\begin{pmatrix}
		 \mathbf{u}_{xx}^{T}\Phi&  \mathbf{u}_{xy}^{T}\Phi \\
\mathbf{u}_{yx}^{T}\Phi&  \mathbf{u}_{yy}^{T}\Phi
\end{pmatrix},
\end{equation}
where $\mathbf{u}_{xx}$, $\mathbf{u}_{xy}$, $\mathbf{u}_{yx}$, and $\mathbf{u}_{yy}$ are the vectors containing the recovered Hessian matrix values at the nodal points. Similar to gradient recovery case, we have the following relationship:
\begin{equation}
	\mathbf{u}_{xx} =  \mbox{B}_h^x \mbox{B}_h^x \mathbf{u} , \quad \mathbf{u}_{xy} =  \mbox{B}_h^x \mbox{B}_h^y \mathbf{u}, \quad 
	\mathbf{u}_{yx} =  \mbox{B}_h^y \mbox{B}_h^x \mathbf{u} , \quad \mathbf{u}_{yy} =  \mbox{B}_h^y \mbox{B}_h^y \mathbf{u}.
\end{equation}
It is basically just additional sparse matrix-vector multiplication, which can also be done in $\mathcal{O}(|\mathcal{N}_h|)$ operations. We can again see that the differentiation matrices $\mbox{B}_h^x$ and $\mbox{B}_h^y$ play the role of differentiation matrices in spectral methods \cite{shen2011spectralbook}. The second-order differentiation matrices are simply products of the first-order differentiation matrices.

\begin{figure}[!h]
   \centering
   {\includegraphics[width=0.451\textwidth]{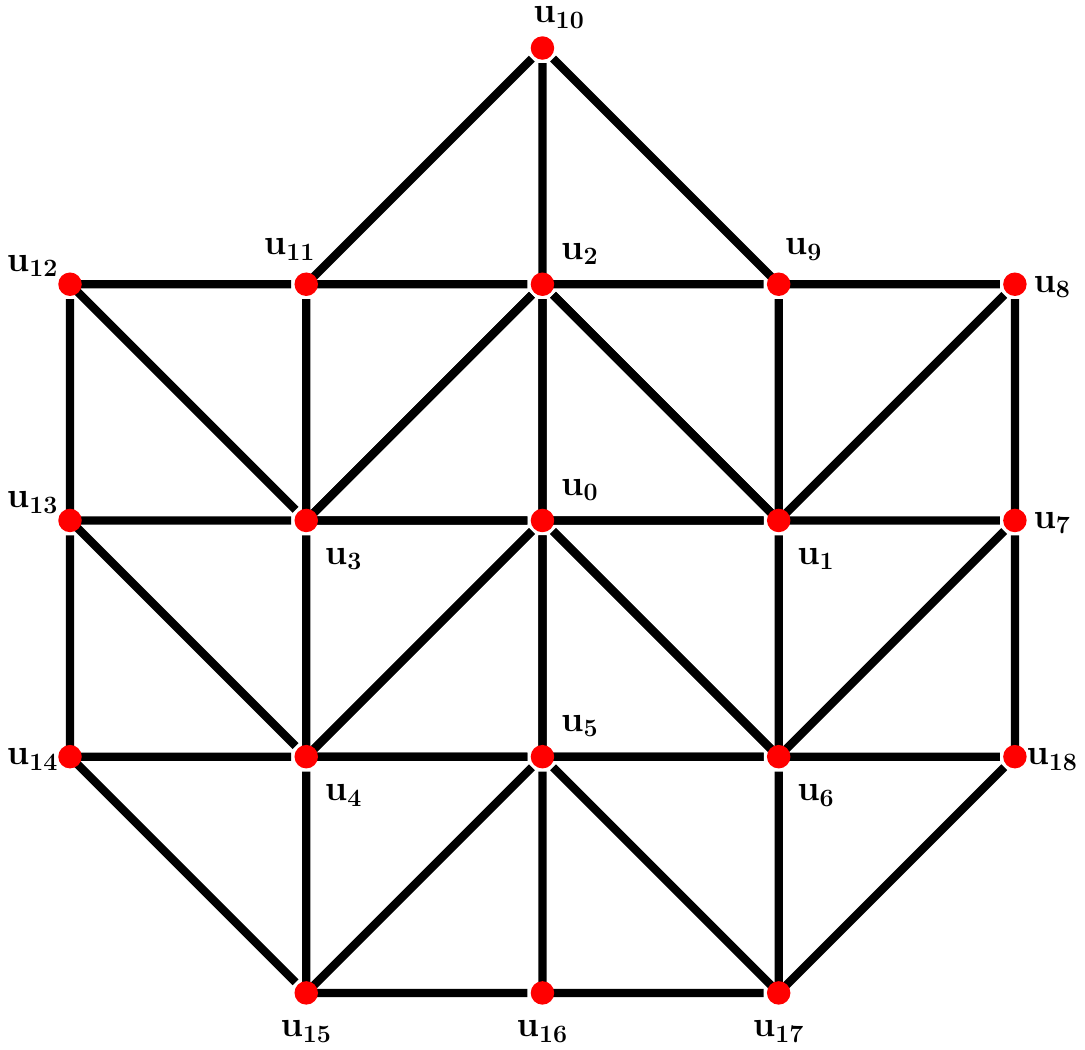}}
   \caption{Hessian recovery on uniform meshes with a chevron pattern.}\label{fig:hr}
\end{figure}

To demonstrate the idea, we consider the Hessian recovery on uniform meshes of chevron pattern using linear element.

\textbf{Example 3}: Consider the uniform mesh of chevron pattern as in Figure \ref{fig:hr}. From example 2, we know 
\begin{equation}\label{equ:ch}
G_hu(z_0) = \frac{1}{12h}
\left(
\begin{array}{c}
  6u_1-6u_3\\
 -2u_0+u_1+6u_2+u_3-u_4-4u_5-u_6
\end{array}
\right).
\end{equation}
By the definition of Hessian recovery \eqref{equ:hessianrecovery},  we have 
\begin{equation}\label{equ:chevron_hessian1}
	\begin{pmatrix}
		H_h^{xx}u \\
		H_h^{xy}u
	\end{pmatrix}(z_0)
	= 
	\frac{1}{12h}
	 \left( 6(G_hu)(z_1) - 6(G_hu)(z_3) \right),
\end{equation}
and 
\begin{equation}\label{equ:chevron_hessian2}
\begin{aligned}
\begin{pmatrix}
		H_h^{yx}u \\
		H_h^{yy}u
	\end{pmatrix}(z_0)
	= 
	\frac{1}{12h}
	 (&  -2(G_hu)(z_0) + (G_hu)(z_1) + 6(G_hu)(z_2) + (G_hu)(z_3)   \\
	 &-(G_hu)(z_4)-4(G_hu)(z_5)-(G_hu)(z_6) ) .
\end{aligned}
\end{equation}
Repeating a similar procedure as in Example 2, we obtain
\begin{equation*}
	\begin{aligned}
		& G_hu(z_1) = \frac{1}{12h}
\left(
\begin{array}{c}
  -6u_0+6u_7\\
 -u_0+2u_1+u_2-6u_6-u_7+u_8+4u_9
\end{array}
\right),\\
		& G_hu(z_2) = \frac{1}{12h}
\left(
\begin{array}{c}
  6u_9-6u_{11}\\
 -4u_0-u_1-2u_2-u_3+u_9+6u_{10}+u_{11}
\end{array}
\right),\\
		& G_hu(z_3) = \frac{1}{12h}
\left(
\begin{array}{c}
  6u_0-6u_{13}\\
 -u_0+u_2+2u_3-6u_4+4u_{11}+u_{12}-u_{13}
\end{array}
\right),\\
		& G_hu(z_4) = \frac{1}{12h}
\left(
\begin{array}{c}
  6u_5-6u_{14}\\
 u_0+4u_3+2u_4-u_5+u_{13}-u_{14}-6u_{15}
\end{array}
\right),\\
		& G_hu(z_5) = \frac{1}{12h}
\left(
\begin{array}{c}
  -6u_4+6u_6\\
 6u_0+u_4-2u_5+u_6-u_{15}-4u_{16}-u_{17}
\end{array}
\right),\\
		& G_hu(z_6) = \frac{1}{12h}
\left(
\begin{array}{c}
  -6u_5+6u_{18}\\
 u_0+4u_1-u_5+2u_6+u_7-6u_{17}-u_{18}
\end{array}
\right).\\
	\end{aligned}
\end{equation*}
Plugging these formulations of $(G_h u)(z_i)$ into \eqref{equ:chevron_hessian1} and \eqref{equ:chevron_hessian2}, we obtain
\begin{equation}
	\begin{aligned}
\left(H_h^{x x} u\right)\left(z_0\right)=&\frac{1}{144 h^2}\left(-72 u_0+36 u_{13}+36 u_7\right), \\
 \left(H_h^{x y} u\right)\left(z_0\right)=&\frac{1}{144 h^2}\left(-12 u_1+12 u_3+24 u_4-24 u_6+6 u_7\right. \\
& \left.+36 u_9-36 u_{11}-6 u_{13}+6 u_{14}-6 u_{18}\right), \\
 \left(H_h^{y x} u\right)\left(z_0\right)=&\frac{1}{144 h^2}\left(12 u_1-12 u_3+36 u_4-36 u_6-6 u_7\right. \\
& \left.+6 u_8+24 u_9-24 u_{11}-6 u_{12}+6 u_{13}\right), \\
 \left(H_h^{y y} u\right)\left(z_0\right)=&\frac{1}{144 h^2}\left(-48 u_0-10 u_1-22 u_2-10 u_3-10 u_4+18 u_5\right. \\
& -10 u_6-2 u_7+u_8+10 u_9+36 u_{10}+10 u_{11}+u_{12} \\
& \left.-2 u_{13}+u_{14}+10 u_{15}+16 u_{16}+10 u_{17}+u_{18}\right).
\end{aligned}
\end{equation}
Expanding using Taylor series, we can get 
\begin{equation*}
	\begin{aligned}
& \left(H_h^{x x} u\right)\left(z_0\right)=u_{x x}\left(z_0\right)+\frac{h^2}{3} u_{x x x x}\left(z_0\right)+\frac{2 h^4}{45} u_{x x x x x x}\left(z_0\right)+O\left(h^5\right), \\
& \left(H_h^{x y} u\right)\left(z_0\right)=u_{x y}\left(z_0\right)+\frac{h^2}{12}\left(3 u_{x x x y}\left(z_0\right)+2 u_{x y y y}\left(z_0\right)\right)-\frac{h^3}{24} u_{x x x y y}\left(z_0\right)+O\left(h^4\right), \\
& \left(H_h^{y x} u\right)\left(z_0\right)=u_{y x}\left(z_0\right)+\frac{h^2}{12}\left(3 u_{x x x y}\left(z_0\right)+2 u_{x y y y}\left(z_0\right)\right)+\frac{h^3}{24} u_{x x x y y}\left(z_0\right)+O\left(h^4\right), \\
& \left(H_h^{y y} u\right)\left(z_0\right)=u_{y y}\left(z_0\right)+\frac{h^2}{6}\left(u_{x x y y}\left(z_0\right)+2 u_{y y y y}\left(z_0\right)\right)-\frac{5 h^3}{72} u_{x x y y y}\left(z_0\right)+O\left(h^4\right) .
\end{aligned}
\end{equation*}
Just as in the gradient recovery case, we can see that the Hessian recovery operator $H_h$ is also a second-order finite difference operator, though $H_h^{xy} \neq H_h^{yx}$.

\begin{remark}
As we observed in the uniform mesh of chevron pattern case, the Hessian recovery operator is not generally symmetric. However, we can obtain a symmetric Hessian recovery operator by symmetrizing, i.e.,
\begin{equation*}
	H_h \leftarrow \frac{H_h + H_h^T}{2}.
\end{equation*}
This symmetrization process is straightforward to implement in practice and maintains the accuracy of the approximation.
\end{remark}

\begin{remark}
	In the literature, various other Hessian recovery methods have been proposed, such as double linear fitting, simple linear fitting, and double $L^2$ projection \cite{agouzal2022hessianrecovery}. For numerical comparisons of these methods, see \cite{picasso2003anisotropic, vallet2007hessianrecovery}. However, as reported in \cite{picasso2003anisotropic}, these methods fail to achieve convergence even on some uniform meshes.  
It is also worth noting that a Hessian recovery approach based on local least-squares fitting followed by second-order differentiation has been applied to solve the Cahn-Hilliard equation \cite{xu2004analysisofrecovery}. In particular, the Hessian recovery method reduces to the standard five-point stencil on a uniform mesh of  regular pattern.
\end{remark}

On general unstructured meshes, we can show the Hessian recovery operator $H_h$ satisfies the following polynomial preserving property.
\begin{theorem}\label{thm:hrpp}
The Hessian recovery operator $H_h$ preserves polynomials of degree $k + 1$ on arbitrary unstructured meshes.
\end{theorem}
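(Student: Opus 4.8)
The plan is to deduce Theorem~\ref{thm:hrpp} from two successive applications of Theorem~\ref{thm:pp}, the polynomial preserving property of $G_h$. Fix a polynomial $u\in\mathbb{P}_{k+1}$ and a nodal point $z_i\in\mathcal{N}_h$; we must show $H_hu(z_i)=Hu(z_i)$. The first --- and really the only delicate --- step is to upgrade Theorem~\ref{thm:pp} from the pointwise statement $(G_hu)(z_i)=\nabla u(z_i)$ for all $z_i\in\mathcal{N}_h$ to the functional identity $G_hu=\nabla u$ on every element. Indeed, on a fixed $K\in\mathcal{T}_h$ the recovered gradient restricted to $K$ equals $\sum_{z_j\in K\cap\mathcal{N}_h}(G_hu)(z_j)\,\phi_j|_K$, which by Theorem~\ref{thm:pp} is $\sum_{z_j\in K\cap\mathcal{N}_h}\nabla u(z_j)\,\phi_j|_K$, i.e. the degree-$k$ nodal interpolant on $K$ of $\nabla u|_K$. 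Since $u$ has degree $k+1$, each component of $\nabla u$ is a polynomial of degree at most $k$, so the interpolation is exact and $G_hu|_K=\nabla u|_K$. As $K$ is arbitrary, $G_h^xu=\partial_x u$ and $G_h^yu=\partial_y u$ hold identically, hence on every patch $\Omega_{z_i}$.

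With this in hand the second step is immediate: $G_h^xu$ and $G_h^yu$ are themselves polynomials of degree at most $k$, hence of degree at most $k+1$, so Theorem~\ref{thm:pp} applies to them. Repeating the argument of the first step gives $G_h(G_h^xu)=\nabla(\partial_x u)=(\partial_{xx}u,\ \partial_{xy}u)^T$ and $G_h(G_h^yu)=\nabla(\partial_y u)=(\partial_{yx}u,\ \partial_{yy}u)^T$. Plugging these into the definition~\eqref{equ:hessianrecovery} of $H_h$ and using $\partial_{xy}u=\partial_{yx}u$ for the polynomial $u$, we obtain $H_hu=Hu$, which is the asserted polynomial preserving property.

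The expected main obstacle is exactly the bookkeeping of the first step: one must not treat $G_hu$ as merely ``the function that is correct at the nodes,'' since the second application of $G_h$ has to be fed an honest input, and it is the local reproduction identity $G_hu|_K=\nabla u|_K$ --- valid only because $\deg\nabla u\le k$ and $S_h$ contains $\mathbb{P}_k$ elementwise --- that makes the iteration legitimate. Once this is recognized, the proof uses nothing beyond Theorem~\ref{thm:pp}; in particular no mesh regularity, symmetry, or translation-invariance hypothesis enters, which is why the statement holds on arbitrary unstructured meshes, just as for $G_h$ itself, and in contrast with the degree $k+2$ sharpening, which requires $z_i$ to be a center of symmetry.
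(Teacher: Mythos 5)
Your proof is correct and follows essentially the same route as the paper's own two-line argument: apply Theorem~\ref{thm:pp} to get $G_h u = \nabla u$, observe that $\nabla u$ has degree $k$, and apply $G_h$ again to obtain $H_h u = Hu$. The only difference is that you explicitly justify the upgrade from the nodal identity $(G_hu)(z_i)=\nabla u(z_i)$ to the functional identity $G_hu=\nabla u$ (via exactness of degree-$k$ Lagrange interpolation on $\mathbb{P}_k$), a step the paper's proof takes for granted; this is a worthwhile clarification, not a different method.
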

\begin{proof}
	By Theorem \ref{thm:pp}, $G_h u = \nabla u$ if $u$ is a polynomial of degree $k+1$. Noting that $\nabla u$ is a polynomial of degree $k$, we obtain $G_h(G_h u) = G_h(\nabla u) = D^2 u$. This confirms that the conclusion holds.
\end{proof}

When the underlying mesh is translation invariant as defined in Section \ref{sec:not}, we can obtain an improved polynomial preserving result for $H_h$ \cite{guo2017hessianrecovery}:

\begin{theorem}\label{thm:hripp}
If $z_i$ is a node of a translation invariant mesh and serves as a symmetric center for the involved nodes, then $H_h$ preserves polynomials of degree $k + 2$ for odd $k$ and of degree $k + 3$ for even $k$. Additionally, $H_h$ is symmetric.
\end{theorem}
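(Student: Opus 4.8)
The plan is to exploit the rigidity of a translation invariant mesh to turn each component of $G_h$ into a fixed finite-difference stencil, so that $H_h$ — which by \eqref{equ:hessianrecovery} is $G_h$ applied twice, componentwise — becomes the convolution of two such stencils, and then to read off its polynomial-preserving degree from the exactness and symmetry of the single stencil. Since the mesh is translation invariant, $G_h$ commutes with the defining translations $T^\ell_{\nu\tau}$ in the interior: the patch $\Omega_{z}$ and the least-squares weights at an interior node depend only on the local mesh pattern, which is identical at $z_i$ and at all nodes within its reach. Hence one may write $(G_h^x v)(z)=\tfrac1h\sum_\ell c_\ell^x\, v(z+h w_\ell)$ with $h$-independent offsets $w_\ell$ and weights $c_\ell^x$ (and $c_\ell^y$ sharing the same $w_\ell$, since both come from the single fit $p_{z}$), and a second application gives
\[
 (H_h^{xx}v)(z_i)=\tfrac1{h^2}\sum_{m,\ell}c_m^x c_\ell^x\, v\bigl(z_i+h(w_m+w_\ell)\bigr),
\]
with analogous formulas for $H_h^{xy}, H_h^{yx}, H_h^{yy}$.

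The second step is to record the algebra of the constant-coefficient differential operators $S_a^x:=\sum_\ell c_\ell^x (w_\ell\!\cdot\!\nabla)^a$ and $S_a^y$. Annihilation of constants gives $S_0^x=0$; the normalisation built into the fit gives $S_1^x=\partial_x$; the reproduction of $\mathbb{P}_{k+1}$ by the single stencil (the first assertion of Theorem \ref{thm:pp}) forces $S_a^x=0$ for $2\le a\le k+1$; and, because $z_i$ is a symmetric center, the reflected least-squares problem shows $c_\ell^x$ is odd under $w_\ell\mapsto-w_\ell$, so $S_a^x=0$ for every even $a$. Consequently $S_a^x$ (and $S_a^y$) can be nonzero only for $a=1$ or for odd $a\ge k+2$, and — being constant-coefficient — they all commute.

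Then I would expand $(w_m+w_\ell)\!\cdot\!\nabla$ by the binomial theorem and Taylor-expand $v$ about $z_i$, obtaining $(H_h^{xx}v)(z_i)=\sum_p \tfrac{h^{p-2}}{p!}\sum_{a+b=p}\binom pa S_a^x S_b^x\, v(z_i)$. By the index list above, the only nonzero contributions are the term $p=2$, $a=b=1$, which equals exactly $\partial_{xx}v(z_i)$, and terms in which one factor is $S_1^x=\partial_x$ and the other is $S_b^x$ with $b$ odd and $b\ge k+2$; the smallest such $p$ is $k+3$ for odd $k$ and $k+4$ for even $k$ (two factors of order $\ge k+2$ contribute only at $p\ge 2k+4$). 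Thus $(H_h^{xx}v)(z_i)-\partial_{xx}v(z_i)$ is a combination of derivatives of $v$ of order $\ge k+3$ (resp.\ $\ge k+4$), which vanishes on $\mathbb{P}_{k+2}$ (resp.\ $\mathbb{P}_{k+3}$). The same computation with $S^y$ in the relevant slots treats $H_h^{xy}$ and $H_h^{yx}$; and since the $S_a^x, S_b^y$ commute, relabelling $a\leftrightarrow b$ and using $\binom pa=\binom pb$ shows $(H_h^{xy}v)(z_i)=(H_h^{yx}v)(z_i)$, so $H_h$ is symmetric at $z_i$. Collecting the four entries gives $H_h v(z_i)=D^2 v(z_i)$ for $v\in\mathbb{P}_{k+2}$ when $k$ is odd and for $v\in\mathbb{P}_{k+3}$ when $k$ is even.

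I expect the first step to be the main obstacle: one must justify carefully that translation invariance really makes $G_h$ act through one and the same stencil at $z_i$ and at every neighbouring node $z_i+h w_m$ feeding the second application (so that the double sum is legitimate), and that the symmetric-center hypothesis persists for the enlarged collection of nodes involved in forming $H_h v(z_i)$. For the regular, criss-cross and Union Jack patterns of Figure \ref{fig:timesh} this is immediate from the underlying lattice, but formulating it cleanly in general is where care is required; after that, the index bookkeeping for the $S_a^\bullet$ and the Taylor expansion are routine.
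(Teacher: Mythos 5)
The paper does not actually prove this theorem; it defers entirely to \cite{guo2017hessianrecovery}, so your attempt has to be judged on its own merits. Where your argument applies, the bookkeeping is correct: once $(G_h^xv)(z)=\tfrac1h\sum_\ell c_\ell^x v(z+hw_\ell)$ holds with \emph{one fixed, odd} stencil at $z_i$ and at every node feeding the second application, the operators $S_a^x$ survive only for $a=1$ and odd $a\ge k+2$, and the exponents $k+2$ (odd $k$) and $k+3$ (even $k$) fall out as you say. The genuine gap is precisely the step you flag as the ``main obstacle,'' and it is not a formality that can be deferred. Translation invariance by the admissible vectors does not make the stencil identical at $z_i$ and at its neighbours: on the chevron mesh the $y$-stencils at $z_0$ and at the horizontally adjacent node are mirror images, not equal; and your claim that uniformity is ``immediate'' for the criss-cross and Union Jack patterns of Figure \ref{fig:timesh} is false, since those meshes contain two classes of nodes with genuinely different patches and weights, so the inner coefficients in your double sum depend on $m$ and the factorization $\sum_{m,\ell}c_m^xc_\ell^x(w_m\cdot\nabla)^a(w_\ell\cdot\nabla)^b=S_a^xS_b^x$ breaks down. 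The clearest symptom is your symmetry argument: the relabelling $a\leftrightarrow b$ uses only the uniform-stencil assumption and never the symmetric-centre hypothesis, so if that assumption really followed from translation invariance (as your opening paragraph asserts) you would have proved $H_h^{xy}=H_h^{yx}$ on the chevron mesh, contradicting the paper's Example 3, where the two mixed entries differ at order $h^3$. The unproved assumption is therefore carrying the entire weight of both conclusions.

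The repair is to work with the composed two-level stencil $(H_h^{ab}v)(z_i)=h^{-2}\sum_j d_j^{ab}\,v(z_i+h\omega_j)$, assembled from possibly different inner stencils at the intermediate nodes, and to extract from the hypothesis --- that the whole involved configuration is invariant under reflection through $z_i$ --- the evenness $d^{ab}_{-j}=d^{ab}_{j}$: each $G_h^a$ changes sign under that reflection, so their composition is invariant. Evenness kills every odd-order term in the Taylor expansion of the composed stencil; together with the $\mathbb{P}_{k+1}$-preservation of Theorem \ref{thm:hrpp} this already gives preservation of $\mathbb{P}_{k+2}$ for odd $k$ (the order-$(k+2)$ term is odd and hence vanishes). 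For even $k$ the order-$(k+2)$ term is even and must be killed separately, by invoking the improved preservation of $G_h$ at symmetric centres (Theorem \ref{thm:pp}, part (ii)) at the intermediate nodes, after which parity disposes of order $k+3$. The symmetry of $H_h$ likewise has to be derived from this reflection structure (or verified on the composed stencils pattern by pattern), not from a relabelling that is legitimate only when the stencil is literally uniform.
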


With the polynomial preserving properties of the Hessian recovery operator $H_h$, we can establish its consistency results as follows:

\begin{theorem}\label{thm:hessianconsistency}
	Let $u \in W_{\infty}^{k+2}(\Omega_{z_i})$; then
\begin{equation*}
    \|H u - H_h u\|_{0, \infty, \Omega_{z_i}} \lesssim h^{k} |u|_{k+2, \infty, \Omega_{z_i}}.
\end{equation*}
If $z_i$ is a node of a translation invariant mesh and serves as a symmetric center for the involved nodes, and $u \in W_{\infty}^{k+3}(\Omega_{z_i})$, then
\begin{equation*}
    |(H u - H_h u)(z_i)| \lesssim h^{k+1} |u|_{k+3, \infty, \Omega_{z_i}}.
\end{equation*}
Moreover, if $u \in W_{\infty}^{k+4}(\Omega_{z_i})$ and $k$ is an even number, then
\begin{equation*}
    |(H u - H_h u)(z_i)| \lesssim h^{k+2} |u|_{k+4, \infty, \Omega_{z_i}}.
\end{equation*}
\end{theorem}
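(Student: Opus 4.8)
The plan is to carry over the Bramble--Hilbert argument behind Theorem~\ref{thm:consistency}, now anchored on the polynomial preserving identities for $H_h$ (Theorems~\ref{thm:hrpp} and \ref{thm:hripp}) rather than those for $G_h$. The only genuinely new ingredient is a local stability estimate for $H_h$ carrying the correct $h^{-2}$ scaling, which plays the role that Theorem~\ref{thm:boundedness} played for the gradient. \emph{Step 1 (local stability of $H_h$).} I would first show that, with $\widetilde{\Omega}_{z_i}$ the patch $\Omega_{z_i}$ enlarged by one extra mesh layer (so that the outer application of $G_h$ only reads nodal values of $G_h v$ inside $\widetilde{\Omega}_{z_i}$),
\[
\|H_h v\|_{0,\infty,\Omega_{z_i}} \lesssim h^{-2}\,\|v\|_{0,\infty,\widetilde{\Omega}_{z_i}}\qquad\text{for all }v\in C^0(\widetilde{\Omega}_{z_i}).
\]
Working in the scaled local coordinates \eqref{equ:coordtran}, shape regularity together with the geometric condition making the least--squares problem \eqref{equ:localls} uniquely solvable bounds the condition number of $\hat{\mathbf{A}}^T\hat{\mathbf{A}}$ independently of $h$; hence each recovered first derivative is $h^{-1}$ times an $\mathcal{O}(1)$ linear combination of a bounded number of nodal values, and composing $G_h$ with itself produces the factor $h^{-2}$ while keeping the stencil support finite. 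Equivalently, the rows of $\mathrm{B}_h^x\mathrm{B}_h^x$, $\mathrm{B}_h^x\mathrm{B}_h^y$, $\mathrm{B}_h^y\mathrm{B}_h^x$, $\mathrm{B}_h^y\mathrm{B}_h^y$ have $\mathcal{O}(h^{-2})$ entries and $\mathcal{O}(1)$ nonzeros per row. For a vertex on $\partial\Omega$ one invokes the merged--patch construction of \cite{guo2016pprboundary}, which again involves only a bounded number of interior patches.

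\emph{Step 2 (the global estimate).} Fix $z_i$ and let $p\in\mathbb{P}_{k+1}$ be an averaged Taylor polynomial of $u$ on $\widetilde{\Omega}_{z_i}$, so that $\|u-p\|_{j,\infty,\widetilde{\Omega}_{z_i}}\lesssim h^{k+2-j}|u|_{k+2,\infty,\widetilde{\Omega}_{z_i}}$ for $0\le j\le k+2$ by the Bramble--Hilbert lemma (applied on $\widetilde{\Omega}_{z_i}$, a union of $\mathcal{O}(1)$ shape--regular elements of diameter $\lesssim h$). By Theorem~\ref{thm:hrpp}, $H_h p=Hp$, hence on $\Omega_{z_i}$
\[
Hu-H_hu = H(u-p)-H_h(u-p).
\]
The first term is bounded by $|u-p|_{2,\infty,\Omega_{z_i}}\lesssim h^{k}|u|_{k+2,\infty,\widetilde{\Omega}_{z_i}}$, and the second, by Step 1, by $h^{-2}\|u-p\|_{0,\infty,\widetilde{\Omega}_{z_i}}\lesssim h^{k}|u|_{k+2,\infty,\widetilde{\Omega}_{z_i}}$. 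Since the patches $\Omega_{z_i}$ have bounded overlap, taking the maximum over $z_i\in\mathcal{N}_h$ gives the first inequality.

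\emph{Step 3 (pointwise estimates on translation invariant meshes).} When $z_i$ is a node of a translation invariant mesh and a symmetric center for the involved nodes, Theorem~\ref{thm:hripp} raises the polynomial preserving degree to $k+2$, and to $k+3$ when $k$ is even. Repeating the decomposition of Step 2 at the single point $z_i$, with $p$ now an averaged Taylor polynomial of degree $k+2$ (resp.\ $k+3$), the two contributions become $|u-p|_{2,\infty}\lesssim h^{k+1}|u|_{k+3,\infty}$ and $h^{-2}\|u-p\|_{0,\infty}\lesssim h^{k+1}|u|_{k+3,\infty}$ (resp.\ both $\lesssim h^{k+2}|u|_{k+4,\infty}$), which are exactly the claimed rates. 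The main obstacle is Step~1: establishing the $h^{-2}$ bound \emph{uniformly} in $h$, which hinges on the uniform well-conditioning of the scaled Gram matrix and on careful bookkeeping of the stencil support after composing the recovery operator with itself (and the analogue at boundary vertices). Granting Step~1, Steps 2--3 are routine applications of the Bramble--Hilbert lemma combined with Theorems~\ref{thm:hrpp} and \ref{thm:hripp}.
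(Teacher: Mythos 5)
Your proposal is correct and follows essentially the same route as the paper, whose proof is a one-line appeal to the polynomial preserving properties (Theorems~\ref{thm:hrpp} and \ref{thm:hripp}) combined with the Bramble--Hilbert lemma. You simply make explicit the $h^{-2}$-scaled local boundedness of $H_h$ (the Hessian analogue of Theorem~\ref{thm:boundedness}) that the paper's argument leaves implicit, which is a reasonable and accurate filling-in of the same strategy.
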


\begin{proof}
	The results are a direct consequence of the polynomial preserving properties in Theorems \ref{thm:hrpp} and \ref{thm:hripp}, along with the Bramble-Hilbert Lemma \cite{brenner2008fembook, ciarlet2002fembook}.
\end{proof}

\subsection{Recovery techniques for nonstandard numerical methods}\label{ssec:rt}
The recovery techniques in the previous two subsections aim to obtain a globally continuous gradient or Hessian matrix. However, in practice, many problems lack global regularity in the gradient. One typical example is the interface problem, where the parameters of the partial differential equation are discontinuous across the domain, such as in a composite material consisting of two different materials with distinct physical properties. In this subsection, we consider the recovery techniques for interface problems.

In the setting of interface problem, we consider a domain $\Omega$ that is divided into two disjoint subdomains, $\Omega^-$ and $\Omega^+$, by a smooth curve $\Gamma$, which is typically described by the zero level set of a level set function \cite{osher2003levelsetbook, sethian1996levelsetbook}. Let $W^{k,p}(\Omega^- \cup \Omega^+)$ denote the function space consisting of piecewise Sobolev functions $w$ such that $w|_{\Omega^-} \in W^{k,p}(\Omega^-)$ and $w|_{\Omega^+} \in W^{k,p}(\Omega^+)$. The norm on this space is defined as
\begin{equation}\label{eq:pnorm}
\|w\|_{k,p, \Omega^- \cup \Omega^+} = \left( \|w\|_{k,p, \Omega^-}^p + \|w\|_{k,p, \Omega^+}^p \right)^{1/p},
\end{equation}
and the seminorm is defined as
\begin{equation}\label{eq:psnorm}
|w|_{k,p, \Omega^- \cup \Omega^+} = \left( |w|_{k,p, \Omega^-}^p + |w|_{k,p, \Omega^+}^p \right)^{1/p}.
\end{equation}

In this paper, we consider a unified framework that includes both body-fitted finite element methods \cite{babuska1970bodyfitted, chen1998bffem} and unfitted finite element methods, such as immersed finite element methods \cite{hou2005petrov, ji2014scifem, lin2015ppifem} and cut finite element methods \cite{hansbo2002nitsche, burman2015cutfem}. Let $\mathcal{T}_h$ be a shape-regular triangulation of $\Omega$, which can represent either a body-fitted or an unfitted mesh. Define the set of all elements covering the subdomain $\Omega^{\pm}$ as
\begin{equation}\label{equ:fictmesh}
	\mathcal{T}_{h}^{\pm} = \left\{K : K \cap \Omega^{\pm} \neq \emptyset \right\},
\end{equation}
and let
\begin{equation}\label{equ:fictdomain}
	\Omega_{h}^{\pm} = \bigcup_{K \in \mathcal{T}_{h}^{\pm}} K.
\end{equation}
For a body-fitted mesh, we have $\Omega_{h}^{\pm} = \Omega^{\pm}$; however, this does not hold for unfitted finite element methods. In that case, the domain $\Omega^{\pm}_h$ is called a fictitious subdomain.  Let $S_{h}^{\pm}$ be the continuous finite element space with piecewise polynomial degree $k$ on $\mathcal{T}_{h}^{\pm}$, defined as
\begin{equation}
	S_h^{\pm} = \{ v \in C^0(\Omega_{h}^{\pm}) : v|_{K} \in \mathbb{P}_k(K), \forall K\in \mathcal{T}_h^{\pm} \}.
\end{equation}
Let $\mathcal{N}_h^{\pm}$ denote the nodal points for $\mathcal{T}_h^{\pm}$, and $\phi_i^{\pm}$ denote the nodal basis for $S_h^{\pm}$.
For body-fitted finite element methods, we may also consider $S_h^{\pm}$ as the restriction of $S_h$ onto $\Omega^{\pm}$. In the case of cut finite element methods, the cut finite element space $S_h$ is defined as the direct product of $S_h^{\pm}$.

The key observation for interface problems is that their solutions are generally piecewise smooth on each subdomain, even though they may exhibit low global regularity. This motivates the development of a new gradient recovery method by applying the PPR gradient operator to each subdomain, as PPR is a local gradient recovery method.
To this end, let $G_h^{\pm}$ denote the gradient recovery operator defined on the (fictitious) subdomain $\Omega_{h}^{\pm}$. In this case, $G_h^{\pm}$ is a linear operator from $S_h^{\pm}$ to $S_h^{\pm} \times S_h^{\pm}$, with values at each nodal point obtained via local least squares fitting using sampling points located only within $\Omega_{h}^{\pm}$.  Let $u_h = (u_h^-, u_h^+) \in S_h^- \times S_h^+$. We define the recovered gradient of $u_h$ as  \cite{guo2018grbfem, guo2018grcutfem, guo2017grifem, guo2017superconvergentppifem}
\begin{equation}
	R_h u_h = \left(G_h^- u_h^-, G_h^+ u_h^+\right).
\end{equation}

Similar to the recovery techniques discussed in the previous two sections, the gradient recovery procedure can also be implemented using sparse matrices and vector multiplication. In this case, we define first-order differentiation matrices for each $G_h^{\pm}$.

By the polynomial preserving property of $G_h^{\pm}$, we can easily obtain the following piecewise polynomial preserving property:
\begin{theorem}\label{thm:ppp}
	The gradient recovery operator $R_h$ preserves piecewise polynomials defined on $\Omega^{\pm}$ of degree $k + 1$ for an arbitrary mesh.
\end{theorem}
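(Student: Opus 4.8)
The plan is to reduce the claim to the single-domain polynomial preserving property already established in Theorem~\ref{thm:pp}, applied independently on the two (fictitious) subdomains. First I would fix what ``preserves piecewise polynomials of degree $k+1$'' should mean in this unfitted setting: given a pair $u = (u^-, u^+)$ with $u^{\pm} = q^{\pm}|_{\Omega^{\pm}}$ for some $q^{\pm}\in\mathbb{P}_{k+1}$, extend each $q^{\pm}$ by the very same polynomial formula to the whole fictitious domain $\Omega_h^{\pm}$ (this is the one place the unfitted case differs from the body-fitted case, and it is harmless because a polynomial is defined everywhere), and let $u_h = (u_h^-, u_h^+)\in S_h^-\times S_h^+$ be the componentwise nodal interpolant, i.e. $u_h^{\pm} = \sum_{z_i\in\mathcal{N}_h^{\pm}} q^{\pm}(z_i)\phi_i^{\pm}$. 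The assertion to prove is then $R_h u_h = (\nabla q^-,\nabla q^+)$ on $\Omega_h^-\times\Omega_h^+$.

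Next I would invoke the structure of $G_h^{\pm}$: by construction it is exactly the PPR gradient recovery operator of Subsection~\ref{ssec:gr} built on the triangulation $\mathcal{T}_h^{\pm}$ of $\Omega_h^{\pm}$, and every local least-squares patch, every sampling point, and every barycentric or convex combination entering \eqref{equ:recoveredgrad}, \eqref{equ:edgegr}, \eqref{equ:elemgr} involves only nodes of $\mathcal{N}_h^{\pm}$. Hence the hypotheses of Theorem~\ref{thm:pp} are met on $\mathcal{T}_h^{\pm}$ with the single polynomial $q^{\pm}$ playing the role of the generic degree-$(k+1)$ polynomial: the data fed to the local fits are precisely the nodal values $q^{\pm}(z_{i_j})$, so each local fitted polynomial coincides with $q^{\pm}$, and therefore $G_h^{\pm} u_h^{\pm} = \nabla q^{\pm}$ throughout $\Omega_h^{\pm}$ (using that $\nabla q^{\pm}\in\mathbb{P}_k\times\mathbb{P}_k$ is reproduced exactly by nodal interpolation into $S_h^{\pm}\times S_h^{\pm}$). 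Assembling the two components yields $R_h u_h = (G_h^- u_h^-, G_h^+ u_h^+) = (\nabla q^-,\nabla q^+)$, which is the piecewise gradient of $u$.

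The only point requiring care --- and the closest thing to an obstacle --- is the behaviour near the interface $\Gamma$: in the unfitted case the patches used by $G_h^{\pm}$ near $\Gamma$ reach into the fictitious region $\Omega_h^{\pm}\setminus\Omega^{\pm}$, so one must verify that the argument of Theorem~\ref{thm:pp} does not tacitly require the sampling nodes to lie in $\Omega^{\pm}$. It does not: that proof uses only that the fitted data are the restriction of one degree-$(k+1)$ polynomial to the sampling nodes of the patch, which is exactly what the polynomial extension of $q^{\pm}$ guarantees here. No regularity of $u$ across $\Gamma$ is invoked or needed, consistent with the fact that $R_h$ treats $\Omega_h^-$ and $\Omega_h^+$ completely independently. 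With this observation the proof is just the two-fold application of Theorem~\ref{thm:pp} sketched above, followed by the obvious identification of $(\nabla q^-,\nabla q^+)$ with the piecewise gradient of $u$.
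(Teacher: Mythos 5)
Your proposal is correct and follows essentially the same route as the paper, which simply asserts that the result is an immediate consequence of the polynomial preserving property of each $G_h^{\pm}$ (Theorem~\ref{thm:pp}) applied componentwise on $\Omega_h^{\pm}$. Your additional care about extending $q^{\pm}$ polynomially into the fictitious region and checking that the local patches only use nodes of $\mathcal{N}_h^{\pm}$ is a sound elaboration of the same argument, not a different approach.
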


\section{Superconvergence analysis of recovery techniques}\label{sec:super}
There are two frameworks for analyzing the superconvergence of recovery techniques. One approach involves superconvergence analysis using the supercloseness argument on mildly structured meshes \cite{lakhnay2000superconvergence, bank2003asymptotic1}, while the other relies on superconvergence analysis using difference quotients on translation invariant meshes \cite{wahlbin1995superconvergencebook, guo2017hessianrecovery}.

\subsection{Superconvergence analysis on mildly structured meshes}\label{ssec:mildlystructured}
Thought the recovery technique in section \ref{sec:ppr} is problem independent, to do superconvergence analysis, we shall  consider the following model problem in a general setting: find  $ u \in H^1(\Omega) $ such that
\begin{equation}\label{equ:elliptic}
    a(u, v) = \int_{\Omega} (\mathcal{D} \nabla u + \mathbf{b} u) \cdot \nabla v + c u v \, dx 
    =  f(v), \quad \forall v \in H^1(\Omega),
\end{equation}
we assume that the coefficient matrix $ \mathcal{D} $ is a symmetric, positive definite $ 2 \times 2 $ matrix, $ \mathbf{b} $ is a vector, $ c $ is a scalar constant, and $ f(\cdot) $ is a linear functional on $ H^1(\Omega) $. All coefficient functions are taken to be sufficiently smooth.

To ensure the well-posedness of \eqref{equ:elliptic}, we impose that the bilinear form $ a(\cdot, \cdot) $ satisfies a continuity condition:
\begin{equation}
    |a(u, v)| \lesssim \|u\|_{1, \Omega} \|v\|_{1, \Omega},
    \label{equ:continous}
\end{equation}
for all $ u, v \in H^1(\Omega) $. Additionally, we assume that the bilinear form $ a(\cdot, \cdot) $ fulfills the inf-sup conditions (cf. \cite{brenner2008fembook, ciarlet2002fembook})
\begin{equation}
    \inf_{u \in H^1(\Omega)} \sup_{v \in H^1(\Omega)} \frac{a(u, v)}{\|u\|_{1, \Omega} \|v\|_{1, \Omega}}
    =
    \sup_{u \in H^1(\Omega)} \inf_{v \in H^1(\Omega)} \frac{a(u, v)}{\|u\|_{1, \Omega} \|v\|_{1, \Omega}}
    \geq \mu > 0.
    \label{equ:infsup}
\end{equation}

The conforming finite element method for the model equation is to find $u_h\in S_h$ such that
\begin{equation}\label{equ:elliptic_fem}
	a(u_h, v_h) = f(v_h), \quad \forall v_h \in S_h. 
\end{equation}
To guarantee the existence of a unique solution to \eqref{equ:elliptic_fem}, we assume the inf-sup conditions\begin{equation}
  \inf_{u\in S_h}\sup_{v\in S_h}
    \frac{a(u, v)}{\|u\|_{1, \Omega}\|v\|_{1, \Omega}}
    =
  \sup_{u\in S_h}\inf_{v\in S_h}
    \frac{a(u, v)}{\|u\|_{1, \Omega}\|v\|_{1, \Omega}}
    \ge  \mu > 0.
  \label{equ:disinfsup}
\end{equation}
From \eqref{equ:elliptic} and \eqref{equ:elliptic_fem}, it follows that
\begin{equation}
  a(u-u_h, v) = 0,
  \label{equ:orth}
\end{equation}
for any $v \in S_h$. In particular, \eqref{equ:orth} holds for any
$v \in S^{\text{comp}}_h(\Omega)$.

\subsubsection{Linear element} In this subsubsection, we consider $S_h$ to be the continuous linear finite element space. Furthermore, we assume the mesh $\mathcal{T}_h$ satisfies the  Condition $(\sigma, \alpha)$ as defined in Definition \ref{def:cond}.

For the positive $\sigma$ and $\alpha$, Xu and Zhang \cite{xu2004analysisofrecovery} showed the following  supercloseness results:
\begin{theorem}\label{thm:linearsuperclose}
 Let $u$ be the solution of \eqref{equ:elliptic}, let $u_h\in S_h$ be the finite element
solution of \eqref{equ:elliptic_fem}, and let $u_I \in S_h$ be the linear interpolation of $u$ as in \eqref{equ:interp}. If the triangulation
$\mathcal{T}_h$ satisfies Condition $(\sigma,\alpha)$  and $u \in H^3(\Omega) \cap W^{2}_{\infty} (\Omega)$, then
\begin{equation}\label{equ:linearsuperclose}
	|u_h-u_I|_{1,\Omega} \lesssim  h^{1+\rho} (|u|_{3,\Omega} + |u|_{2,\infty,\Omega}),
\end{equation}
where $\rho = \min(\alpha,\sigma/2,1/2)$.
\end{theorem}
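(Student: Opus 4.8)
The plan is to reduce the estimate, via the discrete inf--sup condition and Galerkin orthogonality, to a bound on the consistency error $a(u-u_I,v_h)$, and then to estimate that error element by element, exploiting the (approximate) parallelogram structure supplied by Condition $(\sigma,\alpha)$. Applying \eqref{equ:disinfsup} to $u_h-u_I\in S_h$ and using the orthogonality \eqref{equ:orth}, one gets
\[
\|u_h-u_I\|_{1,\Omega}\;\lesssim\;\sup_{0\neq v_h\in S_h}\frac{a(u_h-u_I,v_h)}{\|v_h\|_{1,\Omega}}\;=\;\sup_{0\neq v_h\in S_h}\frac{a(u-u_I,v_h)}{\|v_h\|_{1,\Omega}},
\]
so it suffices to prove $|a(u-u_I,v_h)|\lesssim h^{1+\rho}\big(|u|_{3,\Omega}+|u|_{2,\infty,\Omega}\big)\|v_h\|_{1,\Omega}$ for all $v_h\in S_h$. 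Splitting $a(\cdot,\cdot)$ into its principal part $\int_\Omega\mathcal D\nabla(u-u_I)\cdot\nabla v_h\,dx$ and the lower-order part $\int_\Omega(\mathbf b(u-u_I))\cdot\nabla v_h+c(u-u_I)v_h\,dx$, the latter is bounded by $C\|u-u_I\|_{0,\Omega}\|v_h\|_{1,\Omega}\lesssim h^2|u|_{2,\Omega}\|v_h\|_{1,\Omega}$ by the standard $L^2$ interpolation estimate and the smoothness of $\mathbf b$ and $c$, hence harmless. Everything then rests on the principal part.

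Next I would split $\mathcal T_h=\mathcal T_{1,h}\cup\mathcal T_{2,h}$ according to Definition \ref{def:cond}. Over $\mathcal T_{2,h}$ the crude H\"older estimate together with the $W^{1,\infty}$ interpolation bound $|u-u_I|_{1,\infty,\Omega}\lesssim h|u|_{2,\infty,\Omega}$ and $\sum_{K\in\mathcal T_{2,h}}|K|=O(h^\sigma)$ gives
\[
\sum_{K\in\mathcal T_{2,h}}\Big|\int_K\mathcal D\nabla(u-u_I)\cdot\nabla v_h\,dx\Big|\;\lesssim\;h|u|_{2,\infty,\Omega}\Big(\sum_{K\in\mathcal T_{2,h}}|K|\Big)^{1/2}\|v_h\|_{1,\Omega}\;\lesssim\;h^{1+\sigma/2}|u|_{2,\infty,\Omega}\|v_h\|_{1,\Omega}.
\]
The same crude bound applies to the $O(h)$-area layer of elements that cannot be completed into parallelogram pairs — those touching $\partial\Omega$ and those bordering $\mathcal T_{2,h}$ — and produces an $O(h^{3/2})$ contribution; this is the source of the cap $\rho\le 1/2$.

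It remains to control $\sum_\tau\int_\tau\mathcal D\nabla(u-u_I)\cdot\nabla v_h\,dx$ over the parallelogram pairs $\tau=K_1\cup K_2$ of $\mathcal T_{1,h}$, and here lies the main obstacle: the local cancellation estimate
\[
\Big|\int_\tau\mathcal D\nabla(u-u_I)\cdot\nabla v_h\,dx\Big|\;\lesssim\;h^2|u|_{3,\tau}\,|v_h|_{1,\tau}+h^{1+\alpha}|u|_{2,\infty,\tau}\,|v_h|_{1,1,\tau}.
\]
I would first establish this for a \emph{genuine} parallelogram: since $\nabla v_h$ is constant on each of $K_1$ and $K_2$, the integral reduces to those constant vectors dotted with $\int_{K_j}\nabla(u-u_I)\,dx$; rewriting these through the second derivatives of $u$ and invoking the point symmetry of a parallelogram, the leading-order contributions from $K_1$ and $K_2$ cancel, leaving an $O(h^2)$ remainder controlled by $|u|_{3,\tau}$ through the Bramble--Hilbert lemma on a reference configuration. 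For an $O(h^{1+\alpha})$ approximate parallelogram I would treat the geometric deviation between opposite edges as a perturbation: it enters linearly in the affine element maps, hence in the integrand, and the imperfect cancellation contributes the extra $h^{1+\alpha}|u|_{2,\infty,\tau}$ term. Summing over all pairs and applying Cauchy--Schwarz,
\[
\sum_\tau h^2|u|_{3,\tau}|v_h|_{1,\tau}\le h^2|u|_{3,\Omega}\|v_h\|_{1,\Omega},\qquad\sum_\tau h^{1+\alpha}|u|_{2,\infty,\tau}|v_h|_{1,1,\tau}\le h^{1+\alpha}|u|_{2,\infty,\Omega}|\Omega|^{1/2}\|v_h\|_{1,\Omega}.
\]
Collecting the contributions from $\mathcal T_{1,h}$, from $\mathcal T_{2,h}$, from the boundary layer, and from the lower-order term, and using $h^2\lesssim h^{1+\rho}$ (since $\rho\le 1$), yields $|a(u-u_I,v_h)|\lesssim h^{1+\rho}(|u|_{3,\Omega}+|u|_{2,\infty,\Omega})\|v_h\|_{1,\Omega}$ with $\rho=\min(\alpha,\sigma/2,1/2)$, which gives \eqref{equ:linearsuperclose}. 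The delicate point deserving the most care is the local parallelogram cancellation identity and the careful tracking of how the $O(h^{1+\alpha})$ geometric perturbation propagates into it.
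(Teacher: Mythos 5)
The paper itself does not prove this theorem: it states it as a known result of Xu and Zhang \cite{xu2004analysisofrecovery}, so the only meaningful comparison is with that reference. Your overall strategy is indeed the one used there: reduce via the discrete inf--sup condition and Galerkin orthogonality to the consistency error $a(u-u_I,v_h)$, dispose of the lower-order terms and of $\mathcal{T}_{2,h}$ and the boundary layer by crude H\"older bounds of orders $h^{2}$, $h^{1+\sigma/2}$ and $h^{3/2}$, and extract an $O(h^{1+\alpha})$ gain from the approximate-parallelogram structure; the bookkeeping leading to $\rho=\min(\alpha,\sigma/2,1/2)$ is also correct.

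The gap is in the central cancellation step, in two respects. First, the ``parallelogram pairs'' $\tau=K_1\cup K_2$ do not tile $\mathcal{T}_{1,h}$: Condition $(\sigma,\alpha)$ asks that \emph{every} pair of adjacent triangles form an approximate parallelogram, so each triangle lies in up to three pairs and $\sum_K\int_K$ is not $\sum_\tau\int_\tau$. Second, and decisively, the pair-local estimate you propose is false even for an exact parallelogram. Take $K_1$ with vertices $(0,0),(1,0),(1,1)$, $K_2$ with vertices $(0,0),(1,1),(0,1)$, and $u=x^2$; then $u_I=x$ on both triangles, $\nabla(u-u_I)=(2x-1,0)$, and $\int_{K_1}\nabla(u-u_I)\,dx=(\tfrac16,0)=-\int_{K_2}\nabla(u-u_I)\,dx$. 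Writing $c_j=\nabla v_h|_{K_j}$, the pair integral (with $\mathcal D=I$) equals $\tfrac16\,(c_1-c_2)\cdot(1,0)$, and $c_1-c_2$ is a generically nonzero multiple of the normal to the shared diagonal. So the integral does not vanish although $|u|_{3,\tau}=0$ and the parallelogram is exact; after scaling it is $O(h^3)$ per pair and only $O(h)$ after summation, i.e.\ no gain over the trivial rate. The point symmetry of the parallelogram kills only the part of the integrand paired with the mean $\tfrac12(c_1+c_2)$, not the part paired with the jump $c_1-c_2$. The cancellation that actually proves the theorem is edge-based, not pair-based: one uses the exact identity writing $\int_K\nabla(u-u_I)\cdot\nabla v_h\,dx$ as a sum over the three edges of $K$ of geometric coefficients times $\int_e\partial_{t_e}^2u\,\partial_{t_e}v_h\,ds$, plus an $O(h^2)|u|_{3,K}\,|v_h|_{1,K}$ remainder. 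Because the tangential derivative $\partial_{t_e}v_h$ is single-valued across each interior edge, the two adjacent triangles contribute the \emph{same} edge integral with geometric coefficients whose difference is $O(h^{1+\alpha})$ exactly when they form an $O(h^{1+\alpha})$ parallelogram; summing over edges then gives the $h^{1+\alpha}|u|_{2,\infty,\Omega}$ term. Without this identity --- that is, without exploiting the continuity of the tangential component of $\nabla v_h$ across edges --- your argument cannot close.
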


Building on the supercloseness result, we first present the superconvergence analysis for gradient recovery.

\begin{theorem}\label{thm:grsuperconvergence}
	Suppose the solution $u$ belongs to $ W^{3}_{\infty} (\Omega)$ and the mesh $\mathcal{T}_h$  satisfies the 
	Condition $(\sigma,\alpha)$ . Then, the following inequality holds
	 \begin{equation}
	 	\|\nabla u - G_h u_h\|_{0, \Omega} \lesssim h^{1+\rho}\|u\|_{3,\infty,\Omega},
	 \end{equation}
	 where $\rho = \min(\alpha,\sigma/2,1/2)$.
\end{theorem}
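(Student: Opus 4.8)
The plan is to combine three ingredients that are all available at this point in the paper: the supercloseness estimate of Theorem~\ref{thm:linearsuperclose}, the pointwise consistency estimate of Theorem~\ref{thm:consistency} for the recovery operator $G_h$, and the boundedness estimate of Theorem~\ref{thm:boundedness}. The standard device is to insert the interpolant $u_I$ and use the triangle inequality
\begin{equation*}
\|\nabla u - G_h u_h\|_{0,\Omega}
\le \|\nabla u - G_h u_I\|_{0,\Omega} + \|G_h u_I - G_h u_h\|_{0,\Omega}
=: \mathrm{I} + \mathrm{II},
\end{equation*}
so that the two terms can be treated by consistency and by stability, respectively.

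For term~$\mathrm{I}$, I would argue locally: on each element $K$ one has $\|\nabla u - G_h u_I\|_{0,K} \le |K|^{1/2}\,\|\nabla u - G_h u_I\|_{0,\infty,\Omega_K}$, and since $u_I = u$ at all nodal points in $\Omega_K$, the pointwise consistency bound behind Theorem~\ref{thm:consistency} — i.e. $\|\nabla u - G_h u_I\|_{0,\infty,\Omega_{z}} \lesssim h^{2}\,|u|_{3,\infty,\Omega_{z}}$ for linear elements ($k=1$) — applies with $u_I$ in place of the smooth function, because $G_h$ only sees nodal values. (Here one uses that $G_h u_I$ at a node is $\nabla p_{z_i}(z_i)$ where $p_{z_i}$ interpolates $u$ at the sampling nodes, exactly as in the proof of the polynomial preserving property.) Summing the squares over $K \in \mathcal{T}_h$, using finite overlap of the patches $\Omega_K$ and $\sum_K |K| = |\Omega|$, gives $\mathrm{I} \lesssim h^{2}\,|u|_{3,\infty,\Omega} \lesssim h^{1+\rho}\,\|u\|_{3,\infty,\Omega}$ since $\rho \le 1$.

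For term~$\mathrm{II}$, linearity of $G_h$ gives $G_h u_I - G_h u_h = G_h(u_I - u_h)$, and Theorem~\ref{thm:boundedness} (summed over elements, again with finite overlap of the patches $\Omega_K$) yields $\|G_h(u_I - u_h)\|_{0,\Omega} \lesssim |u_I - u_h|_{1,\Omega}$. Now Theorem~\ref{thm:linearsuperclose} bounds the right-hand side by $h^{1+\rho}(|u|_{3,\Omega} + |u|_{2,\infty,\Omega}) \lesssim h^{1+\rho}\,\|u\|_{3,\infty,\Omega}$, using the embedding $W^3_\infty(\Omega) \hookrightarrow H^3(\Omega)$ on the bounded domain $\Omega$. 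Adding the two bounds completes the proof.

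The only genuinely delicate point is term~$\mathrm{I}$: one must be careful that the consistency estimate, stated in the excerpt for a smooth argument $u$, can be transferred to $G_h u_I$. This is legitimate precisely because $G_h$ is determined by nodal values and $u_I$ agrees with $u$ at all nodes; equivalently, $G_h u_I$ at a vertex equals the gradient at that vertex of the least-squares polynomial fitted to the data $\{u(z_{i_j})\}$, so the Bramble–Hilbert argument in the proof of Theorem~\ref{thm:consistency} goes through verbatim with $u$ replaced by $u_I$ on the nodal set. Everything else — passing from $L^\infty$ patch bounds to a global $L^2$ bound via $|K|^{1/2}$ factors and shape-regularity, and the Sobolev embedding used to absorb the $H^3$ seminorm — is routine.
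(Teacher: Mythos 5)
Your proof is correct and follows essentially the same route as the paper's: consistency of $G_h$ (Theorem~\ref{thm:consistency}) for the interpolated part, boundedness (Theorem~\ref{thm:boundedness}) plus the supercloseness estimate (Theorem~\ref{thm:linearsuperclose}) for $G_h(u_I-u_h)$. The only cosmetic difference is that the paper inserts $(\nabla u)_I$ as a third intermediate term, whereas you absorb that step by invoking the patchwise $L^\infty$ consistency bound directly on $\nabla u - G_h u_I$ (justified, as you note, by $G_h u_I = G_h u$ since $G_h$ depends only on nodal values).
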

\begin{proof}
Using the triangle inequality, we obtain
	\begin{equation}
		\begin{aligned}
			\|\nabla u - G_h u_h\|_{0, \Omega} \le \|\nabla u- (\nabla u)_I\|_{0, \Omega} + \|(\nabla u)_I - G_h u_I \|_{0, \Omega} + \|G_h u_I - G_h u_h\|_{0, \Omega}. 
		\end{aligned}
	\end{equation}
By the standard approximation results \cite{brenner2008fembook, ciarlet2002fembook}, we have 
 \begin{equation}
   	 \|\nabla u - (\nabla u)_I \|_{0, \Omega} \lesssim   h^2 \|u\|_{3, \Omega}.
   \end{equation}
  For the second term, we have 
  \begin{equation}
  	\begin{aligned}
  		\|(\nabla u)_I - G_h u_I \|_{0, \Omega}
  		\lesssim &\|(\nabla u)_I - G_h u_I \|_{0, \infty, \Omega} \\
  		\le &\|\sum_{i = 1}^{| \mathcal{N}_h|}((\nabla  u)(z_i) - (G_hu)(z_i))\phi_{i}\|
    _{0, \infty, \Omega}\\
    \lesssim&
    \max_{z_i\in \mathcal{N}_h \cap \Omega} |(\nabla u)(z_i) - (G_hu)(z_i)|\\
    \lesssim&
    h^{2}|u|_{3, \infty, \Omega},
  	\end{aligned}
  \end{equation}
where we have used  Theorem \ref{thm:consistency} in the last inequality. 
To estimate the last term, we first apply the boundedness property of $ G_h $ from Theorem \ref{thm:boundedness}, yielding
\begin{equation*}
	\begin{aligned}
		\|G_hu_I - G_hu_h\|_{0, K} \lesssim  \| \nabla u_I - \nabla u_h \|_{0, \Omega_{K}}, \quad
		\forall K \in \mathcal{T}_h. 
	\end{aligned}
\end{equation*}
Thus,  we have 
\begin{equation}\label{equ:intermediate}
\begin{aligned}
	 \|G_h u_I - G_h u_h\|_{0, \Omega}^2 = & \sum_{K\in\mathcal{T}_h}  \|G_h u_I - G_h u_h\|_{0, K}^2 \\
	   \lesssim	 & \sum_{K\in\mathcal{T}_h }\| \nabla u_I - \nabla u_h \|_{0, \Omega_{K}}^2 \\
	    \lesssim & \|\nabla u_I - \nabla u_h\|_{0, \Omega}^2.
\end{aligned}
\end{equation}
Since the mesh $\mathcal{T}_h$ satisfies the Condition  $(\sigma,\alpha)$, it follows that 
\begin{equation}
	 \|\nabla u_I - \nabla u_h\|_{0, \Omega} \lesssim h^{1+\rho} (|u|_{3,\Omega} + |u|_{2,\infty,\Omega}),
\end{equation}
where $\rho = \min(\alpha,\sigma/2,1/2)$. The proof is concluded by combining all the above estimates.
\end{proof}

Next, we shall present the superconvergence results for Hessian recovery. 
\begin{theorem}\label{thm:hrlinearsuper}
	Suppose the mesh $\mathcal{T}_h$ is quasi-uniform and satisfies the Condition $(\sigma, \alpha)$. If $u \in W^3_{\infty}(\Omega)$, then the following estimate holds:
\begin{equation}
	\|H u - H_h u_h\|_{0, \Omega} \le h^{\rho} \|u\|_{3, \infty, \Omega}, 
\end{equation}
	 where $\rho = \min(\alpha,\sigma/2,1/2)$.
\end{theorem}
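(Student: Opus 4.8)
The plan is to mimic the three-term triangle-inequality decomposition used for Theorem~\ref{thm:grsuperconvergence}, but applied componentwise to the Hessian. Writing $(Hu)_I$ for the nodewise linear interpolant of the exact Hessian $Hu$, I would split
\[
\|Hu - H_h u_h\|_{0,\Omega} \le \|Hu - (Hu)_I\|_{0,\Omega} + \|(Hu)_I - H_h u_I\|_{0,\Omega} + \|H_h u_I - H_h u_h\|_{0,\Omega}
\]
and treat the three pieces in turn. The first is pure approximation theory: since $u\in W^3_\infty(\Omega)$ we have $Hu\in W^1_\infty(\Omega)$, so $\|Hu-(Hu)_I\|_{0,\Omega}\lesssim h\,|u|_{3,\infty,\Omega}$, which is $\lesssim h^{\rho}\|u\|_{3,\infty,\Omega}$ because $\rho\le 1$.

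For the second term I would pass to the $L^\infty$ norm, using that for linear Lagrange elements the sup-norm of a finite element function equals the maximum of its nodal values. Because $H_h$ uses only nodal data, $(H_h u_I)(z_i)$ is exactly the quantity written $(H_h u)(z_i)$ in Theorem~\ref{thm:hessianconsistency}; applying that consistency estimate with $k=1$ gives $|((Hu)_I-H_h u_I)(z_i)|=|Hu(z_i)-(H_h u)(z_i)|\lesssim h\,|u|_{3,\infty,\Omega_{z_i}}$. Taking the maximum over nodes and using boundedness of $\Omega$ yields $\|(Hu)_I-H_h u_I\|_{0,\Omega}\lesssim h\,|u|_{3,\infty,\Omega}\lesssim h^{\rho}\|u\|_{3,\infty,\Omega}$.

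The third term is the crux. Since $H_h w_h=(G_h(G_h^x w_h),\,G_h(G_h^y w_h))$ and $G_h u_I-G_h u_h=G_h(u_I-u_h)\in S_h\times S_h$, a componentwise application of the boundedness of $G_h$ (Theorem~\ref{thm:boundedness}), summed over elements with finite overlap exactly as in \eqref{equ:intermediate}, gives $\|H_h u_I-H_h u_h\|_{0,\Omega}\lesssim\|\nabla(G_h(u_I-u_h))\|_{0,\Omega}$. Now $G_h(u_I-u_h)$ is a finite element function, so on the quasi-uniform mesh an inverse estimate bounds its gradient by $h^{-1}\|G_h(u_I-u_h)\|_{0,\Omega}$, and a second use of Theorem~\ref{thm:boundedness} bounds that by $h^{-1}\|\nabla(u_I-u_h)\|_{0,\Omega}$. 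Finally the supercloseness result Theorem~\ref{thm:linearsuperclose} (applicable since $W^3_\infty(\Omega)\subset H^3(\Omega)\cap W^2_\infty(\Omega)$) gives $\|\nabla(u_I-u_h)\|_{0,\Omega}\lesssim h^{1+\rho}(|u|_{3,\Omega}+|u|_{2,\infty,\Omega})$, so the factor $h^{-1}$ is absorbed and, using $|u|_{3,\Omega}+|u|_{2,\infty,\Omega}\lesssim\|u\|_{3,\infty,\Omega}$ on the bounded domain $\Omega$, the third term is $\lesssim h^{\rho}\|u\|_{3,\infty,\Omega}$. Summing the three bounds gives the claim.

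The main obstacle is precisely this boundedness step for $H_h$, and it explains both extra hypotheses: quasi-uniformity is needed because the estimate passes through an inverse inequality (Condition $(\sigma,\alpha)$ alone does not control $h$ uniformly), and the final rate is $h^{\rho}$ rather than $h^{1+\rho}$ because iterating $G_h$ twice is not bounded from an $H^1$-type norm into $L^2$ without paying one power of $h^{-1}$, which the $h^{1+\rho}$ supercloseness gain just manages to absorb. I would also take care over the componentwise application of $G_h$ to the vector-valued argument and over the finite-overlap covering argument needed to globalize the local patch estimate of Theorem~\ref{thm:boundedness}.
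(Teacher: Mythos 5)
Your proposal follows essentially the same route as the paper's own proof: the identical three-term decomposition, the same nodal $L^\infty$ consistency argument for the middle term via Theorem~\ref{thm:hessianconsistency}, and the same treatment of the third term by the boundedness of $G_h$, an inverse inequality on the quasi-uniform mesh, a second application of boundedness, and the supercloseness estimate of Theorem~\ref{thm:linearsuperclose}. Your identification of why quasi-uniformity is needed and why one power of $h$ is lost matches the paper's argument exactly.
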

\begin{proof}
Using the triangle inequality and the definition of $H_h$, we obtain 
   \begin{align*}
       	\|H u - H_h u_h\|_{0, \Omega}&\le
	\|Hu-(Hu)_I\|_{0, \Omega} + \|(Hu)_I-H_hu_I\|_{0, \Omega}+\|H_h(u_I-u_h)\|_{0, \Omega}\\
	&=\|Hu-(Hu)_I\|_{0, \Omega} + \|(Hu)_I-H_hu_I\|_{0, \Omega}+\|G_h(G_h(u_I-u_h))\|_{0, \Omega}.
   \end{align*}
   By the standard approximation approximation theory \cite{brenner2008fembook, ciarlet2002fembook},  the first term is bounded by 
   \begin{equation}
   	\|Hu-(Hu)_I\|_{0, \Omega}  \lesssim h|u|_{3, \Omega}. 
   \end{equation}
  For the second term, we have 
  \begin{equation}
  	\begin{aligned}
  		\|(H u)_I - H_h u_I \|_{0, \Omega}
  		\lesssim &\|(H u)_I - H_h u_I \|_{0, \infty, \Omega} \\
  		\le &\|\sum_{i = 1}^{| \mathcal{N}_h|}((H  u)(z_i) - (H_hu)(z_i))\phi_{i}\|
    _{0, \infty, \Omega}\\
    \lesssim&
    \max_{z_i\in \mathcal{N}_h \cap \Omega} |(H u)(z_i) - (H_hu)(z_i)|\\
    \lesssim&
    h|u|_{3, \infty, \Omega},
  	\end{aligned}
  \end{equation}
   where we have used the  Theorem \ref{thm:hessianconsistency} in the last inequality.  
Using  \eqref{equ:intermediate}, we have 
\begin{equation*}
	\|H_h (u_I - u_h)\|_{0, \Omega} \lesssim \|\nabla (G_h (u_I - u_h))\|_{0, \Omega}.
\end{equation*}
Since $\mathcal{T}_h$ is quasi-uniform and $G_h (u_I - u_h) \in S_h \times S_h$, the inverse inequality \cite{brenner2008fembook, ciarlet2002fembook} implies 
   \begin{align*}
       \|H_h(u_I-u_h)\|_{0, \Omega}\lesssim h^{-1}\|G_h(u_I-u_h)\|_{0, \Omega}
       \lesssim h^{-1}\|\nabla(u_I-u_h)\|_{0, \Omega}
       \lesssim h^{\rho}|u|_{3, \infty, \Omega},
   \end{align*}
  where we have used the supercloseness result \eqref{equ:linearsuperclose}.
Combining the estimates above completes the proof.
\end{proof}

\subsubsection{Quadratic element}  In this subsubsection, we perform a superconvergence analysis for recovery techniques applied to   quadratic elements. Analogous to the linear case, certain assumptions on the mesh $\mathcal{T}_h$ are required. For quadratic elements, however, slightly stronger regularity conditions are necessary. We define a mesh $\mathcal{T}_h$ as strongly regular if any two adjacent triangles within $\mathcal{T}_h$ form an approximate parallelogram with an error of $O(h^2)$.

When the mesh $\mathcal{T}_h$ satisfies the stronger regularity, Huang and Xu \cite{huang2008quadraticsupercloseness} proved the following supercloseness results:
\begin{theorem}\label{thm:quadraticsuperclose}
Assume that the triangulation $\mathcal{T}_h$ is strongly regular. If the solution $u \in H^4(\Omega)$, then the following estimate holds:
    \begin{equation*}
	|u_h-u_I|_{1, \Omega}\lesssim h^3|u|_{4,  \Omega}.
    \end{equation*}
    \label{quadsuperclose}
\end{theorem}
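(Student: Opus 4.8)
The plan is to follow the classical supercloseness route --- Galerkin orthogonality combined with the discrete inf-sup bound and an element-by-element expansion of the bilinear form on the interpolation error --- which is precisely the strategy of Huang and Xu \cite{huang2008quadraticsupercloseness}. First I would exploit \eqref{equ:orth}: since $a(u-u_h,v_h)=0$ for every $v_h\in S_h$, we have $a(u_h-u_I,v_h)=a(u-u_I,v_h)$, and the discrete inf-sup condition \eqref{equ:disinfsup} gives
\[
|u_h-u_I|_{1,\Omega}\le\|u_h-u_I\|_{1,\Omega}\lesssim\sup_{0\ne v_h\in S_h}\frac{a(u-u_I,v_h)}{\|v_h\|_{1,\Omega}}.
\]
Hence everything reduces to the weak estimate $|a(u-u_I,v_h)|\lesssim h^3|u|_{4,\Omega}\|v_h\|_{1,\Omega}$ for all $v_h\in S_h$.

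Second, I would split $a(\cdot,\cdot)$ as in \eqref{equ:elliptic} into its principal part $\int_\Omega\mathcal D\nabla(u-u_I)\cdot\nabla v_h\,dx$ and the lower-order part $\int_\Omega(\mathbf b(u-u_I))\cdot\nabla v_h+c(u-u_I)v_h\,dx$. The lower-order part is immediately controlled by the $L^2$ interpolation estimate $\|u-u_I\|_{0,\Omega}\lesssim h^3|u|_{3,\Omega}$ (a consequence of the Bramble--Hilbert lemma, since quadratic interpolation reproduces $\mathbb P_2$), yielding a contribution of the required order $h^3$. For the principal part I would freeze the smooth coefficient $\mathcal D$ at the barycentre of each element $K$; since $\mathcal D$ varies by $O(h)$ on $K$ and $|u-u_I|_{1,K}\lesssim h_K^2|u|_{3,K}$, the frozen-coefficient error is $\lesssim h^3|u|_{3,\Omega}\|v_h\|_{1,\Omega}$, so the whole matter is reduced to estimating $\sum_{K\in\mathcal T_h}\int_K\nabla(u-u_I)\cdot M_K\nabla v_h\,dx$ with each $M_K$ a constant symmetric matrix.

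The crux is an exact element-level expansion of $\int_K\nabla(u-u_I)\cdot M_K\nabla v_h\,dx$. On each $K$ I would write $u$ as its cubic Taylor polynomial at the barycentre plus a remainder $R$ with $|R|_{j,K}\lesssim h_K^{4-j}|u|_{4,K}$; the remainder contributes $|R-R_I|_{1,K}\lesssim h_K^2|R|_{3,K}\lesssim h_K^3|u|_{4,K}$ and is harmless after summation. For $u$ locally cubic and $v_h$ quadratic on $K$, a direct computation --- using that $\nabla v_h$ is linear on $K$ and the explicit nodal form of the quadratic interpolant (vertices plus edge midpoints) --- produces an identity
\[
\int_K\nabla(u-u_I)\cdot M_K\nabla v_h\,dx=\mathcal F_K(u;v_h)+O(h^3)\,|u|_{4,K}\|v_h\|_{1,K},
\]
where $\mathcal F_K$ is a linear functional built from third-order derivatives of $u$ on $K$ and tangential derivatives of $v_h$ along the edges of $K$. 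The decisive algebraic fact is that when two adjacent triangles $K_1,K_2$ share an edge and form an $O(h^2)$ approximate parallelogram --- which, by the strong regularity of $\mathcal T_h$, holds for \emph{every} interior edge --- the functionals nearly cancel: $|\mathcal F_{K_1}(u;v_h)+\mathcal F_{K_2}(u;v_h)|\lesssim h^3|u|_{4,K_1\cup K_2}\|v_h\|_{1,K_1\cup K_2}$. Summing over all interior edges and applying a discrete Cauchy--Schwarz inequality (shape regularity bounding the number of edges meeting at a vertex) then gives the claimed $h^3$ bound for the interior contribution.

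The step I expect to be the main obstacle is establishing this element identity together with the cross-element cancellation: one must track which third-derivative moments of $u$ survive quadratic interpolation, show that the surviving part is exactly the edge functional $\mathcal F_K$, and verify that the mismatch of opposite edges in an $O(h^2)$ parallelogram degrades $\mathcal F_{K_1}+\mathcal F_{K_2}$ by a \emph{full} power of $h$ rather than $h^{\alpha}$ (this is where strong regularity, as opposed to Condition $(\sigma,\alpha)$, is essential). A secondary technical point is the treatment of elements abutting $\partial\Omega$, whose edge functionals lack a neighbour to cancel against; these need a separate boundary-layer analysis exploiting the polygonal structure of $\partial\Omega$, as carried out in \cite{huang2008quadraticsupercloseness}. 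Assembling the interior bound, the boundary bound, the frozen-coefficient and Taylor-remainder errors, and the lower-order terms yields $|a(u-u_I,v_h)|\lesssim h^3|u|_{4,\Omega}\|v_h\|_{1,\Omega}$, and the theorem follows from the inf-sup reduction above.
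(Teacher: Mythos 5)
The paper does not prove this theorem itself---it is imported verbatim from Huang and Xu \cite{huang2008quadraticsupercloseness} with no proof supplied---and your outline (Galerkin orthogonality plus the discrete inf-sup condition to reduce everything to the weak estimate $|a(u-u_I,v_h)|\lesssim h^3|u|_{4,\Omega}\|v_h\|_{1,\Omega}$, coefficient freezing, and the edge-functional cancellation between adjacent triangles forming $O(h^2)$ approximate parallelograms) is precisely the strategy of that reference. Your sketch is correct as far as it goes, and the element-level identity together with the parallelogram cancellation that you rightly single out as the main obstacle is exactly the technical content carried out in the cited paper.
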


Using the supercloseness results, we can show the superconvergence of the recovered gradient with quadratic elements.
\begin{theorem}\label{equ:grquadsuper}
	Suppose the solution $u$ belongs to $ W^{4}_{\infty} (\Omega)$ and the mesh $\mathcal{T}_h$ is  strongly regular. Then, the following inequality holds
	 \begin{equation}
	 	\|\nabla u - G_h u_h\|_{0, \Omega} \lesssim h^{3}\|u\|_{4,\infty,\Omega}.
	 \end{equation}
\end{theorem}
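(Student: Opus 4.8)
The plan is to mimic the proof of Theorem~\ref{thm:grsuperconvergence} exactly, replacing the linear-element ingredients with their quadratic-element counterparts. Starting from the triangle inequality, I would write
\begin{equation*}
\|\nabla u - G_h u_h\|_{0,\Omega} \le \|\nabla u - (\nabla u)_I\|_{0,\Omega} + \|(\nabla u)_I - G_h u_I\|_{0,\Omega} + \|G_h u_I - G_h u_h\|_{0,\Omega},
\end{equation*}
where now $u_I\in S_h$ is the quadratic interpolant. The three terms are then handled separately.

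For the first term, standard approximation theory for the quadratic interpolant gives $\|\nabla u - (\nabla u)_I\|_{0,\Omega}\lesssim h^3|u|_{4,\Omega}$, since $(\nabla u)_I$ interpolates the vector field $\nabla u\in (H^3(\Omega))^2$ in the continuous piecewise quadratic space. For the second term, I would pass to the $L^\infty$ norm and use the nodal representation $(\nabla u)_I - G_h u_I = \sum_i\big((\nabla u)(z_i) - (G_h u)(z_i)\big)\phi_i$, bound it by $\max_{z_i}|(\nabla u - G_h u)(z_i)|$ (using the local uniform boundedness of the nodal basis), and invoke the consistency estimate in Theorem~\ref{thm:consistency}, which for $k=2$ and $u\in W^{3}_\infty$ — actually $W^{k+2}_\infty = W^{4}_\infty$ — yields $h^{k+1}|u|_{k+2,\infty,\Omega} = h^3|u|_{4,\infty,\Omega}$. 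For the third term, I would apply the boundedness property of $G_h$ from Theorem~\ref{thm:boundedness} elementwise, sum over $K\in\mathcal{T}_h$ with finite overlap of the patches $\Omega_K$ to get $\|G_h u_I - G_h u_h\|_{0,\Omega}\lesssim \|\nabla(u_I - u_h)\|_{0,\Omega}$, and then invoke the quadratic supercloseness result Theorem~\ref{thm:quadraticsuperclose}, which gives $\|\nabla(u_I-u_h)\|_{0,\Omega}\lesssim h^3|u|_{4,\Omega}$ under the strong regularity assumption. Collecting the three bounds yields $\|\nabla u - G_h u_h\|_{0,\Omega}\lesssim h^3\|u\|_{4,\infty,\Omega}$.

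There is essentially no hard obstacle here since the architecture is identical to the linear case; the only points requiring care are bookkeeping ones. First, one must check that the consistency estimate of Theorem~\ref{thm:consistency} is applied with the correct index: for $k=2$ we need $u\in W^{k+2}_\infty = W^4_\infty$, which is exactly the hypothesis, and it delivers $h^{k+1}=h^3$, matching the target rate. Second, the supercloseness theorem for quadratics requires strong regularity (approximate parallelograms with $O(h^2)$ error) rather than merely Condition~$(\sigma,\alpha)$, so the statement correctly assumes this; one should make sure the passage from $\|G_h u_I - G_h u_h\|_{0,\Omega}$ to $\|\nabla(u_I-u_h)\|_{0,\Omega}$ only uses the boundedness of $G_h$ and finite patch overlap (shape regularity), not the mesh structure, so that the structural hypothesis enters only through Theorem~\ref{thm:quadraticsuperclose}. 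With these checks in place the proof is a routine three-term estimate, so I would keep it short and simply cite the three supporting results in the appropriate places.
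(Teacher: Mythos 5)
Your proposal is correct and is exactly the argument the paper intends: the paper's proof is the one-line remark that the result follows as in Theorem \ref{thm:grsuperconvergence} with Theorem \ref{thm:quadraticsuperclose} supplying the supercloseness, and your three-term decomposition with the consistency and boundedness theorems is precisely that argument written out in full.
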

\begin{proof}
The proof follows similarly to that of Theorem \ref{thm:grsuperconvergence} by applying Theorem \ref{thm:quadraticsuperclose}.
\end{proof}

Similarly, we can establish the superconvergence results for the Hessian recovery operator $H_h$ when using quadratic elements.
\begin{theorem}\label{thm:hrquadsuper}
	Suppose the solution $u$ belongs to $ W^{4}_{\infty} (\Omega)$ and the mesh $\mathcal{T}_h$ is  quasi-uniform and strongly regular. Then, the following inequality holds
	 \begin{equation}
	 	\|Hu - H_h u_h\|_{0, \Omega} \lesssim h^{2}\|u\|_{4,\infty,\Omega}.
	 \end{equation}
\end{theorem}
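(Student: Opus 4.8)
The plan is to follow the same three-term decomposition used in the proof of Theorem~\ref{thm:hrlinearsuper}, simply substituting the quadratic-element parameters ($k=2$, cubic interpolation, the stronger supercloseness rate) for the linear ones. Writing $H_h u_h = H_h u_I - H_h(u_I - u_h)$ and inserting the nodal interpolant $(Hu)_I \in S_h$ of the Hessian $Hu$ (which is continuous since $u \in W^4_\infty(\Omega)$), the triangle inequality gives
\begin{equation*}
\|Hu - H_h u_h\|_{0,\Omega} \le \|Hu - (Hu)_I\|_{0,\Omega} + \|(Hu)_I - H_h u_I\|_{0,\Omega} + \|H_h(u_I - u_h)\|_{0,\Omega},
\end{equation*}
and I would bound each piece separately, aiming for $h^2\|u\|_{4,\infty,\Omega}$ in all three so that they balance at the claimed rate.

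For the first (interpolation) term, standard approximation theory on the quadratic space $S_h$ gives $\|Hu - (Hu)_I\|_{0,\Omega} \lesssim h^2 |Hu|_{2,\Omega} = h^2 |u|_{4,\Omega} \lesssim h^2 |u|_{4,\infty,\Omega}$; only the $W^4_\infty$ regularity of $u$ is available here, so one cannot exploit the full cubic interpolation estimate to gain an extra power. For the second (consistency) term, I would pass from $\|\cdot\|_{0,\Omega}$ to $\|\cdot\|_{0,\infty,\Omega}$ on the finite-dimensional space $S_h$, reduce to the nodal quantity $\max_{z_i \in \mathcal{N}_h \cap \Omega}|(Hu)(z_i) - (H_h u_I)(z_i)|$, and apply Theorem~\ref{thm:hessianconsistency} with $k=2$, which yields $h^2 |u|_{4,\infty,\Omega}$. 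Note that the sharper translation-invariant estimates in Theorem~\ref{thm:hessianconsistency} are not used, since a strongly regular mesh need not be translation invariant.

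The third term carries the superconvergence content. Using $H_h w = \bigl(G_h(G_h^x w),\, G_h(G_h^y w)\bigr)$ componentwise together with the boundedness estimate of Theorem~\ref{thm:boundedness}, summed over elements exactly as in \eqref{equ:intermediate}, one gets $\|H_h w\|_{0,\Omega} \lesssim \|\nabla(G_h w)\|_{0,\Omega}$ for $w \in S_h$. Since $G_h w \in S_h \times S_h$ and $\mathcal{T}_h$ is quasi-uniform, the inverse inequality gives $\|\nabla(G_h w)\|_{0,\Omega} \lesssim h^{-1}\|G_h w\|_{0,\Omega} \lesssim h^{-1}\|\nabla w\|_{0,\Omega}$, applying boundedness once more. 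Taking $w = u_I - u_h$ and invoking the supercloseness bound of Theorem~\ref{thm:quadraticsuperclose}, $|u_h - u_I|_{1,\Omega} \lesssim h^3 |u|_{4,\Omega}$, produces $\|H_h(u_I - u_h)\|_{0,\Omega} \lesssim h^{-1}\cdot h^3 |u|_{4,\Omega} \lesssim h^2 |u|_{4,\infty,\Omega}$. Combining the three estimates completes the proof.

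I do not anticipate a genuine obstacle in assembling these pieces: the real difficulty is already packaged in the cited supercloseness result Theorem~\ref{thm:quadraticsuperclose} (which is where the strong regularity of the mesh is consumed), and the remainder is routine "interpolation $+$ consistency $+$ inverse inequality" bookkeeping. The one structural point worth flagging is that the $h^{-1}$ loss from the inverse inequality is unavoidable in this argument — it is precisely why Hessian recovery is one order less accurate than gradient recovery (compare Theorem~\ref{equ:grquadsuper}) and why quasi-uniformity, which was not needed for the gradient-recovery theorem, must be imposed here.
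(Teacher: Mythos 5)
Your proposal is correct and is precisely the argument the paper intends: its proof of this theorem is the one-line statement that it is analogous to Theorem~\ref{thm:hrlinearsuper} using Theorem~\ref{thm:quadraticsuperclose} and the inverse estimate, and your three-term decomposition with the interpolation bound, the consistency bound from Theorem~\ref{thm:hessianconsistency} with $k=2$, and the boundedness--inverse-inequality--supercloseness chain for $\|H_h(u_I-u_h)\|_{0,\Omega}$ is exactly that analogous argument spelled out.
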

\begin{proof}
The proof is analogous to that of Theorem \ref{thm:hrlinearsuper}, utilizing Theorem \ref{thm:quadraticsuperclose} and the inverse estimate.
\end{proof}

\begin{remark}
The supercloseness result in Theorem \ref{thm:quadraticsuperclose} requires the mesh $\mathcal{T}_h$ to be strongly regular. However, this condition on $\mathcal{T}_h$ can be relaxed by decomposing it into two submeshes, $\mathcal{T}_{1,h}$ and $\mathcal{T}_{2,h}$, where $\mathcal{T}_{1,h}$ satisfies the strongly regular condition, while the total area of the triangles in $\mathcal{T}_{2,h}$ is $\mathcal{O}(h^{\sigma})$. A weaker supercloseness result can then be derived in a similar manner, with the rate depending on $\sigma$ \cite{huang2008quadraticsupercloseness}.
\end{remark}

\begin{remark}
What we want to emphasize is that the approach of using supercloseness to establish superconvergence results for recovery techniques is restricted to linear and quadratic elements when simplicial meshes are employed in the two-dimensional case. Specifically, Li \cite{li2004counterexample} proved that Lagrange interpolation and its associated finite element methods are not superclose in the $H^1$ norm for $P_k$ (simplicial) elements in $d$ dimensions, where $d \ge 2$ and $k \ge d + 1$. In particular, we don't expect the supercloseness results in two-dimensional case if we use cubic or higher-order elements.
\end{remark}

\subsection{Superconvergence analysis on translation invariant meshes}
In this subsection, we introduce a framework for analyzing the superconvergence or ultraconvergence of recovery techniques applicable to arbitrarily high-order elements on translation-invariant meshes. The main analysis tool is superconvergence by a difference quotient \cite{wahlbin1995superconvergencebook, zhang2005ppr, guo2017hessianrecovery}.

\subsubsection{Superconvergence of gradient recovery techniques}
In this subsubsection, we first consider the superconvergence of the gradient recovery operator $G_h$. As demonstrated in the first two examples, the gradient recovery operator is a second-order finite difference scheme. The key observation is that it can be viewed as a difference quotient. Considering the uniform mesh with a regular pattern from Example 1, and substituting $z_0$ with $z$, the recovered gradient $(G_h^x u_h)(z)$ can be reformulated as
\begin{equation*}
	\left(G_h^x u\right)\left(z\right)=\frac{1}{6 h}\left(2 u_1+ u_2 - u_3 -2u_4- u_5+u_6\right) . 
\end{equation*}
Denote $\phi_j$ as the nodal shape functions at $z_j$ for $j = 0, \cdots, 6$.  Then, $(G_h^x u)(z_0)$ can be expressed as 
\begin{equation*}
	\begin{aligned}
		\left(G_h^x u\right)\left(z\right)\phi_0(x,y) = & \frac{1}{6 h}\left(2 u_1\phi_1(x+h, y) + u_2 \phi_1(x+h, y+h) - u_3\phi_1(x, y+h) \right. \\
		&\left. -2u_4\phi_1(x-h, y)- u_5\phi_1(x-h, y-h) + u_6\phi_1(x, y-h) \right).
	\end{aligned}
\end{equation*}
The translations are in the directions of $\ell_1=\pm(1,0)$, $\ell_2=\pm(0,1)$, and $\ell_3 = \pm(1,1)$. In terms of these translation directions, the recovered gradient in the $x$-direction can be represented as
\begin{equation}\label{equ:grdifferencequotient}
	\left(G_h^x u\right)\left(z\right)=\sum_{|\nu|\le M}\sum_{i=1}^{3}C^{(i)}_{\nu, h}u_h(z + \nu h \ell_i).
\end{equation}

Based on this observation, we can prove the following results.
\begin{theorem}\label{thm:gr_translationinvariant_super}
Suppose  all the coefficients in the bilinear operator $a(\cdot, \cdot)$ are constant.  Let $\Omega_1 \subset \subset \Omega$ be separated by $d = O(1)$, and let the finite element space $S_h$, which includes piecewise polynomials of degree $k$, be translation invariant in the directions required by the gradient recovery operator $G_h$ on $\Omega_1$. Additionally, let $u \in W^{k+2}_{\infty}(\Omega)$. Then, for any interior region $\Omega_0 \subset \subset \Omega_1$, we have the following error estimate:
\begin{equation}
    \|G_{h}(u - u_h)\|_{0, \infty, \Omega_0} \lesssim \left(\ln\frac{1}{h}\right)^{\bar{r}} h^{k+1} \|u\|_{k+2, \infty, \Omega} + \|u - u_h\|_{-s, q, \Omega},
    \label{equ:error}
\end{equation}
for some $s \ge 0$ and $q \ge 1$. Here, $\bar{r} = 1$ for linear elements and $\bar{r} = 0$ for higher-order elements.
\end{theorem}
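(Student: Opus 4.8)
\emph{Overall strategy.} The plan is to exploit the translation invariance to realize $G_h$ on $\Omega_1$ as a \emph{fixed} finite‑difference (divided‑difference) operator acting on nodal values, to observe that this operator applied to $u_h$ is a local Galerkin approximation of the same operator applied to $u$ (here the constant‑coefficient hypothesis is essential), and finally to invoke the interior maximum‑norm error estimates of Schatz–Wahlbin, from which both the logarithmic factor and the negative‑norm pollution term in \eqref{equ:error} emerge.

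\emph{Step 1: reduction to a difference quotient.} Since $G_hv_h$ depends only on the nodal values of $v_h$, extend $G_h$ to smooth functions by $G_hu:=G_hu_I$, so that $G_h(u-u_h)=G_h(u_I-u_h)$. On $\Omega_1$ the space $S_h$ is translation invariant in every direction $\ell_i$ used by the recovery, so by the computation leading to \eqref{equ:grdifferencequotient}, for every interior node $z$ one has $(G_h^xv)(z)=\sum_{|\nu|\le M}\sum_i C^{(i)}_{\nu,h}\,v(z+\nu h\ell_i)=:(D_hv)(z)$ with a single divided‑difference operator $D_h$ independent of $z$ (similarly for the $y$‑component). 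By Theorem \ref{thm:pp}, $D_h$ reproduces $\partial_x$ on $\mathbb{P}_{k+1}$. Because $G_h(u-u_h)\in S_h\times S_h$ is (a convex combination of) its vertex values via \eqref{equ:recoveredgrad}, \eqref{equ:edgegr}, \eqref{equ:elemgr}, it follows that $\|G_h(u-u_h)\|_{0,\infty,\Omega_0}\lesssim\max_{z_i}|D_h(u-u_h)(z_i)|$, the maximum over nodes of elements meeting $\overline{\Omega_0}$; hence it suffices to estimate $D_h(u-u_h)$ on a slightly enlarged interior region.

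\emph{Step 2: $D_hu_h$ is a local Galerkin approximation of $D_hu$.} Let $D_h^{*}$ denote the adjoint divided difference (the same combination of the opposite shifts, divided by $h$). For $\chi\in S_h^{\mathrm{comp}}$ with support compactly inside $\Omega_1$, translation invariance gives $D_h^{*}\chi\in S_h$, its support having grown only by $O(Mh)$. Because all coefficients in $a(\cdot,\cdot)$ are constant, a change of variables in the integrals yields $a(T^{\ell}_{h}w,\chi)=a(w,T^{-\ell}_{h}\chi)$ (boundary terms vanish by compact support), and hence $a(D_hw,\chi)=a(w,D_h^{*}\chi)$. Combining with the Galerkin orthogonality \eqref{equ:orth} applied to the admissible test function $D_h^{*}\chi$,
\begin{equation*}
a\bigl(D_h(u-u_h),\chi\bigr)=a\bigl(u-u_h,\,D_h^{*}\chi\bigr)=0,\qquad\forall\,\chi\in S_h^{\mathrm{comp}}\text{ supported compactly in }\Omega_1 .
\end{equation*}
Thus on $\Omega_1$, $D_hu_h$ coincides with an element of $S_h$ and is interior‑Galerkin‑orthogonal to the smooth ``exact solution'' $D_hu$.

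\emph{Step 3: interior estimate and collection of terms.} Apply the interior maximum‑norm estimate of Schatz and Wahlbin (\cite{wahlbin1995superconvergencebook}; cf.\ also \cite{guo2017hessianrecovery}) to $D_h(u-u_h)=D_hu-D_hu_h$ on $\Omega_0\subset\subset\Omega_1$:
\begin{equation*}
\|D_h(u-u_h)\|_{0,\infty,\Omega_0}\lesssim\Bigl(\ln\tfrac1h\Bigr)^{\bar r}\inf_{\chi\in S_h}\|D_hu-\chi\|_{0,\infty,\Omega_1}+\|D_h(u-u_h)\|_{-s,q,\Omega_1},
\end{equation*}
the logarithm appearing only for $P_1$ (so $\bar r=1$) and being absent otherwise. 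A divided difference costs one derivative, so $\|D_hu\|_{k+1,\infty,\Omega_1}\lesssim\|u\|_{k+2,\infty,\Omega}$, whence standard interpolation gives $\inf_{\chi}\|D_hu-\chi\|_{0,\infty,\Omega_1}\lesssim h^{k+1}\|u\|_{k+2,\infty,\Omega}$; likewise $\|D_h(u-u_h)\|_{-s,q,\Omega_1}\lesssim\|u-u_h\|_{-s,q,\Omega}$ after choosing $s$ a few units larger to absorb the loss from $D_h$. Together with Step 1 this yields \eqref{equ:error}.

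\emph{Main obstacle.} The technical heart is Steps 2–3: one must (i) verify rigorously that on a translation‑invariant mesh $G_h$ is exactly a translation‑commuting divided difference, so that $D_h^{*}$ maps $S_h^{\mathrm{comp}}$ into $S_h$ and the commutation $a(D_hw,\chi)=a(w,D_h^{*}\chi)$ is legitimate — the constant‑coefficient hypothesis is indispensable here, since for variable coefficients uncontrolled commutator terms would appear — and (ii) invoke the Schatz–Wahlbin interior estimate with the correct power $\bar r$ of the logarithm, which ultimately traces back to the logarithmic factor in the $L^\infty$ Green's‑function bound for piecewise‑linear elements in two dimensions and its disappearance for higher order. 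The bookkeeping of the nested regions $\Omega_0\subset\subset\Omega_1\subset\subset\Omega$ (each shift enlarges supports by $O(h)$, and a little room is also needed for the cutoff functions in the interior estimate) is routine but must be tracked carefully.
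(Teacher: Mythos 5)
Your proposal is correct and follows essentially the same route as the paper: represent $G_h$ on the translation-invariant region as a fixed difference quotient, use the constant-coefficient hypothesis to move the adjoint shifts onto compactly supported test functions and obtain interior Galerkin orthogonality, and then invoke the Schatz--Wahlbin interior maximum-norm estimate, bounding the best-approximation term via the polynomial-preserving property. The only (harmless) divergence is in the pollution term: you absorb the difference quotient directly into the negative-norm index, whereas the paper first reduces $\|G_h^x(u-u_h)\|_{-s,q,\Omega_1}$ by duality to $\|u-u_h\|_{0,\infty,\Omega_1+Mh}$ and applies the interior estimate a second time; both yield the stated bound up to relabeling $s$.
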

\begin{proof}
By the definition of the translation operator in \eqref{equ:translate}, we have
	\begin{equation}
	 T^{\ell}_{\tau h} u_h = 	u_h(z + \nu h \ell).
	\end{equation}
Since the coefficients in the bilinear form $a(\cdot, \cdot)$ are constant, this implies that
\begin{equation*}
	a( T^{\ell}_{\tau h}(u-u_h), v) = a(u-u_h,  T^{\ell}_{-\tau h}v) =  a(u-u_h,  (T^{\ell}_{\tau h})^*v).
\end{equation*}
As in \eqref{equ:grdifferencequotient}, $G_h^x$ is a linear combination of the translation of difference quotients. Then, we have 
\begin{equation*}
	a(G_h^x(u-u_h), v) = a(u-u_h, (G_h^x)^*v) = 0, \quad \forall  v \in S_{h}^{\text{comp}}(\Omega_1),
\end{equation*}
where we have used the fact that $(G_h^x)^*v  \in S_{h}^{\text{comp}}(\Omega_2)$ to get the last equality. 
Therefore, by \cite[Theorem 5.5.2]{wahlbin1995superconvergencebook} with $F \equiv 0$, we can deduce that
\begin{equation}
	\begin{aligned}
		\|G_h^x(u-u_h)\|_{0, \infty, \Omega_0} \lesssim & \left(\ln d/h \right)^{\bar{r}}\min_{v\in S_h}
		\|G_h^xu - v\|_{0, \infty,  \Omega_1} \\ 
		& +  d^{-s-2/q}\|G_h^x(u-u_h)\|_{-s, q, \Omega_1},
	\end{aligned}
\end{equation}
where $\bar{r} = 1$ for linear elements and $\bar{r} = 0$ for higher-order elements. By the standard approximation theory \cite{brenner2008fembook, ciarlet2002fembook}, the first term in the previous equation can be bounded as
\begin{equation*}
	\min_{v\in S_h}
		\|G_h^xu - v\|_{0, \infty,  \Omega_1} \lesssim h^{k+1}|u|_{k+2, \infty, \Omega_1}. 
\end{equation*}
By the definition of the negative norm, we have
\begin{equation*}
	\|G_h^x(u-u_h)\|_{-s, q, \Omega_1} = \sup_{\phi \in C^{\infty}_0(\Omega_1), \| \phi \|_{s, q', \Omega_1}=1}
	(G_h^x(u-u_h), \phi),
\end{equation*}
where $1/q + 1/q' = 1$. Using the fact that $G_h$ is a linear combination of translations, we have
\begin{equation*}
	\begin{aligned}
		((G_h^x(u-u_h), \phi) & =  (u-u_h, (G_h^x)^*\phi) 
		   \lesssim \|u-u_h\|_{0, \infty, \Omega_1+Mh}\|(G_h^x)^*\phi) \|_{0, 1, \Omega_1+Mh} 
		  \lesssim \|u-u_h\|_{0, \infty, \Omega_1+Mh},
	\end{aligned}
\end{equation*}
where $\Omega_1+Mh$ is the subdomain extending $Mh$ from $\Omega_1$,  and we have used the fact $\|(G_h^x)^*\phi) \|_{0, 1, \Omega_1+Mh}$ is bounded by a constant independent of $h$. 
Applying Theorem 5.5.2 in \cite{wahlbin1995superconvergencebook} again, we have 
\begin{equation*}
	\begin{aligned}
		\|u-u_h\|_{0, \infty, \Omega_1+Mh} &\lesssim \left(\ln d/h \right)^{\bar{r}}\min_{v\in S_h}
		\|u - v\|_{0, \infty,  \Omega} 
		 +  d^{-s-2/q}\|u-u_h\|_{-s, q, \Omega} \\
		 &\lesssim h^{k+1} \left(\ln d/h \right)^{\bar{r}}
		\|u \|_{k+1, \infty,  \Omega} 
		 +  d^{-s-2/q}\|u-u_h\|_{-s, q, \Omega}.
	\end{aligned}
\end{equation*}
If the separation parameter $d = \mathcal{O}(1)$, combining the above estimates gives
\begin{equation}
	\|G_h^x(u-u_h)\|_{0, \infty, \Omega_0}\lesssim h^{k+1} \left(\ln 1/h \right)^{\bar{r}}
		\|u \|_{k+2, \infty,  \Omega} 
		 +  \|u-u_h\|_{-s, q, \Omega}.
\end{equation}
Repeating the same process, we obtain a similar estimate for $G_h^y$. Therefore, our proof is completed by replacing $G^x_h$ with $G_h$.
\end{proof}

With the above preparation, we now present our main superconvergence results for the gradient recovery operator $G_h$:
\begin{theorem}\label{thm:gr_super_final}
Suppose  all the coefficients in the bilinear operator $a(\cdot, \cdot)$ are constant. Let $\Omega_1 \subset \subset \Omega$ be separated by $d = O(1)$, and let the finite element space $S_h$, which includes piecewise polynomials of degree $k$, be translation invariant in the directions required by the gradient recovery operator $G_h$ on $\Omega_1$. Additionally, let $u \in W^{k+2}_{\infty}(\Omega)$. Then, for any interior region $\Omega_0 \subset \subset \Omega_1$, we have the following error estimate:
  \begin{equation}
   \|\nabla u - G_{h}u_h\|_{0, \infty, \Omega_0}
  \lesssim \left(\ln\frac{1}{h}\right)^{\bar{r}}
  h^{k+1}\|u\|_{k+2,\infty,\Omega}
  + \|u-u_h\|_{-s, q, \Omega},
    \label{equ:main}
  \end{equation}
  for some $s\ge 0$ and $q\ge 1$.
  \label{thm:ultra1}
\end{theorem}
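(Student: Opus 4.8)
The plan is to deduce the estimate from a single triangle-inequality split, since the two genuinely hard ingredients are already available. I would write
\[
\|\nabla u - G_h u_h\|_{0,\infty,\Omega_0}
\le \|\nabla u - G_h u\|_{0,\infty,\Omega_0} + \|G_h(u - u_h)\|_{0,\infty,\Omega_0},
\]
using the linearity of $G_h$ together with the fact that $G_h$ acts on any continuous function through its nodal values, so $G_h u$ is well defined (and in fact coincides with $G_h u_I$). The second term on the right is exactly the quantity controlled by Theorem \ref{thm:gr_translationinvariant_super}, which supplies $\|G_h(u-u_h)\|_{0,\infty,\Omega_0}\lesssim (\ln 1/h)^{\bar r}h^{k+1}\|u\|_{k+2,\infty,\Omega}+\|u-u_h\|_{-s,q,\Omega}$. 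Hence the only thing left is to show that the first term, the \emph{consistency} term, is $O(h^{k+1})$ times a norm of $u$.

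For the consistency term I would repeat the argument from the proof of Theorem \ref{thm:grsuperconvergence}: insert the interpolant $(\nabla u)_I$ of $\nabla u$,
\[
\|\nabla u - G_h u\|_{0,\infty,\Omega_0}
\le \|\nabla u - (\nabla u)_I\|_{0,\infty,\Omega_0} + \|(\nabla u)_I - G_h u\|_{0,\infty,\Omega_0}.
\]
The first piece is $\lesssim h^{k+1}|u|_{k+2,\infty,\Omega}$ by the standard Lagrange interpolation estimate applied to $\nabla u$. The second piece lies in $S_h\times S_h$, so its $L^\infty$ norm is controlled by the maximum of its nodal values, $\max_{z_i\in\mathcal{N}_h}|(\nabla u)(z_i)-(G_h u)(z_i)|$, and each such nodal discrepancy is $\lesssim h^{k+1}|u|_{k+2,\infty,\Omega_{z_i}}$ by the consistency estimate of Theorem \ref{thm:consistency}. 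Taking the maximum over the nodes meeting $\Omega_0$ gives $\|\nabla u - G_h u\|_{0,\infty,\Omega_0}\lesssim h^{k+1}\|u\|_{k+2,\infty,\Omega}$. I would emphasize that Theorem \ref{thm:consistency} is purely local, so this step needs no translation invariance; the logarithmic factor $\bar r$ and the negative-norm term enter the final bound only through Theorem \ref{thm:gr_translationinvariant_super}.

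Combining the two bounds yields \eqref{equ:main} at once. I do not expect a serious obstacle here: essentially all of the analytic weight is carried by Theorem \ref{thm:gr_translationinvariant_super}, whose proof already absorbs the interior localization via Wahlbin's interior maximum-norm estimate and the commutation of $G_h^x, G_h^y$ with $a(\cdot,\cdot)$ through translations. The only points requiring a little care are bookkeeping: interpreting $G_h u$ via nodal values so that linearity applies; checking that the nested-subdomain hypothesis $\Omega_0\subset\subset\Omega_1$ is propagated from Theorem \ref{thm:gr_translationinvariant_super}; and the harmless passage from the local seminorms $|u|_{k+2,\infty,\Omega_{z_i}}$ to the global norm $\|u\|_{k+2,\infty,\Omega}$, which is legitimate since each patch $\Omega_{z_i}$ sits inside $\Omega$.
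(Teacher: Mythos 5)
Your proposal is correct and follows essentially the same route as the paper: once you expand your two-step triangle inequality, you obtain exactly the paper's three-term decomposition $(\nabla u - (\nabla u)_I) + ((\nabla u)_I - G_h u) + G_h(u - u_h)$, with each piece bounded by the same tools (standard interpolation theory, the nodal consistency estimate of Theorem \ref{thm:consistency}, and Theorem \ref{thm:gr_translationinvariant_super} for the Galerkin part). No gaps.
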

\begin{proof}
	We can decompose $\nabla u - G_hu_h$ as 
	\begin{equation*}
		\nabla u - G_hu_h = (\nabla u - (\nabla u)_I) + ((\nabla u)_I - G_hu) + (G_hu - G_hu_h),
	\end{equation*}
	where $ (\nabla u)_I \in S_h \times S_h$ is the interpolation of $\nabla u$ in $S_h$.  By the standard interpolation theory \cite{brenner2008fembook, ciarlet2002fembook}, we have 
	\begin{equation*}
		\|(\nabla u - (\nabla u)_I\|_{0,\infty,\Omega}\lesssim h^{k+1}|\nabla u|_{k+1, \infty, \Omega} \lesssim h^{k+1}|\nabla u|_{k+2, \infty, \Omega}.
	\end{equation*}
	For the second term, we have
	  \begin{equation}
    \begin{split}
    \|(\nabla u)_I - G_h u\|_{0, \infty, \Omega_0}=&
    \|\sum_{i = 1}^{| \mathcal{N}_h|}((\nabla  u)(z_i) - (G_hu)(z_i))\phi_{i}\|
    _{0, \infty, \Omega_0}\\
    \lesssim&
    \max_{z_i\in \mathcal{N}_h \cap \Omega_0} |(\nabla u)(z_i) - (G_hu)(z_i)|\\
    \lesssim&
    h^{k+1}|u|_{k+2, \infty, \Omega},
    \end{split}
    \label{equ:mainsecond}
  \end{equation}
  where we have used Theorem \ref{thm:consistency} in the last inequality.  Combing the above estimates and Theorem \ref{thm:gr_translationinvariant_super}, we finish the proof. 
\end{proof}

\begin{remark}
Theorem \ref{thm:gr_super_final} provides a superconvergence result under the condition
\[
\|u-u_h\|_{-s, q, \Omega} \lesssim h^{k+\sigma}, \quad \sigma > 0.
\]
For negative norm estimates, the reader is referred to \cite{nitsche1974negativenorm}.
\end{remark}

\begin{remark}
	For the SPR, Zienkiewicz and Zhu \cite{zhu1992spr2} demonstrated that SPR achieves $\mathcal{O}(h^4)$ gradient recovery at mesh vertices for a uniform mesh of regular patterns. However, SPR only provides $\mathcal{O}(h^2)$ recovery at edge centers and fails to achieve superconvergence for quadratic elements over the entire patch. Numerical results in \cite{zhang2005ppr} indicate that PPR achieves $\mathcal{O}(h^4)$ gradient recovery at both vertices and edge centers. By applying quadratic interpolation at vertices and edge centers, this approach guarantees $\mathcal{O}(h^3)$ gradient recovery.
\end{remark}

\subsubsection{Ultraconvergence of Hessian recovery techniques}
In this subsubsection, we present the ultraconvergence analysis of the Hessian recovery operator $H_h$ on translation invariant meshes. Similar to the gradient recovery case, the Hessian recovery operator $H_h$ can also be interpreted as a difference quotient, as demonstrated in Example 3. Analogously to the gradient recovery case, the Hessian recovery operator $H_h$ can be represented as 
\begin{equation}
	(H^{ab}_h u_h)(z) = \sum_{|\nu| < M} \sum_{i=1}^6 C_{\nu, h}^i u_h\left(z + \nu h \ell_i\right) = \sum_{|\nu| < M} \sum_{i=1}^6 C_{\nu, h}^i T_{\nu \tau}^{\ell_i} u_h(z),
\end{equation}
where $a, b \in \{x, y\}$, and $\ell_i$ are given directions.

Based on the representation in terms of difference quotients, we first establish the following result for the Hessian recovery operator $H_h$:
\begin{theorem}\label{thm:hr_tr_analysis}
Suppose  all the coefficients in the bilinear operator $a(\cdot, \cdot)$ are constant. Let $\Omega_1 \subset\subset \Omega$ denote a subdomain separated from the boundary by a distance $d = O(1)$. Assume that the finite element space $S_h$, consisting of piecewise polynomials of degree $k$, is translation invariant along the directions necessary for the Hessian recovery operator $H_h$ to function within $\Omega_1$. If $u \in W^{k+3}_{\infty}(\Omega)$, then, on any interior region $\Omega_0 \subset\subset \Omega_1$, we have
\begin{equation}
  \|H_{h}(u-u_h)\|_{0, \infty, \Omega_0}
  \lesssim \left(\ln\frac{1}{h}\right)^{\bar{r}}
  h^{k+1}\|u\|_{k+3, \infty, \Omega}
  + \|u-u_h\|_{-s, q, \Omega},
  \label{equ:error1}
\end{equation}
for some $s \ge 0$ and $q \ge 1$. Here, $\bar{r} = 1$ for linear elements, and $\bar{r} = 0$ for higher-order elements.
\end{theorem}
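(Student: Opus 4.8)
The plan is to transcribe, component by component, the argument of Theorem~\ref{thm:gr_translationinvariant_super}, using the difference-quotient representation $(H_h^{ab}u_h)(z)=\sum_{|\nu|<M}\sum_{i=1}^{6}C_{\nu,h}^{i}T_{\nu\tau}^{\ell_i}u_h(z)$, $a,b\in\{x,y\}$, recorded just above. Since all coefficients of $a(\cdot,\cdot)$ are constant, each lattice translation obeys $a(T_{\nu\tau}^{\ell_i}(u-u_h),v)=a(u-u_h,(T_{\nu\tau}^{\ell_i})^{*}v)$, so by linearity $a(H_h^{ab}(u-u_h),v)=a(u-u_h,(H_h^{ab})^{*}v)$. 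The first task is the structural one: because $H_h$ is (componentwise) the composition $G_h\circ G_h$, its stencil spreads supports by roughly $2Mh$ and involves every direction required by \emph{both} applications of $G_h$; one must therefore check, using the assumed translation invariance on $\Omega_1$ together with the nested domains $\Omega_0\subset\subset\Omega_1\subset\subset\Omega_2\subset\subset\Omega$ (with $d=O(1)$), that $(H_h^{ab})^{*}v\in S_h^{\mathrm{comp}}(\Omega_2)$ whenever $v\in S_h^{\mathrm{comp}}(\Omega_1)$. Combined with the Galerkin orthogonality \eqref{equ:orth}, this yields $a(H_h^{ab}(u-u_h),v)=0$ for all $v\in S_h^{\mathrm{comp}}(\Omega_1)$; that is, the element $H_h^{ab}u_h\in S_h$ satisfies an interior orthogonality relation with respect to $H_h^{ab}u$.

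I would then apply \cite[Theorem~5.5.2]{wahlbin1995superconvergencebook} with $F\equiv0$ to $w=H_h^{ab}u-H_h^{ab}u_h$, obtaining
\begin{equation*}
  \|H_h^{ab}(u-u_h)\|_{0,\infty,\Omega_0}
  \lesssim \Bigl(\ln\tfrac{d}{h}\Bigr)^{\bar r}\min_{\chi\in S_h}\|H_h^{ab}u-\chi\|_{0,\infty,\Omega_1}
  + d^{-s-2/q}\|H_h^{ab}(u-u_h)\|_{-s,q,\Omega_1},
\end{equation*}
with $\bar r=1$ for linear elements and $\bar r=0$ for higher order. The approximation term is handled by comparing $H_h^{ab}u$ to the interpolant of the exact second derivative: $\min_\chi\|H_h^{ab}u-\chi\|_{0,\infty,\Omega_1}\le\|H_h^{ab}u-\partial_a\partial_b u\|_{0,\infty,\Omega_1}+\|\partial_a\partial_b u-(\partial_a\partial_b u)_I\|_{0,\infty,\Omega_1}$, where the first piece is $\lesssim h^{k+1}\|u\|_{k+3,\infty,\Omega_1}$ by the improved consistency of $H_h$ at interior nodes of a translation invariant mesh (Theorem~\ref{thm:hessianconsistency}) and the second is $\lesssim h^{k+1}\|u\|_{k+3,\infty,\Omega_1}$ by standard interpolation. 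The negative-norm term is bootstrapped exactly as in the gradient case: write it by duality, move the ($h$-independent, $L^1$-bounded) formal adjoint $(H_h^{ab})^{*}$ onto the test function to reduce to $\|u-u_h\|_{0,\infty,\Omega_1'}$ on a fixed $O(h)$-enlargement $\Omega_1'$ of $\Omega_1$, and apply \cite[Theorem~5.5.2]{wahlbin1995superconvergencebook} once more to $u-u_h$ itself, bounding this by $h^{k+1}(\ln 1/h)^{\bar r}\|u\|_{k+1,\infty,\Omega}+d^{-s-2/q}\|u-u_h\|_{-s,q,\Omega}$.

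Since $d=O(1)$, collecting all of the above yields \eqref{equ:error1} for each scalar component $H_h^{ab}$, and taking the maximum over $a,b\in\{x,y\}$ gives the stated estimate for the full recovered Hessian $H_h$. The main obstacle is the bookkeeping in the first paragraph: one must verify that the \emph{doubled} support of the composed operator still lies in $\Omega_2$ and that translation invariance is available in all directions entering $H_h$, not merely those of a single $G_h$ application — once this is secured, the rest is a routine transcription of the gradient-recovery proof, with the gain from the generic $h^k$ consistency rate to $h^{k+1}$ coming precisely from the symmetry-induced sharpening in Theorem~\ref{thm:hessianconsistency}.
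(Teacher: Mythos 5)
Your proposal is correct and follows essentially the same route as the paper's proof: the difference-quotient representation of $H_h^{ab}$ plus constant coefficients gives the interior orthogonality $a(H_h^{ab}(u-u_h),v)=0$ for $v\in S_h^{\mathrm{comp}}(\Omega_1)$, then \cite[Theorem~5.5.2]{wahlbin1995superconvergencebook} is applied twice, with the negative-norm term bootstrapped by duality exactly as in the gradient case. Your handling of the best-approximation term (triangle inequality through the consistency estimate of Theorem~\ref{thm:hessianconsistency}) and your explicit check that the composed stencil's support stays in $\Omega_2$ are slightly more careful than the paper's terse appeal to ``standard approximation theory,'' but they lead to the same bound.
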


\begin{proof}
	Since the bilinear form $a(\cdot, \cdot)$ is constant coefficient,  it follows that 
	\begin{equation*}
			a( T^{\ell}_{\tau h}(u-u_h), v) = a(u-u_h,  T^{\ell}_{-\tau h}v) =  a(u-u_h,  (T^{\ell}_{\tau h})^*v).
	\end{equation*}
	Note that $(T^{\ell}_{\tau h})^*v$ is in the finite element space by the definition and $H_h$ is a linear combination of $T^{\ell}_{\tau h}$. Then, we have 
	\begin{equation*}
	a(H_h^{xx}(u-u_h), v) = a(u-u_h, (H_h^{xx})^*v) = 0, \quad \forall  v \in S_{h}^{\text{comp}}(\Omega_1),
\end{equation*}
where the last equality is implied by Galerkin orthogonality. By \cite[Theorem 5.5.2]{wahlbin1995superconvergencebook} with $F \equiv 0$, we can deduce that
\begin{equation}
	\begin{aligned}
		\|H_h^{xx}(u-u_h)\|_{0, \infty, \Omega_0} \lesssim & \left(\ln d/h \right)^{\bar{r}}\min_{v\in S_h}
		\|H_h^{xx}u - v\|_{0, \infty,  \Omega_1} 
		 +  d^{-s-2/q}\|H_h^{xx}(u-u_h)\|_{-s, q, \Omega_1},
	\end{aligned}
\end{equation}
where $\bar{r} = 1$ for linear elements and $\bar{r} = 0$ for higher-order elements. By the standard approximation theory \cite{brenner2008fembook, ciarlet2002fembook}, the first term in the previous equation can be bounded as
\begin{equation*}
	\min_{v\in S_h}
		\|H_h^{xx}u - v\|_{0, \infty,  \Omega_1} \lesssim h^{k+1}|u|_{k+3, \infty, \Omega_1}. 
\end{equation*}
By the definition of the negative norm, we have
\begin{equation*}
	\|H_h^{xx}(u-u_h)\|_{-s, q, \Omega_1} = \sup_{\phi \in C^{\infty}_0(\Omega_1), \| \phi \|_{s, q', \Omega_1}=1}
	(H_h^{xx}(u-u_h), \phi),
\end{equation*}
where $1/q + 1/q’ = 1$. Using the same argument as gradient recovery case, we have 
\begin{equation*}
	\begin{aligned}
		&((H_h^{xx}(u-u_h), \phi)  =  (u-u_h, (H_h^{xx})^*\phi) \\
		   \lesssim  &\|u-u_h\|_{0, \infty, \Omega_1+Mh}\|(H_h^{xx})^*\phi) \|_{0, 1, \Omega_1+Mh} 
		  \lesssim \|u-u_h\|_{0, \infty, \Omega_1+Mh},
	\end{aligned}
\end{equation*}
where $\Omega_1+Mh$ is the subdomain extending $Mh$ from $\Omega_1$,  and we have used the fact $\|(H_h^{xx})^*\phi) \|_{0, 1, \Omega+Mh}$ is bounded by a constant independent of $h$. 
Applying Theorem 5.5.2 in \cite{wahlbin1995superconvergencebook} again, we have 
\begin{equation*}
	\begin{aligned}
		\|u-u_h\|_{0, \infty, \Omega_1+Mh} &\lesssim \left(\ln d/h \right)^{\bar{r}}\min_{v\in S_h}
		\|u - v\|_{0, \infty,  \Omega} 
		 +  d^{-s-2/q}\|u-u_h\|_{-s, q, \Omega} \\
		 &\lesssim h^{k+1} \left(\ln d/h \right)^{\bar{r}}
		\|u\|_{k+1, \infty,  \Omega} 
		 +  d^{-s-2/q}\|u-u_h\|_{-s, q, \Omega}.
	\end{aligned}
\end{equation*}
If the separation parameter $d = \mathcal{O}(1)$, combining the above estimates gives
\begin{equation}
	\|H_h^{xx}(u-u_h)\|_{0, \infty, \Omega_0}\lesssim h^{k+1}\left(\ln 1/h \right)^{\bar{r}}
		\|u\|_{k+3, \infty,  \Omega} 
		 +  \|u-u_h\|_{-s, q, \Omega}.
\end{equation}
Similarly, we can get the same estimate for $H_h^{xy}$, $H_h^{yx}$, and $H_h^{yy}$. Therefore, our proof is completed by replacing $H_h^{xx}$ with $H_h$.
\end{proof}

We are now in a perfect position to present the main ultraconvergence result for the Hessian recovery operator $H_h$.

\begin{theorem}\label{thm:hr_ultraconvergence}
Suppose  all the coefficients in the bilinear operator $a(\cdot, \cdot)$ are constant. Let $ \Omega_1 \subset\subset \Omega $ denote a subdomain separated from the boundary by a distance $ d = O(1) $. Assume that the finite element space $ S_h $, composed of piecewise polynomials of degree $ k $, is translation invariant along the directions necessary for the Hessian recovery operator $ H_h $ to operate effectively within $ \Omega_1 $. If $u \in W^{k+3}_{\infty}(\Omega)$, then, on any interior region $\Omega_0 \subset\subset \Omega_1$, we have
\begin{equation}
   \|Hu - H_{h}u_h\|_{0,\infty,\Omega_0}
  \lesssim  \left(\ln\frac{1}{h}\right)^{\bar{r}}
  h^{k+1}\|u\|_{k+3,\infty,\Omega}
  + \|u-u_h\|_{-s, q, \Omega},
    \label{equ:main1}
\end{equation}
for certain parameters $ s \geq 0 $ and $ q \geq 1 $.
  \label{thm:ultra}
\end{theorem}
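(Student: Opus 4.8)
The plan is to follow the template of the proof of Theorem~\ref{thm:gr_super_final}, splitting the error into an interpolation piece, a nodal consistency piece, and a discrete piece that is precisely the quantity controlled by Theorem~\ref{thm:hr_tr_analysis}. Writing $(Hu)_I \in (S_h)^{2\times 2}$ for the componentwise $S_h$-interpolant of the Hessian $Hu$, I would begin from the decomposition
\begin{equation*}
  Hu - H_h u_h = \bigl(Hu - (Hu)_I\bigr) + \bigl((Hu)_I - H_h u\bigr) + \bigl(H_h u - H_h u_h\bigr).
\end{equation*}
The first term is dispatched by classical interpolation theory \cite{brenner2008fembook, ciarlet2002fembook}: the entries of $Hu$ are second derivatives of $u$, and since $u \in W^{k+3}_{\infty}(\Omega)$,
\begin{equation*}
  \|Hu - (Hu)_I\|_{0,\infty,\Omega} \lesssim h^{k+1}|Hu|_{k+1,\infty,\Omega} \lesssim h^{k+1}|u|_{k+3,\infty,\Omega}.
\end{equation*}

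For the second term I would expand $(Hu)_I - H_h u = \sum_{i}\bigl((Hu)(z_i) - (H_h u)(z_i)\bigr)\phi_i$, so that its $L^{\infty}(\Omega_0)$ norm is bounded (up to a constant) by $\max_{z_i \in \mathcal{N}_h \cap \Omega_0}\bigl|(Hu)(z_i) - (H_h u)(z_i)\bigr|$. Because $\Omega_0 \subset\subset \Omega_1$, every such node together with its recovery patch lies in the translation-invariant region, and $z_i$ is a symmetric center of the involved nodes; hence the sharp nodal consistency estimate of Theorem~\ref{thm:hessianconsistency} applies and yields $\bigl|(Hu)(z_i) - (H_h u)(z_i)\bigr| \lesssim h^{k+1}|u|_{k+3,\infty,\Omega_{z_i}}$ (using uniformly the bound valid for odd $k$, which already suffices for the stated rate), so this term is $\lesssim h^{k+1}|u|_{k+3,\infty,\Omega}$.

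The third term, $\|H_h(u-u_h)\|_{0,\infty,\Omega_0}$, is exactly the object estimated in Theorem~\ref{thm:hr_tr_analysis}, which delivers the bound $(\ln(1/h))^{\bar r}\,h^{k+1}\|u\|_{k+3,\infty,\Omega} + \|u-u_h\|_{-s,q,\Omega}$ with $\bar r = 1$ for linear elements and $\bar r = 0$ otherwise. Summing the three contributions and absorbing the two clean $h^{k+1}$ terms into the logarithmically weighted one gives \eqref{equ:main1}, completing the argument.

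As for where the difficulty sits: all the genuine analytic work — the interior maximum-norm a priori estimate of Wahlbin type applied to the difference-quotient representation of $H_h$, together with the commutation $a(T^{\ell}_{\tau h}\,\cdot\,,v) = a(\cdot\,,(T^{\ell}_{\tau h})^{*}v)$ that uses constant coefficients, and the negative-norm bootstrap — has already been carried out in Theorem~\ref{thm:hr_tr_analysis}, so the present proof is essentially bookkeeping. If one wanted a fully self-contained treatment, the only point requiring care is the uniform applicability of the sharp consistency bound: one must check that every node within $\Omega_0$, and within a fixed-width collar around it used implicitly through the patches $\Omega_{z_i}$, is a symmetric center of a translation-invariant stencil. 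This is precisely where the interior localization $\Omega_0 \subset\subset \Omega_1$ and the translation-invariance hypothesis on $S_h$ are simultaneously invoked.
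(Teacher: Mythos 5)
Your proposal is correct and follows essentially the same route as the paper's own proof: the identical three-term decomposition via $(Hu)_I$, interpolation theory for the first term, the nodal consistency estimate of Theorem \ref{thm:hessianconsistency} for the second, and Theorem \ref{thm:hr_tr_analysis} for the third. Your added remark about verifying that each node in $\Omega_0$ is a symmetric center of a translation-invariant stencil is a point the paper passes over silently, but it does not change the argument.
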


\begin{proof}
	Analogue to gradient recovery case, we decompose $Hu - H_hu_h$ as 
	\begin{equation*}
		H u - H_hu_h = (H u - (Hu)_I) + ((Hu)_I - H_hu) + (H_hu - H_hu_h),
	\end{equation*}
	where $ (Hu)_I \in S_h^4$ is the interpolation of $\nabla u$ in $S_h$.  Using  standard interpolation theory \cite{brenner2008fembook, ciarlet2002fembook}, we can obtain 
	\begin{equation*}
		\|Hu - (H u)_I\|_{0,\infty,\Omega}\lesssim h^{k+1}|Hu|_{k+1, \infty, \Omega} \lesssim h^{k+1}| u|_{k+3, \infty, \Omega}.
	\end{equation*}
	For the second term, we have
	  \begin{equation}
    \begin{split}
    \|(Hu)_I - H_h u\|_{0, \infty, \Omega_0}=&
    \|\sum_{i = 1}^{| \mathcal{N}_h|}((H  u)(z_i) - (H_hu)(z_i))\phi_{i}\|
    _{0, \infty, \Omega_0}\\
    \lesssim&
    \max_{z_i\in \mathcal{N}_h \cap \Omega_0} |(H u)(z_i) - (H_hu)(z_i)|\\
    \lesssim&
    h^{k+1}|u|_{k+3, \infty, \Omega},
    \end{split}
    \label{equ:mainsecond3}
  \end{equation}
  where we have used Theorem \ref{thm:hessianconsistency} in the last inequality.  Combing the above estimates and Theorem \ref{thm:hr_tr_analysis}, we conclude the proof.
\end{proof}

\begin{remark}
Theorem \ref{thm:hr_ultraconvergence} presents an ultraconvergence result under the condition
\[
\|u-u_h\|_{-s, q, \Omega} \lesssim h^{k+\sigma}, \quad \sigma > 0.
\]
\end{remark}

\begin{remark}
For a general $k$th-order element, the piecewise Hessian matrix achieves a convergence rate of $\mathcal{O}(h^{k-1})$. Theorems \ref{thm:hrlinearsuper} and \ref{thm:hrquadsuper} demonstrate that the recovered Hessian matrix $H_h u_h$ is superconvergent to the exact Hessian matrix $H u$ at a rate of $\mathcal{O}(h^k)$ on mildly structured meshes. When the mesh is translation invariant, Theorem \ref{thm:hr_ultraconvergence} shows that a convergence rate of $\mathcal{O}(h^{k+1})$ can be achieved. This result constitutes ultraconvergence, that is, two orders higher than the optimal convergence rate.
\end{remark}

\subsection{Superconvergence analysis of finite element methods for interface problems}
In this subsection, we develop the superconvergence theory for gradient recovery methods applied to interface problems. For clarity and conciseness, we restrict our analysis to the use of linear elements. The governing model equation for this subsection is given by
\begin{subequations}\label{equ:interface}
 \begin{align}
  -\nabla \cdot (\beta(z) \nabla u(z)) &= f(z),  \quad \text{in } \Omega^-\cup\Omega^+, \label{equ:interface_equ}\\
   u & = 0, \quad\quad\,\,  \text{on } \partial\Omega, \label{equ:interface_bnd}\\
      \llbracket u\rrbracket &=0,  \quad\quad\,\,  \text{on } \Gamma, \label{equ:interface_valuejump}\\
        \llbracket \beta \partial_n u \rrbracket&= g,   \quad\quad\,\,  \text{on } \Gamma, \label{equ:interface_fluxjump}
\end{align}
\end{subequations}
where $\partial_n u = (\nabla u) \cdot n$, and $ n $ denotes the unit outward normal vector along the interface $\Gamma$. The jump operator $\llbracket w \rrbracket$ along $\Gamma$ is defined by
\begin{equation}\label{eq:jump}
\llbracket w\rrbracket = w^+ - w^-,
\end{equation}
where $ w^{\pm} = w|_{\Omega^{\pm}} $ represents the restriction of $w$ to $\Omega^{\pm}$.
The diffusion coefficient $ \beta(z) \ge \beta_0  > 0$ is a piecewise smooth function expressed as
\begin{equation}
\beta(z) =
\left\{
\begin{array}{ccc}
    \beta^-(z), &  \text{if } z = (x, y) \in \Omega^-, \\
   \beta^+(z),  &   \text{if } z = (x, y) \in \Omega^+,\\
\end{array}
\right.
\end{equation}
exhibiting a finite discontinuity across the interface $ \Gamma $.

The variational formulation of  \eqref{equ:interface_equ}--\eqref{equ:interface_fluxjump} is to find $u \in H^1_0(\Omega)$ such that
\begin{equation}\label{equ:interface_model}
(\beta\nabla u, \nabla v) = (f, v) - \langle g, v \rangle, \quad \forall v \in H^1_0(\Omega),
\end{equation}
where $(\cdot, \cdot)$ and $\langle \cdot, \cdot \rangle$ denote the standard $L_2$-inner products in the spaces $L^2(\Omega)$ and $L^2(\Gamma)$, respectively.  By the positivity of the coefficients, the variational problem \eqref{equ:interface_model} admits a unique solution.

\subsubsection{Superconvergence of body-fitted finite element methods}
Let $\mathcal{T}_h$ denote a body-fitted triangulation of $\Omega$, where each triangle $K \in \mathcal{T}_h$ falls into one of the following
three categories:
\begin{enumerate}
\item $K \subset \overline{\Omega^-}$;
\item $K \subset \overline{\Omega^+}$;
\item $K \cap \Omega^- \neq \emptyset$ and $K \cap \Omega^+ \neq \emptyset$, in which case two vertices of $K$ lie on $\Gamma$.
\end{enumerate}

Let $S_h$ be the continuous finite element space defined on $\mathcal{T}_h$, and let $S_{h,0}$ be the subspace of $S_h$ with homogeneous boundary conditions. The body-fitted finite element approximation of the variational problem \eqref{equ:interface_model} is to find $u_h \in S_{h,0}$ such that
\begin{equation}\label{equ:interface_bffem}
(\beta \nabla u_h, \nabla v_h) = (f, v_h)  - \langle g, v_h \rangle_{\Gamma}, \quad \forall v_h \in S_{h,0}.
\end{equation}
As in Subsection \ref{ssec:rt}, the solution $u_h$ can be considered as comprising two parts: $u_h^- \in S_h^-$ and $u_h^+ \in S_h^+$. 

For the body-fitted finite element method, \cite{guo2018grbfem} proves the following supercloseness results:
\begin{theorem}\label{thm:bffem_supercloseness}
Suppose the body-fitted mesh satisfies the Condition $(\sigma, \alpha)$. If $u \in H^3(\Omega^- \cup \Omega^+) \cap W^{2, \infty}(\Omega^- \cup \Omega^+)$, then for any $v_h \in S_{h, 0}$, the following estimate holds:
\begin{equation}\label{equ:superclose}
 \begin{split}
 \|\nabla(u_I - u_h)\|_{0, \Omega} \lesssim &\, h^{1 + \rho} \left( \|u\|_{3, \Omega^- \cup \Omega^+} + \|u\|_{2, \infty, \Omega^- \cup \Omega^+} \right) 
  + h^{\frac{3}{2}} \left( \|u\|_{2, \infty, \Omega^- \cup \Omega^+} + \|g\|_{0, \infty, \Gamma} \right),
\end{split}
\end{equation}
where $\rho = \min(\alpha, \frac{\sigma}{2}, \frac{1}{2})$ and $u_I \in S_h$ denotes the interpolation of $u$.
\end{theorem}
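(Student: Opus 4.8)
The plan is to reduce the supercloseness estimate to a weak (dual-tested) interpolation bound and then dissect the mesh. Write $a(w,v) := (\beta\nabla w,\nabla v)$; since $\beta\ge\beta_0>0$, the form is coercive on $S_{h,0}$, so $\beta_0\|\nabla(u_I-u_h)\|_{0,\Omega}^2\le a(u_I-u_h,u_I-u_h)$ (note $u_I\in S_{h,0}$ because $u|_{\partial\Omega}=0$). As $u$ solves \eqref{equ:interface_model} and $u_h$ solves \eqref{equ:interface_bffem} with the same data, Galerkin orthogonality gives $a(u-u_h,v_h)=0$ for all $v_h\in S_{h,0}$, hence $a(u_I-u_h,u_I-u_h)=a(u_I-u,u_I-u_h)$. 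Therefore it suffices to prove that $|a(u_I-u,v_h)|\lesssim R\,\|\nabla v_h\|_{0,\Omega}$ for every $v_h\in S_{h,0}$, where $R$ denotes the right-hand side of \eqref{equ:superclose}; combined with the coercivity estimate (taking $v_h=u_I-u_h$) this yields the theorem.

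Next I would write $a(u_I-u,v_h)=\sum_{K\in\mathcal{T}_h}\int_K\beta\,\nabla(u_I-u)\cdot\nabla v_h$ and split the triangles into three groups: (i) elements of $\mathcal{T}_{1,h}$ not meeting $\Gamma$, (ii) elements of $\mathcal{T}_{2,h}$ not meeting $\Gamma$, and (iii) interface elements (category 3). On group (i), $\beta$ is smooth and $u=u^{\pm}$ is $H^3\cap W^{2,\infty}$; freezing $\beta$ elementwise (the variation $\beta-\bar\beta_K=O(h_K)$ contributes only $O(h^2)$ against $\|\nabla(u-u_I)\|_{0,K}=O(h)$) and invoking the classical supercloseness estimate on $O(h^{1+\alpha})$ approximate parallelograms — exactly as in the proof of Theorem~\ref{thm:linearsuperclose}, cf.~\cite{xu2004analysisofrecovery} — bounds this contribution by $C\!\left(h^2\|u\|_{3}+h^{1+\alpha}\|u\|_{2,\infty}\right)\!\|\nabla v_h\|_{0,\Omega}\le C h^{1+\rho}(\|u\|_{3}+\|u\|_{2,\infty})\|\nabla v_h\|_{0,\Omega}$. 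On group (ii), the crude interpolation bound $\|\nabla(u-u_I)\|_{0,K}\lesssim h_K|K|^{1/2}\|u\|_{2,\infty,K}$ together with Cauchy--Schwarz and $\sum_{K\in\mathcal{T}_{2,h}}|K|=O(h^{\sigma})$ (from Definition~\ref{def:cond}) gives $C h^{1+\sigma/2}\|u\|_{2,\infty}\|\nabla v_h\|_{0,\Omega}$. Both are $\lesssim h^{1+\rho}(\cdots)$.

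The substantive part is group (iii). For an interface element $K$, relabel vertices so the vertex $C$ off $\Gamma$ lies in, say, $\Omega^+$ (the other case is symmetric), and put $K^{\pm}=K\cap\Omega^{\pm}$. Since $\Gamma$ is smooth and two vertices of $K$ lie on it, the curved side $\Gamma\cap K$ deviates from its chord by $O(h_K^2)$, so $K^-$ is a sliver with $|K^-|\lesssim h_K^3$. Extend $u^+$ to $\bar u\in W^{2,\infty}$ on a neighbourhood of $\Omega^+$ with $\|\bar u\|_{2,\infty}\lesssim\|u^+\|_{2,\infty,\Omega^+}$, and split $u=\bar u+(u-\bar u)$ on $K$, where $u-\bar u$ vanishes on $K^+$; this gives $\int_K\beta\nabla(u_I-u)\cdot\nabla v_h = -\int_K\beta\nabla(\bar u-u_I)\cdot\nabla v_h-\int_{K^-}\beta\nabla(u^--\bar u)\cdot\nabla v_h$. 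Because $\llbracket u\rrbracket=0$, the interpolant $u_I$ also interpolates $\bar u$ at the three vertices, so $\|\nabla(\bar u-u_I)\|_{0,\infty,K}\lesssim h_K\|\bar u\|_{2,\infty,K}$ and the first integral is $\lesssim h_K^2\|u\|_{2,\infty}\|\nabla v_h\|_{0,K}$. For the sliver integral I would use that $\nabla v_h$ is constant on $K$: pull it out, so the integral is bounded by $|\nabla v_h|_K|\cdot\|\beta\|_\infty|K^-|\,\|\nabla(u^--\bar u)\|_{0,\infty,K^-}$; using $\llbracket u\rrbracket=0$ (tangential derivative continuous) and the flux condition $\llbracket\beta\partial_n u\rrbracket=g$ (so $|\llbracket\partial_n u\rrbracket|\lesssim|g|+|\nabla u|$) one checks $\|\nabla(u^--\bar u)\|_{0,\infty,K^-}\lesssim\|g\|_{0,\infty,\Gamma}+\|u\|_{2,\infty,\Omega^-\cup\Omega^+}$, and with $|\nabla v_h|_K|\lesssim h_K^{-1}\|\nabla v_h\|_{0,K}$ and $|K^-|\lesssim h_K^3$ this integral is $\lesssim h_K^2(\|g\|_{0,\infty,\Gamma}+\|u\|_{2,\infty})\|\nabla v_h\|_{0,K}$. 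Summing both pieces over $\mathcal{T}_h^\Gamma$ via Cauchy--Schwarz, with $\sum_{K\in\mathcal{T}_h^\Gamma}h_K^4\le h^2\sum_{K\in\mathcal{T}_h^\Gamma}|K|\lesssim h^3$ (interface elements form a strip of width $O(h)$ along $\Gamma$), yields $C h^{3/2}(\|u\|_{2,\infty}+\|g\|_{0,\infty,\Gamma})\|\nabla v_h\|_{0,\Omega}$. Adding the three groups gives $|a(u_I-u,v_h)|\lesssim R\,\|\nabla v_h\|_{0,\Omega}$.

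The main obstacle is precisely this sliver estimate: a naive bound of $\int_{K^-}\beta\nabla(u^--\bar u)\cdot\nabla v_h$ only gives $O(h)$, and recovering the sharp $O(h^{3/2})$ requires both exploiting that $\nabla v_h$ is piecewise constant (which is why the statement is limited to linear elements) and correctly tracking how the flux jump $g$ controls $\llbracket\nabla u\rrbracket$, so that the kink of $u$ on interface elements costs no more than the $O(h^3)$ measure of the sliver $K^-$. The rest is bookkeeping: combining the classical supercloseness lemma on $\mathcal{T}_{1,h}$, the area bound on $\mathcal{T}_{2,h}$, and standard interpolation and Cauchy--Schwarz inequalities, as in \cite{guo2018grbfem}.
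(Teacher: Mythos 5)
The paper does not prove Theorem \ref{thm:bffem_supercloseness}; it only cites \cite{guo2018grbfem}, so there is no in-text argument to compare against. Your reconstruction is correct and follows the strategy one would expect from that reference: reduce via coercivity and Galerkin orthogonality to bounding $a(u_I-u,v_h)$, apply the Xu--Zhang edge-cancellation identity on the $O(h^{1+\alpha})$-parallelogram pairs away from $\Gamma$, use the area bound on $\mathcal{T}_{2,h}$, and treat the interface strip separately. You have also correctly isolated the two points that actually produce the $h^{3/2}(\|u\|_{2,\infty}+\|g\|_{0,\infty,\Gamma})$ term: on an interface element the interpolant agrees with the interpolant of the extension $\bar u=E^+u^+$ because $\llbracket u\rrbracket=0$, and the residual kink lives only on the sliver $K^-$ of measure $O(h_K^3)$, where pulling out the constant $\nabla v_h$ (this is where linearity of the elements is used) upgrades the naive $O(h)$ bound to $O(h_K^2)$ per element and hence $O(h^{3/2})$ over the strip, with the flux condition $\llbracket\beta\partial_n u\rrbracket=g$ controlling $\|\nabla(u^--\bar u)\|_{0,\infty,K^-}$. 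The only place the sketch is thin is the bookkeeping for edges shared between a group-(i) triangle and an interface (or $\mathcal{T}_{2,h}$) triangle, where the Xu--Zhang cancellation is unavailable; since those neighbouring triangles also form a set of area $O(h)$ on which $u$ is genuinely $W^{2,\infty}$, the crude bound there contributes another $h^{3/2}\|u\|_{2,\infty}$ and the stated estimate survives, but this should be said explicitly in a full write-up.
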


\begin{remark}
	When the jump function $g\equiv 0$, we can prove an improved result \cite{guo2018grbfem}
	  \begin{equation}\label{equ:improvedsuperclose}
 \begin{split}
 \|\nabla(u_I-u_h)\|_{0, \Omega} \lesssim& h^{1+\rho}(\|u\|_{3, \Omega^-\cup\Omega^+}
+ \|u\|_{2, \infty,  \Omega^-\cup\Omega^+}).
\end{split}
\end{equation} 
This result is the same as the supercloeseness in Theorem \ref{thm:linearsuperclose}. 
\end{remark}

Based on the supercloseness results, we can establish the superconvergence theory for the gradient recovery operator $R_h$. 
\begin{theorem}\label{thm:bffem_superconvergence}
Suppose the body-fitted mesh satisfies the Condition $(\sigma, \alpha)$. If $u \in H^3(\Omega^- \cup \Omega^+) \cap W^{2, \infty}(\Omega^- \cup \Omega^+)$,  then we have
 \begin{equation}\label{equ:super}
 \begin{split}
 \|\nabla u-R_hu_h \|_{0, \Omega} \lesssim& h^{1+\rho}\|u\|_{3, \infty, \Omega^-\cup\Omega^+} + 
h^{\frac{3}{2}}(\|u\|_{2, \infty,  \Omega^-\cup\Omega^+}+\|g\|_{0, \infty, \Gamma}),
\end{split}
\end{equation}
where $\rho = \min(\alpha, \frac{\sigma}{2}, \frac{1}{2})$.
\end{theorem}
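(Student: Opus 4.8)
The plan is to follow the three-term splitting used in the proof of Theorem~\ref{thm:grsuperconvergence}, with every ingredient replaced by its piecewise analogue on $\Omega^-$ and $\Omega^+$. Let $u_I=(u_I^-,u_I^+)\in S_h^-\times S_h^+$ be the piecewise nodal interpolant of $u$ and $(\nabla u)_I=\bigl((\nabla u^-)_I,(\nabla u^+)_I\bigr)$ the piecewise nodal interpolant of $\nabla u$, where at a vertex on $\Gamma$ the value on each side is taken from the corresponding smooth piece $u^{\pm}$, extended across $\Gamma$ by a norm-preserving Sobolev extension. The starting point is the triangle inequality
\begin{equation*}
\|\nabla u-R_hu_h\|_{0,\Omega}\le\|\nabla u-(\nabla u)_I\|_{0,\Omega}+\|(\nabla u)_I-R_hu_I\|_{0,\Omega}+\|R_h(u_I-u_h)\|_{0,\Omega},
\end{equation*}
and I would bound the three terms in turn.

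For the first term, piecewise interpolation estimates on $\Omega^-$ and $\Omega^+$ (with the curved interface triangles $K\cap\Omega^{\pm}$ handled via the extension of $u^{\pm}$) give $\|\nabla u-(\nabla u)_I\|_{0,\Omega}\lesssim h^2\|u\|_{3,\Omega^-\cup\Omega^+}$, which is $\lesssim h^{1+\rho}\|u\|_{3,\infty,\Omega^-\cup\Omega^+}$ since $\rho\le 1/2<1$ and the subdomains are bounded. For the second term, $(\nabla u)_I-R_hu_I$ is a finite element function whose nodal values on $\Omega^{\pm}$ are $\nabla u^{\pm}(z_i)-(G_h^{\pm}u_I^{\pm})(z_i)$; since the linear nodal basis of $S_h^{\pm}$ is a nonnegative partition of unity and $|\Omega|=O(1)$, its $L^2$ norm is controlled by $\max_{z_i}\bigl|\nabla u^{\pm}(z_i)-(G_h^{\pm}u_I^{\pm})(z_i)\bigr|$, and the piecewise consistency estimate coming from the polynomial-preserving property of Theorem~\ref{thm:ppp} via the Bramble--Hilbert lemma (exactly as in Theorem~\ref{thm:consistency}, again using the smooth extension of $u^{\pm}$ near $\Gamma$ and the boundary-patch construction there) bounds this by $h^2\|u\|_{3,\infty,\Omega^-\cup\Omega^+}\lesssim h^{1+\rho}\|u\|_{3,\infty,\Omega^-\cup\Omega^+}$. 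For the third term, linearity of $R_h$ and the boundedness property of Theorem~\ref{thm:boundedness}, applied element by element on $\mathcal{T}_h^{\pm}$ and summed exactly as in \eqref{equ:intermediate}, give $\|R_h(u_I-u_h)\|_{0,\Omega}\lesssim\|\nabla(u_I-u_h)\|_{0,\Omega}$, and the supercloseness estimate of Theorem~\ref{thm:bffem_supercloseness} bounds the right-hand side by $h^{1+\rho}(\|u\|_{3,\Omega^-\cup\Omega^+}+\|u\|_{2,\infty,\Omega^-\cup\Omega^+})+h^{3/2}(\|u\|_{2,\infty,\Omega^-\cup\Omega^+}+\|g\|_{0,\infty,\Gamma})$. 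Absorbing $\|u\|_{3,\Omega^-\cup\Omega^+}$ and $\|u\|_{2,\infty,\Omega^-\cup\Omega^+}$ into $\|u\|_{3,\infty,\Omega^-\cup\Omega^+}$ and collecting the three bounds gives the claim.

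The skeleton above is routine once Theorems~\ref{thm:ppp}, \ref{thm:boundedness} and \ref{thm:bffem_supercloseness} are available, so I expect the main obstacle to be the interface bookkeeping hidden in the first two terms. One must extend $u^-$ and $u^+$ across $\Gamma$ so that the local least-squares fits defining $G_h^{\pm}$ near the interface act on genuinely smooth data; verify that the possibly one-sided patches $\Omega_{z_i}^{\pm}$ near $\Gamma$ still admit a unique and stable $(k+1)$th-degree least-squares fit, so that the consistency estimate applies there; and absorb the $O(h^2)$ geometric discrepancy between the curved pieces $K\cap\Omega^{\pm}$ and the straight triangles $K$ used by the recovery operator. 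These are exactly the delicate points in body-fitted PPR for interface problems, and they are also where the $h^{3/2}$-type term already present in Theorem~\ref{thm:bffem_supercloseness} originates; note that the consistency step is where the $W^{3,\infty}(\Omega^-\cup\Omega^+)$ regularity implicit in the right-hand side of the statement is genuinely used, beyond the $H^3\cap W^{2,\infty}$ hypothesis.
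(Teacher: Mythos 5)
Your proposal follows exactly the same route as the paper's proof: the identical three-term splitting via $(\nabla u)_I$ and $R_hu_I$, the piecewise interpolation estimate for the first term, the nodal consistency bound (Theorem~\ref{thm:consistency} applied to $G_h^{\pm}$) for the second, and the boundedness of $G_h^{\pm}$ combined with the supercloseness of Theorem~\ref{thm:bffem_supercloseness} for the third. Your closing remarks on the interface bookkeeping and on the mismatch between the stated $W^{2,\infty}$ hypothesis and the $\|u\|_{3,\infty,\Omega^-\cup\Omega^+}$ norm in the bound are accurate observations about details the paper leaves implicit, but they do not change the argument.
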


\begin{proof}
We decompose $\nabla u - R_h u_h$ as
	\begin{equation*}
		\nabla u - R_hu_h = (\nabla u - (\nabla u)_I) +  ((\nabla u)_I - R_hu_I) + (R_hu_I - R_hu_h). 
	\end{equation*}
Using the triangle inequality, we obtain
	\begin{equation*}
		\begin{aligned}
			&\|\nabla u - R_hu_h\|_{0, \Omega^-\cup\Omega^+}\\
			 \lesssim & \|\nabla u - (\nabla u)_I\|_{0, \Omega^-\cup\Omega^+} +  \|(\nabla u)_I - R_hu_I\|_{0, \Omega^-\cup\Omega^+} + 
			  \|R_hu_I - R_hu_h\|_{0, \Omega^-\cup\Omega^+} \\
			  = & \|\nabla u - (\nabla u)_I\|_{0, \Omega^-\cup\Omega^+} + \\
			  & \|(\nabla u)_I - R_hu_I\|_{0, \Omega^-\cup\Omega^+}  + \|G_h^-u_I^- - G_h^-u_h^-\|_{0, \Omega^-} 
			+ \|G_h^+u_I^+ - G_h^+u_h^+\|_{0, \Omega^+}  \\	
			\lesssim & \|\nabla u - (\nabla u)_I\|_{0, \Omega^-\cup\Omega^+} +  \|(\nabla u)_I - R_hu_I\|_{0, \Omega^-\cup\Omega^+}  + \\
			& \|\nabla u_I^- - \nabla u_h^-\|_{0, \Omega^-} 
			+ \|\nabla u_I^+ - \nabla u_h^+\|_{0, \Omega^+} \\
				\lesssim & \|\nabla u - (\nabla u)_I\|_{0, \Omega^-\cup\Omega^+} +  \|(\nabla u)_I - R_hu_I\|_{0, \Omega^-\cup\Omega^+}   + \|\nabla u_I - \nabla u_h\|_{0, \Omega^-\cup \Omega^+}.
		\end{aligned}
	\end{equation*}
Using the standard approximation theory \cite{brenner2008fembook, ciarlet2002fembook}
piecewisely, we have 
\begin{equation}
	\begin{aligned}
		\|\nabla u - (\nabla u)_I\|_{0, \Omega^-\cup\Omega^+}
		\lesssim h^2 \|u\|_{3, \Omega^-\cup\Omega^+}. 
	\end{aligned}
\end{equation}
For the second term, we have
\begin{equation*}
	\begin{aligned}
		 &\|(\nabla u)_I - R_hu_I\|_{0, \Omega^-\cup\Omega^+} \\
		 \lesssim  &\|(\nabla u)_I - R_hu_I\|_{0,  \Omega^-}  + 
		 \|(\nabla u)_I - R_hu_I\|_{0,  \Omega^+}\\
		 \lesssim & \|(\nabla u)_I - R_hu_I\|_{0, \infty, \Omega^-}  + 
		 \|(\nabla u)_I - R_hu_I\|_{0, \infty, \Omega^+}\\
		 \lesssim & \|\sum_{i=1}^{\mathcal{N}_h^-}((\nabla u)_I - R_hu_I)(z_i^-)\phi^-_i(z_i^-)\|_{0, \infty, \Omega^-} + 
		 \|\sum_{i=1}^{\mathcal{N}_h^+}((\nabla u)_I - R_hu_I)(z_i^+)\phi^+_i(z_i^+)\|_{0, \infty, \Omega^+} \\
		  \lesssim &   \|\sum_{i=1}^{\mathcal{N}_h^-}((\nabla u)_I - R_hu_I)(z_i^-)\|_{0, \infty, \Omega^-} + 
		 \|\sum_{i=1}^{\mathcal{N}_h^+}((\nabla u)_I - R_hu_I)(z_i^+)\|_{0, \infty, \Omega^+} \\
		 \lesssim &  h^2 \|u\|_{3, \infty,  \Omega^-\cup\Omega^+},
	\end{aligned}
\end{equation*}
where we have used the consistency results in Theorem \ref{thm:consistency} for the gradient recovery operator $G^{\pm}_h$. 
Combining the above estimates with Theorem \ref{thm:bffem_supercloseness}, we complete the proof.
\end{proof}

\begin{remark}
Similarly, if $g \equiv 0$, we can establish the improved superconvergence result for $R_h$ as
\begin{equation*}
	\|\nabla u - R_h u_h \|_{0, \Omega} \lesssim h^{1+\rho} \|u\|_{3, \infty, \Omega^- \cup \Omega^+}.
\end{equation*}
This result matches the superconvergence result for problems without an interface.
\end{remark}

\subsubsection{Superconvergence analysis  of cut finite element methods}
In this subsubsection, we consider the superconvergence analysis of the gradient recovery operator $R_h$ within the context of cut finite element methods.  Let $\mathcal{T}_h$ be a triangulation of $\Omega$ that is independent of the location of the interface $\Gamma$. It is  typically chosen as a uniform mesh. Define the submeshes $\mathcal{T}_h^{\pm}$ as in \eqref{equ:fictmesh} and the fictitious subdomains $\Omega_h^{\pm}$ as in \eqref{equ:fictdomain}. Together, $\Omega_h^{\pm}$ form an overlapping domain decomposition of $\Omega$, illustrated in Figure \ref{fig:interface_mesh}.  We also define the interface mesh $\mathcal{T}_{\Gamma, h}$ as 
\begin{equation}
	\mathcal{T}_{\Gamma, h} = \{K \in \mathcal{T}_h : K \cap \Gamma \neq \emptyset\}.  
\end{equation}
Define the standard continuous linear finite element space $S_{h}^{\pm}$ on  $\mathcal{T}_{h}^{\pm}$ as
\begin{equation}
	S_h^{\pm} = \{ v \in C^0(\Omega_{h}^{\pm}) : v|_{K} \in \mathbb{P}_k(K), \forall K \in \mathcal{T}_h^{\pm} \}.
\end{equation}
The cut finite element space is given by $ S_h = S_h^{-} \oplus S_h^{+} $. Additionally, we define $S_{h,0} \subset S_h $ as the subspace of functions that satisfy vanishing Dirichlet boundary conditions.

For each element $K$ in $\mathcal{T}_{\Gamma, h}$, we define the subsets $K^{\pm} := K \cap \Omega^{\pm}$. Next, we introduce the weights \cite{ann2012robustnitsche, barrau2012robust}
\begin{equation}
\omega^-|_{K} = \frac{\beta^+|K^-|}{\beta^+|K^-|+\beta^-|K^+|}, \quad \omega^+|_{K} = \frac{\beta^-|K^+|}{\beta^+|K^-|+\beta^-|K^+|},
\end{equation}
which satisfy the relation $\omega^- + \omega^+ = 1$. Using these weights, we define the weighted averaging of a function $v_h$ in $V_h$ on the interface $\Gamma$ as follows:
\begin{equation}
\dgal{v_h} = \omega^-v_{h}^- +\omega^+v_{h}^+, \quad  \dgal{v_h} ^{\ast}=  \omega^+v_{h}^- +\omega^-v_{h}^+.
\end{equation}
Let $E^{\pm}$ be the $H^3$-extension operator from $H^3(\Omega^{\pm})$ to $H^3(\Omega)$ such that
\begin{equation}
(E^{\pm}w)|_{\Omega^{\pm}} = w,
\end{equation}
and
\begin{equation}
\|E^{\pm}w\|_{s,\infty, \Omega} \le C \|w\|_{s,\infty, \Omega^\pm}, \quad \forall w\in H^s(\Omega^{\pm}), \, s = 0, 1, 2,3.
\end{equation}

\begin{figure}[!h]
   \centering
   \subcaptionbox{\label{fig:meshwhole}}
  {\includegraphics[width=0.32\textwidth]{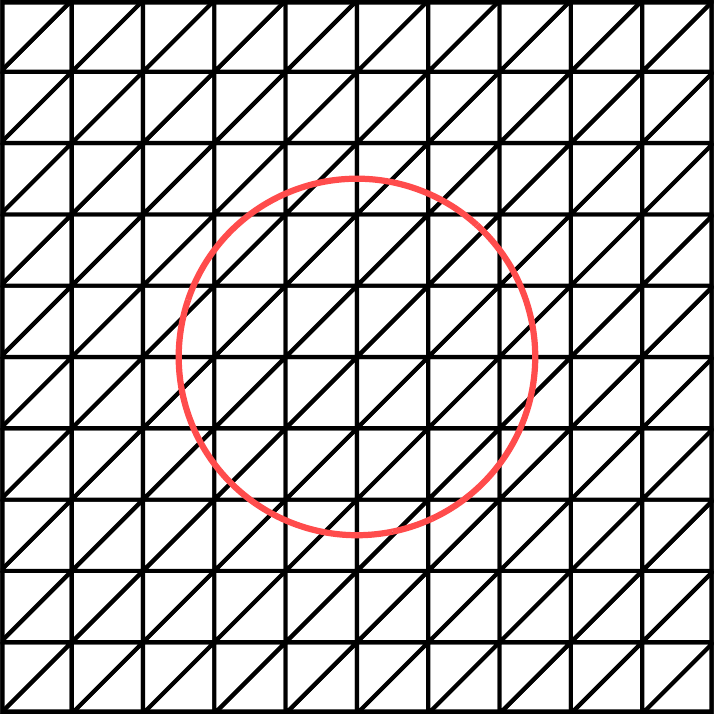}}
  \subcaptionbox{\label{fig:meshone}}
   {\includegraphics[width=0.32\textwidth]{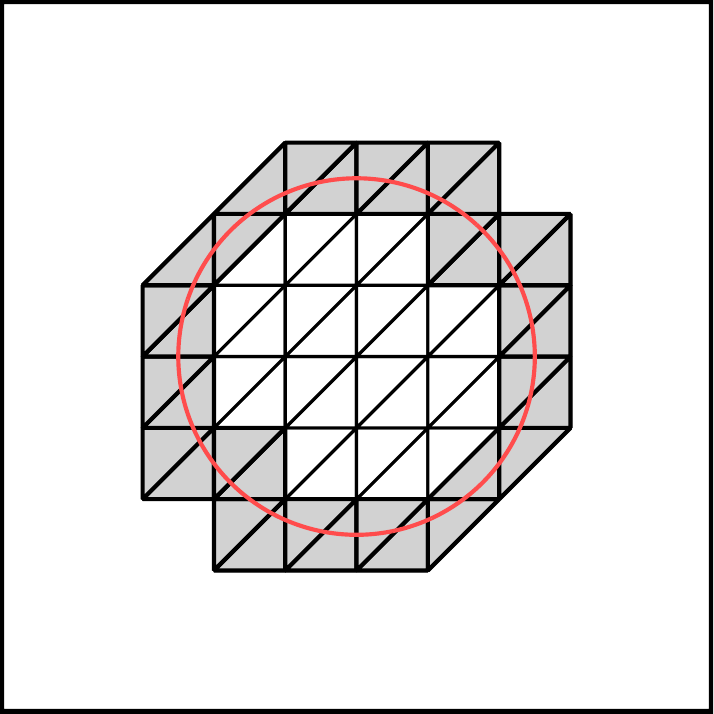}}
  \subcaptionbox{\label{fig:meshtwo}}
  {\includegraphics[width=0.32\textwidth]{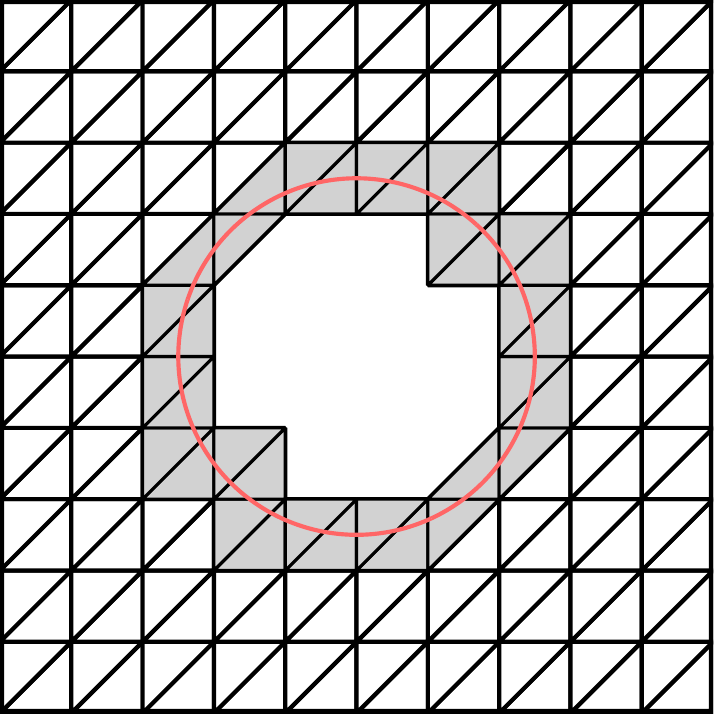}}
   \caption{Triangulation  $\mathcal{T}_h$ on a square domain $\Gamma$ with a circular interface $\Gamma$. (a): Triangulation $\mathcal{T}_h$; (b): Triangulation $\mathcal{T}_{h}^-$ on $\Omega_h^{-}$;
   (c): Triangulation $\mathcal{T}_{h}^+$ on $\Omega_h^{+}$.}\label{fig:interface_mesh}
\end{figure}

The cut finite element approximation \cite{hansbo2002nitsche, burman2015cutfem} of the interface problem \eqref{equ:interface_model} reads as: find $u_h = (u_h^-, u_h^+) \in S_{h,0}$ such that 
\begin{equation}\label{equ:cut_var}
a_h(u_h, v_h) = L_h(v_h), \quad \forall v_h \in S_{h,0},
\end{equation}
where the bilinear form $a_h$ is defined as
\begin{equation}\label{equ:cut_bilinear}
\begin{aligned}
 a_h(u_h,v_h) = &\sum\limits_{s=\pm}\left(\beta^s\nabla u_h^s, \nabla v_h^s\right)_{\Omega^s}
- \left\langle  \llbracket u_h \rrbracket ,\dgal{\beta \partial_nv_{h}}\right\rangle_{\Gamma} \\
&-\left\langle  \llbracket v_h \rrbracket ,\dgal{\beta \partial_nu_{h}}\right\rangle_{\Gamma}
+ h^{-1}\left\langle \gamma \llbracket u_h \rrbracket,  \llbracket v_h \rrbracket\right\rangle_{\Gamma},	
\end{aligned}
\end{equation}
and the linear functional $L_h$ is defined as
\begin{equation}\label{equ:cut_functional}
L_h(v_h) = \sum\limits_{s=\pm}(f, 	v_h^{s})_{\Omega^s} 
+\left\langle g, \dgal{v_h}^{\ast}\right\rangle_{\Gamma},
\end{equation}
with the stability parameter \cite{ann2012robustnitsche, barrau2012robust}
\begin{equation}
\gamma|_{K} = \frac{2h_{K}|\Gamma_T|}{|{K}^-|/\beta^-+|{K}^+|/\beta^+}.
\end{equation}

For the cut finite element solution $u_h$, Guo and Yang \cite{guo2018grcutfem} established the following supercloseness result:
\begin{theorem}\label{thm:cut_supercloseness}
 Suppose the triangulation $\mathcal{T}_h$ satisfies  Condition $(\sigma,\alpha)$.  Let $u$ be the solution of
 the interface problem  \eqref{equ:interface_model} and $u_I$ be the interpolation of $u$ in the finite element space
 $S_{h,0}$. If $u \in H^1(\Omega) \cap H^{3}(\Omega^-\cup\Omega^+)\cap W^{2, \infty}(\Omega^-\cup\Omega^+)$,
 then we have
 \begin{equation}\label{eq:superconvergence}
\|\nabla u_I -\nabla u_h\|_{0,\Omega^-\cup\Omega^+}  \le C\left(h^{1+\rho}(\|u\|_{3, \Omega^-\cup\Omega^+} + \|u\|_{2, \infty, \Omega^-\cup\Omega^+}) + h^{3/2} \|u\|_{2, \infty, \Omega^-\cup\Omega^+}\right),
\end{equation}
where $\rho = \min(\alpha, \frac{\sigma}{2}, \frac{1}{2})$.
\end{theorem}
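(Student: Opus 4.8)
The plan is to run the usual supercloseness argument in the mesh-dependent Nitsche energy norm. Set $e_h := u_I - u_h \in S_{h,0}$ and work with the norm
\begin{equation*}
\|v\|_{a_h}^2 := \sum_{s=\pm}\|\nabla v^s\|_{0,\Omega^s}^2 + h\,\|\dgal{\beta\,\partial_n v}\|_{0,\Gamma}^2 + h^{-1}\|\gamma^{1/2}\llbracket v\rrbracket\|_{0,\Gamma}^2 .
\end{equation*}
With the Hansbo--Hansbo weights $\omega^{\pm}$ and penalty $\gamma$ of \eqref{equ:cut_bilinear}, the form $a_h(\cdot,\cdot)$ is coercive and bounded on $S_{h,0}$ with respect to $\|\cdot\|_{a_h}$, uniformly in how $\Gamma$ cuts $\mathcal{T}_h$ and in the jump of $\beta$ (this is the standard cut finite element stability analysis, cf. \cite{hansbo2002nitsche, burman2015cutfem, guo2018grcutfem}); in particular $\|e_h\|_{a_h}^2 \lesssim a_h(e_h, e_h)$. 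Moreover \eqref{equ:cut_var}--\eqref{equ:cut_functional} is consistent: integrating by parts over $\Omega^{\pm}$ and invoking $\llbracket u\rrbracket = 0$ and $\llbracket\beta\,\partial_n u\rrbracket = g$ gives $a_h(u, v_h) = L_h(v_h)$ for all $v_h\in S_{h,0}$, whence the Galerkin orthogonality $a_h(u - u_h, v_h) = 0$. Therefore $a_h(e_h, e_h) = a_h(u_I - u, e_h)$, and it suffices to bound $a_h(u_I - u, e_h)$ by the right-hand side of \eqref{eq:superconvergence} times $\|e_h\|_{a_h}$.

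First I would dispose of the three interface terms in $a_h(u_I - u, e_h)$ --- the two flux-averaging terms and the penalty term. Since $u$ is continuous across $\Gamma$, one has $\llbracket u_I - u\rrbracket = \llbracket u_I\rrbracket$ on $\Gamma$; writing $u_I^{\pm}$ as the $S_h^{\pm}$-interpolant of the extension $E^{\pm}u$ and using $E^+u = E^-u = u$ on $\Gamma$ gives $\llbracket u_I\rrbracket|_{\Gamma} = \big((E^+u)_I - E^+u\big)|_{\Gamma} - \big((E^-u)_I - E^-u\big)|_{\Gamma}$. On each of the $\mathcal{O}(h^{-1})$ elements $K\in\mathcal{T}_{\Gamma,h}$, the $L^{\infty}$ interpolation estimates for $E^{\pm}u\in W^{2,\infty}(\Omega)$ together with $|\Gamma\cap K|\lesssim h$ yield $\|\llbracket u_I\rrbracket\|_{0,\Gamma\cap K}\lesssim h^{5/2}\|u\|_{2,\infty,\Omega^-\cup\Omega^+}$ and $\|\dgal{\beta\,\partial_n(u_I - u)}\|_{0,\Gamma\cap K}\lesssim h^{3/2}\|u\|_{2,\infty,\Omega^-\cup\Omega^+}$; summing over $\mathcal{T}_{\Gamma,h}$ gives $\|\llbracket u_I\rrbracket\|_{0,\Gamma}\lesssim h^{2}\|u\|_{2,\infty,\Omega^-\cup\Omega^+}$ and $\|\dgal{\beta\,\partial_n(u_I - u)}\|_{0,\Gamma}\lesssim h\,\|u\|_{2,\infty,\Omega^-\cup\Omega^+}$. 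Since $\gamma\lesssim 1$ (on a cut element the larger of $|K^{\pm}|$ is $\gtrsim h^2$), Cauchy--Schwarz against the $h^{-1/2}$- and $h^{1/2}$-weighted pieces of $\|e_h\|_{a_h}$ bounds all three interface terms by $h^{3/2}\|u\|_{2,\infty,\Omega^-\cup\Omega^+}\,\|e_h\|_{a_h}$, which is precisely the second term on the right of \eqref{eq:superconvergence}.

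The heart of the argument is the bulk term $\sum_{s=\pm}(\beta^s\nabla(u_I - u)^s,\nabla e_h^s)_{\Omega^s}$. On $\Omega^s$, $(u_I - u)^s = (E^su)_I - E^su$, i.e.\ the $\Omega^s$-restricted interpolation error of the extension $E^su \in H^3(\Omega)\cap W^{2,\infty}(\Omega)$, with $\|E^su\|_{3,\Omega}\lesssim\|u\|_{3,\Omega^s}$ and $\|E^su\|_{2,\infty,\Omega}\lesssim\|u\|_{2,\infty,\Omega^s}$. I would partition $\mathcal{T}_h$, using Condition $(\sigma,\alpha)$, into (a) pairs of $\mathcal{T}_{1,h}$-triangles forming $\mathcal{O}(h^{1+\alpha})$ approximate parallelograms that lie strictly on one side of $\Gamma$, and (b) everything else: the $\mathcal{T}_{2,h}$-elements, of total area $\mathcal{O}(h^{\sigma})$, together with the $\mathcal{O}(h^{-1})$ elements meeting $\Gamma$ and their parallelogram partners, of total area $\mathcal{O}(h)$. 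On the pairs in (a), the edge-based cancellation estimate for linear Lagrange interpolation --- the mechanism behind Theorem \ref{thm:linearsuperclose}, see \cite{xu2004analysisofrecovery}, now applied to $E^su$ instead of $u$ --- contributes $h^{1+\rho}\big(\|u\|_{3,\Omega^-\cup\Omega^+} + \|u\|_{2,\infty,\Omega^-\cup\Omega^+}\big)\|\nabla e_h\|_{0,\Omega^-\cup\Omega^+}$. On the elements in (b) one applies Cauchy--Schwarz with the $\mathcal{O}(h)$ $L^{\infty}$ gradient-interpolation estimate for $E^su$ and the area bounds $\mathcal{O}(h^{\sigma})$, $\mathcal{O}(h)$, which gives $\big(h^{1+\sigma/2} + h^{3/2}\big)\|u\|_{2,\infty,\Omega^-\cup\Omega^+}\|\nabla e_h\|_{0,\Omega^-\cup\Omega^+}$. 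Adding the bulk and interface contributions, dividing by $\|e_h\|_{a_h}$, and using $\|\nabla e_h\|_{0,\Omega^-\cup\Omega^+}\le\|e_h\|_{a_h}$ concludes the proof.

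The main obstacle, and the only genuinely new point compared with the interface-free Theorem \ref{thm:linearsuperclose}, is the treatment near $\Gamma$: one must (i) push the parallelogram-cancellation estimate through for the Sobolev extension $E^su$ rather than for $u$ itself; (ii) control every interface quantity on the shape-regular parent element $K$ --- never on the badly shaped cut pieces $K^{\pm}$, on which interpolation and inverse estimates fail --- while integrating only over $K\cap\Omega^s$; and (iii) keep $a_h$ coercive in $\|\cdot\|_{a_h}$ robustly with respect to the cut, which is exactly where the Hansbo--Hansbo choices of $\omega^{\pm}$ and $\gamma$ in \eqref{equ:cut_bilinear} are needed. Everything else is a bookkeeping combination of the classical supercloseness machinery with these interface estimates.
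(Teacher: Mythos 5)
The paper does not actually prove this theorem: it is quoted from Guo and Yang \cite{guo2018grcutfem}, so there is no in-paper argument to compare against. Your outline --- Nitsche-norm coercivity and Galerkin orthogonality, $h^{3/2}$ control of the jump, flux-average and penalty terms on $\Gamma$ via element-wise $L^\infty$ interpolation of the extensions $E^{\pm}u$, and the Xu--Zhang parallelogram cancellation on the interface-free bulk with the $\mathcal{O}(h)$-area near-interface strip handled by brute-force Cauchy--Schwarz --- is exactly the standard route for this result and is consistent with both the structure of the stated bound and the cited proof.
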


Building on the supercloseness result, we derive the superconvergence of the gradient recovery operator $R_h$.\begin{theorem}\label{thm:cut_superconvergence}
 Suppose the triangulation $\mathcal{T}_h$ satisfies  Condition $(\sigma,\alpha)$. If $u \in H^1(\Omega) \cap  W^{3, \infty}(\Omega^-\cup\Omega^+)$,
 then we have
 \begin{equation}\label{eq:superconvergence1}
 \|\nabla u -R_hu_h\|_{0, \Omega^-\cup\Omega^+} \lesssim h^{1+\rho}\|u\|_{3, \infty, \Omega^-\cup\Omega^+} + h^{3/2} \|u\|_{2, \infty, \Omega^-\cup\Omega^+},
\end{equation}
where $\rho = \min(\alpha, \frac{\sigma}{2}, \frac{1}{2})$.
\end{theorem}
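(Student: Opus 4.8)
The plan is to mimic the proof of Theorem~\ref{thm:bffem_superconvergence}, carrying the extra bookkeeping forced by the fact that the discrete components $u_h^{\pm}$ live on the fictitious subdomains $\Omega_h^{\pm}\supseteq\Omega^{\pm}$ rather than on $\Omega^{\pm}$ themselves. Let $E^{\pm}$ be the extension operators introduced above, write $\tilde u^{\pm}=E^{\pm}u$ (so that $\tilde u^{\pm}$ is defined and smooth on all of $\Omega_h^{\pm}$, with $\|\tilde u^{\pm}\|_{3,\infty,\Omega}\lesssim\|u\|_{3,\infty,\Omega^{\pm}}$), and let $u_I=(u_I^-,u_I^+)\in S_{h,0}$ be the interpolant whose $\pm$-component is the Lagrange interpolant of $\tilde u^{\pm}$ on $\mathcal{T}_h^{\pm}$; this is precisely the $u_I$ appearing in Theorem~\ref{thm:cut_supercloseness}. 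On each physical subdomain I would use the splitting
\begin{equation*}
\nabla u-G_h^{\pm}u_h^{\pm}=\big(\nabla\tilde u^{\pm}-(\nabla\tilde u^{\pm})_I\big)+\big((\nabla\tilde u^{\pm})_I-G_h^{\pm}u_I^{\pm}\big)+G_h^{\pm}\big(u_I^{\pm}-u_h^{\pm}\big),
\end{equation*}
estimate the three pieces in $L^2(\Omega^{\pm})$, and then assemble the $\pm$ parts through the piecewise norm \eqref{eq:pnorm}.

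The first two pieces are the interface-blind consistency contributions and are handled exactly as in the body-fitted case. By standard approximation theory applied elementwise on $\mathcal{T}_h^{\pm}$, together with the stability of $E^{\pm}$, the first piece is $O(h^{2}\|u\|_{3,\infty,\Omega^-\cup\Omega^+})$. For the second piece, since $G_h^{\pm}$ depends only on nodal values one has $G_h^{\pm}u_I^{\pm}=G_h^{\pm}\tilde u^{\pm}$; inserting $\nabla\tilde u^{\pm}$, bounding the $S_h^{\pm}$-norm by the maximum over nodal values, and invoking the consistency estimate of Theorem~\ref{thm:consistency} for $G_h^{\pm}$ on each patch $\Omega_{z_i}^{\pm}$ (on which $\tilde u^{\pm}\in W^{3}_{\infty}$ thanks to the extension) yields a bound of order $h^{2}\|u\|_{3,\infty,\Omega^-\cup\Omega^+}$. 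Both are of higher order than $h^{1+\rho}$ and get absorbed into the final estimate.

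The third piece carries the superconvergence. Using the boundedness of $G_h^{\pm}$ (Theorem~\ref{thm:boundedness}) elementwise, $\|G_h^{\pm}(u_I^{\pm}-u_h^{\pm})\|_{0,K}^2\lesssim\|\nabla(u_I^{\pm}-u_h^{\pm})\|_{0,\Omega_K}^2$, and summing over all $K$ meeting $\Omega^{\pm}$ reduces the task to bounding $\|\nabla(u_I^{\pm}-u_h^{\pm})\|$ over the union of the recovery patches $\Omega_{z_j}^{\pm}$ attached to the nodes of those elements. For patches contained in $\Omega^{\pm}$, the supercloseness bound \eqref{eq:superconvergence} of Theorem~\ref{thm:cut_supercloseness} applies directly and produces the term $h^{1+\rho}(\|u\|_{3,\Omega^-\cup\Omega^+}+\|u\|_{2,\infty,\Omega^-\cup\Omega^+})+h^{3/2}\|u\|_{2,\infty,\Omega^-\cup\Omega^+}$; combined with $\|u\|_{3,\Omega^-\cup\Omega^+}\lesssim\|u\|_{3,\infty,\Omega^-\cup\Omega^+}$ this already accounts for the right-hand side of \eqref{eq:superconvergence1}. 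What remains is the contribution of the patches that meet the interface $\Gamma$.

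The main obstacle is exactly this last point. The recovery patches attached to cut elements poke into the fictitious band $\Omega_h^{\pm}\setminus\Omega^{\pm}$, whereas the supercloseness estimate only measures $\nabla(u_I-u_h)$ on the physical subdomains; to close the argument one must show the discrete error on this $O(h)$-thick band of cut elements is still $O(h^{1+\rho})$ (or at worst $O(h^{3/2})$) in $L^2$. This does not follow from the global supercloseness statement alone, and is where the cut-finite-element orthogonality/consistency analysis of \cite{guo2018grcutfem} must be revisited locally on the interface elements --- either by using an interface-adapted recovery near $\Gamma$ built from interior-biased patches (in the spirit of the boundary treatment of \cite{guo2016pprboundary}), so that the patches stay inside $\Omega^{\pm}$ and the previous step applies verbatim, or by a direct estimate on the band; in either case the residual interface term is dominated by the $h^{3/2}$-contribution, which ultimately stems from the geometric interface-approximation consistency error of the Nitsche-type bilinear form \eqref{equ:cut_bilinear} rather than from the recovery step. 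Once this band estimate is secured, combining it with the three bounds above gives \eqref{eq:superconvergence1}.
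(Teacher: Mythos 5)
Your decomposition and the first two (consistency) estimates coincide with the paper's proof, and you correctly locate the crux: after applying the boundedness of $G_h^{\pm}$, the recovered error on $\Omega^{\pm}$ is controlled by $\|\nabla(u_I^{\pm}-u_h^{\pm})\|_{0,\Omega_h^{\pm}}$, which spills onto the band $\Omega_h^{\pm}\setminus\Omega^{\pm}$ not covered by the supercloseness result. But at exactly that point your argument stops: you assert that the band contribution ``is dominated by the $h^{3/2}$-contribution'' and that the analysis of \cite{guo2018grcutfem} ``must be revisited,'' without supplying the estimate. Since the $h^{3/2}$ term is precisely what appears in the statement of the theorem, the step you defer is the one quantitative ingredient that is not already contained in Theorem \ref{thm:cut_supercloseness}, so as written the proof is incomplete. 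The paper closes it by a short direct computation: writing $F_2=\|\nabla(u_I^--u_h^-)\|_{0,\Omega_h^-\setminus\Omega^-}$, covering the band by the interface elements $\mathcal{T}_{\Gamma,h}$, bounding $\|\nabla((E^-u^-)_I-u_h^-)\|_{0,K}\lesssim h^2\|E^-u^-\|_{2,\infty,K}$ on each such element, and using the count $\sum_{K\in\mathcal{T}_{\Gamma,h}}1=\mathcal{O}(h^{-1})$ to obtain $F_2\lesssim h^{3/2}\|u^-\|_{2,\infty,\Omega^-}$ (and likewise $F_3$). No local re-examination of the Nitsche orthogonality is invoked; the band term is pure approximation theory for the extension $E^{\pm}u$ plus an interface-element count, so your attribution of the $h^{3/2}$ loss to the ``geometric consistency error of the Nitsche form'' does not reflect how the bound is actually obtained (the other $h^{3/2}$ term is inherited wholesale from the supercloseness estimate \eqref{eq:superconvergence}).

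Two smaller points. First, your fallback of replacing $R_h$ near $\Gamma$ by an interface-adapted recovery with interior-biased patches would change the operator and hence prove a statement about a different estimator, not Theorem \ref{thm:cut_superconvergence} as stated for the $R_h$ built on the fictitious subdomains $\Omega_h^{\pm}$. Second, your reduction ``for patches contained in $\Omega^{\pm}$ the supercloseness applies, and this already accounts for the right-hand side'' is fine, but note that the hypothesis $u\in W^{3,\infty}(\Omega^-\cup\Omega^+)$ is what lets the $\|u\|_{3,\Omega^-\cup\Omega^+}$ and $\|u\|_{2,\infty,\Omega^-\cup\Omega^+}$ factors from Theorem \ref{thm:cut_supercloseness} be absorbed into the norms appearing in \eqref{eq:superconvergence1}; you state this, and it is consistent with the paper.
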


\begin{proof}
 By the triangle inequality, we obtain
 \begin{equation*}
\begin{aligned}
 \|\nabla u - R_hu_h\|_{0, \Omega^-\cup\Omega^+} \le & \|\nabla u - (\nabla u)_I\|_{0, \Omega^-\cup\Omega^+} +  \|(\nabla u)_I - R_hu_I\|_{0, \Omega^-\cup\Omega^+} + \\
 & \|R_hu_I - R_hu_h\|_{0, \Omega^-\cup\Omega^+} .
\end{aligned}
\end{equation*}
Using the standard approximation theory piecewise, we have
\begin{equation*}
	\|\nabla u - (\nabla u)_I\|_{0, \Omega^-\cup\Omega^+}
	\le h^2 \|u\|_{3,  \Omega^-\cup\Omega^+}. 
\end{equation*}
Next, we estimate the second term:
\begin{equation*}
	\begin{aligned}
		 &\|(\nabla u)_I - R_hu_I\|_{0, \Omega^-\cup\Omega^+} \\
		 \lesssim  &\|(\nabla u)_I - R_hu_I\|_{0,  \Omega^-}  + 
		 \|(\nabla u)_I - R_hu_I\|_{0,  \Omega^-_h}\\
		 \lesssim & \|(\nabla u)_I - R_hu_I\|_{0, \infty, \Omega^-_h}  + 
		 \|(\nabla u)_I - R_hu_I\|_{0, \infty, \Omega^-}\\
		 \lesssim & \|\sum_{i=1}^{\mathcal{N}_h^-}((\nabla u)_I - R_hu_I)(z_i^-)\phi^-_i(z_i^-)\|_{0, \infty, \Omega^-_h} + 
		 \|\sum_{i=1}^{\mathcal{N}_h^+}((\nabla u)_I - R_hu_I)(z_i^+)\phi^+_i(z_i^+)\|_{0, \infty, \Omega^+_h} \\
		  \lesssim &   \|\sum_{i=1}^{\mathcal{N}_h^-}((\nabla u)_I - R_hu_I)(z_i^-)\|_{0, \infty, \Omega^-_h} + 
		 \|\sum_{i=1}^{\mathcal{N}_h^+}((\nabla u)_I - R_hu_I)(z_i^+)\|_{0, \infty, \Omega^+_h} \\
		  \lesssim &  h^2 \|E^-u\|_{3, \infty,  \Omega^-_h} +  h^2 \|E^+u\|_{3, \infty,  \Omega^+_h}\\
		 \lesssim &  h^2 \|u\|_{3, \infty,  \Omega^-\cup\Omega^+}.
	\end{aligned}
\end{equation*}
Then, we estimate the last term:
\begin{equation*}
\begin{aligned}
&\|R_hu_I- R_hu_h\|_{0, \Omega^-\cup\Omega^+}\\
 = & 
\|G_h^-u_I^-- G_h^-u_h^-\|_{0, \Omega^-} + \|G_h^+u_I^+ - G_h^+u_h^+\|_{0, \Omega^+}  \\
\lesssim &\|G_h^-u_I^-- G_h^-u_h^-\|_{0, \Omega^-_h} + \|G_h^+u_I^+ - G_h^+u_h^+\|_{0, \Omega^+_h}  \\
\lesssim &\|\nabla u_I^-- \nabla u_h^-\|_{0, \Omega^-_h} + \|\nabla u_I^+ - \nabla u_h^+\|_{0, \Omega^+_h}  \\
\lesssim &\|\nabla u_I^-- \nabla u_h^-\|_{0, \Omega^-} + \|\nabla u_I^-- \nabla u_h^-\|_{0, \Omega^-_h\backslash\Omega^-} +  \\
 &\|\nabla u_I^+ - \nabla u_h^+\|_{0, \Omega^+} +\|\nabla u_I^+- \nabla u_h^+\|_{0, \Omega^+_h\backslash\Omega^+}  \\
 \lesssim &\|\nabla u_I- \nabla u_h\|_{0, \Omega^-\cup\Omega^+} + \|\nabla u_I^-- \nabla u_h^-\|_{0, \Omega^-_h\backslash\Omega^-}  +\|\nabla u_I^+- \nabla u_h^+\|_{0, \Omega^+_h\backslash\Omega^+} \\
 := & F_1 + F_2 + F_3. 
\end{aligned}
\end{equation*}
For  $F_1$ , Theorem \ref{thm:cut_supercloseness} yields
\begin{equation*}
F_1\le C\left(h^{1+\rho}(\|u\|_{3, \Omega^-\cup\Omega^+} + \|u\|_{2, \infty, \Omega^-\cup\Omega^+}) + h^{3/2} \|u\|_{2, \infty, \Omega^-\cup\Omega^+}\right).
\end{equation*}
Then, we estimate $F_2$ as
\begin{equation*}
\begin{aligned}
F_2^2 & =  \|\nabla u_I^-- \nabla u_h^-\|_{0, \Omega^-_h\backslash\Omega^-}^2
\lesssim \sum_{K\in\mathcal{T}_{\Gamma,h}}\|\nabla((E^-u^-)_I- u_{h}^-)\|_{0,K}^2\\
& \lesssim \sum_{K\in\mathcal{T}_{\Gamma,h}} h^4\|E^-u^-\|_{2,\infty,K}^2 
\lesssim h^4\|u^-\|_{2,\infty,\Omega^-}^2 \sum_{K\in\mathcal{T}_{\Gamma,h}} 1 \\
& \lesssim h^3\|u^-\|_{2,\infty, \Omega^-}^2,
\end{aligned}
\end{equation*}
where we have used the fact $ \sum_{T\in \mathcal{T}_{\Gamma,h}} 1 \approx \mathcal{O}(h^{-1})$.  Similarly, we find
\begin{equation*}
F_3 \le  Ch^{3/2}\|u^+\|_{2,\infty, \Omega^+}.
\end{equation*}
Combining these estimates completes the proof.
\end{proof}

\begin{remark}
	For the cut finite element method, we establish an $\mathcal{O}(h^{3/2})$ order of superconvergence for any function $g$. In contrast, for the body-fitted finite element method, we proved an $\mathcal{O}(h^2)$ superconvergence rate when $g\equiv 0$. 
\end{remark}

\section{Recovery type \textbf{ \textit{a posteriori} }error estimators} \label{sec:estimator}
One of the most important applications of recovery techniques is in the construction of {\it a posteriori} error estimators. In an {\it a posteriori} error estimator, the primary objective is to compute an estimate of the exact error, as defined by
\begin{equation}
e_{K} := \| D^{\alpha} u - D^{\alpha} u_h\|_{0, K},
\end{equation}
where $K \in \mathcal{T}_h$, and $\alpha$ represents a two-index notation corresponding to the differential order of the partial differential equation (PDE). Specifically, for second-order PDEs, $|\alpha| = 1$, while for fourth-order PDEs, $|\alpha|=2$.

As demonstrated by the superconvergence results in Section \ref{sec:super}, the recovered quantities, such as gradients and Hessians, yield higher-order approximations to the exact values of these quantities. The key idea  of recovery type {\it a posteriori} error estimators is to replace unknown quantities with their recovered counterparts. This class of error estimator, originally introduced by Zienkiewicz and Zhu \cite{zhu1992spr1}, is widely known as the ZZ error estimator. The recovery type {\it a posteriori} error estimator is defined as
\begin{equation}
\eta_{K, h} := \| D^{\alpha}_h u_h - D^{\alpha} u_h\|_{0, K},
\end{equation}
where $D^{\alpha}_h$ denotes the recovery operator associated with $D^{\alpha}$.

To  quantify the performance of {\it a posteriori} error estimators,  we introduce the effective index 
\begin{equation}
	\kappa_h := \frac{\left(\sum\limits_{K\in\mathcal{T}_h}\eta_{K, h}^2\right)^{1/2} }{\left(\sum\limits_{K\in\mathcal{T}_h}e_{K}^2 \right)^{1/2}}.
\end{equation}
In the context of {\it a posteriori} error estimation, the ideal behavior of an estimator is referred to as asymptotic exactness.
\begin{definition}
An {\it a posteriori} error estimator $\eta_{K,h}$ is said to be asymptotically exact if
 \begin{equation}
 	\lim_{h\rightarrow 0} \kappa_h = 1. 
 \end{equation}
\end{definition}

\begin{remark}	
	In the context of residual-type {\it a posteriori} error estimates, the vast majority of research has focused on demonstrating the reliability and efficiency of an {\it a posteriori} error estimator, specifically by proving the existence of two positive constants, $C_1$ and $C_2$, such that
	\begin{equation}
		C_1\left(\sum\limits_{K\in\mathcal{T}_h}\eta_{K, h}^2\right)^{1/2} \le \left(\sum\limits_{K\in\mathcal{T}_h}e_{K}^2 \right)^{1/2} \le  C_2\left(\sum\limits_{K\in\mathcal{T}_h}\eta_{K, h}^2\right)^{1/2}.
	\end{equation}
Asymptotic exactness implies both reliability and efficiency, with $C_1 = C_2 = 1$.
	
\end{remark}


\subsection{Second-order elliptic equations}
In this subsubsection, we consider the adaptive computation of the second-order model equation \eqref{equ:elliptic}.   The local {\it a posteriori} error estimator on each element $K\in\mathcal{T}_h$ is defined as
\begin{equation}\label{equ:secondorder_localerrorestimator}
	\eta_{K, h} := \|  G_h u_h - \nabla u_h \|_{0, K},
\end{equation}
and the corresponding global {\it a posteriori}  error estimator as
\begin{equation} \label{equ:secondorder_globalerrorestimator}
	\eta_{h} := \| G_h u_h - \nabla u_h \|_{0, \Omega}.
\end{equation}

With the previous superconvergence result in Subsection \ref{ssec:mildlystructured}, we can show the asymptotic exactness of the recovery type {\it a posteriori} error estimator in \eqref{equ:secondorder_globalerrorestimator}: 
\begin{theorem}\label{thm:ae_secondorder}
	Assume the same conditions in Theorem \ref{thm:grsuperconvergence} or Theorem \ref{equ:grquadsuper}. Let $u$ be the solution of the variation problem \eqref{equ:elliptic} and let $u_h$ be the finite element solution of discrete variational problem \eqref{equ:elliptic_fem}.  Further assume that there is a constant $C(u)>0$  such that 
	\begin{equation} \label{equ:gradlowerbound}
		\|\nabla u - \nabla u_h\|_{0, \Omega} \ge C(u) h^{k}. 
	\end{equation}
	Then, we have 
	\begin{equation}
		\left| \frac{\eta_h}{\|\nabla u - \nabla u_h\|_{0, \Omega}} -1 \right|  \lesssim \mathcal{O}(h^{k-1+ \min\{1, \rho\}}).
	\end{equation}
\end{theorem}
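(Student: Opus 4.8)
The key point is that asymptotic exactness concerns the \emph{ratio} of $\eta_h$ to the true error $\|\nabla u-\nabla u_h\|_{0,\Omega}$, and that these two quantities are the norms of two vectors whose difference is exactly the recovery gap,
\[
  (G_hu_h-\nabla u_h)-(\nabla u-\nabla u_h)=G_hu_h-\nabla u .
\]
Hence the reverse triangle inequality gives immediately
\[
  \bigl|\,\eta_h-\|\nabla u-\nabla u_h\|_{0,\Omega}\,\bigr|\le\|\nabla u-G_hu_h\|_{0,\Omega},
\]
and dividing by $\|\nabla u-\nabla u_h\|_{0,\Omega}$ (which is positive for small $h$ thanks to \eqref{equ:gradlowerbound}) reduces the theorem to estimating the single quotient
\[
  \frac{\|\nabla u-G_hu_h\|_{0,\Omega}}{\|\nabla u-\nabla u_h\|_{0,\Omega}} .
\]

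\medskip

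For the numerator I would quote the superconvergence estimates already proved: Theorem \ref{thm:grsuperconvergence} under the hypotheses of the linear case, giving $\|\nabla u-G_hu_h\|_{0,\Omega}\lesssim h^{1+\rho}\|u\|_{3,\infty,\Omega}$ with $\rho=\min(\alpha,\sigma/2,1/2)$, and Theorem \ref{equ:grquadsuper} under those of the quadratic case, giving $\|\nabla u-G_hu_h\|_{0,\Omega}\lesssim h^{3}\|u\|_{4,\infty,\Omega}$; in each case the recovered gradient beats the optimal order $h^{k}$ by a factor $h^{\min\{1,\rho\}}$. For the denominator I would invoke the saturation hypothesis \eqref{equ:gradlowerbound}, $\|\nabla u-\nabla u_h\|_{0,\Omega}\ge C(u)h^{k}$. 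Substituting both bounds into the quotient and absorbing $C(u)$ together with the $W^{k+2}_{\infty}$-seminorm of $u$ into the generic constant, collecting the powers of $h$ yields the asserted estimate.

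\medskip

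I do not expect any real obstacle: once Theorems \ref{thm:grsuperconvergence}/\ref{equ:grquadsuper} are available the argument is simply the chain of inequalities above. The one conceptual ingredient that genuinely matters -- and the natural place for a referee to push back -- is the lower bound \eqref{equ:gradlowerbound}: no recovery-type estimator can be asymptotically exact at a quantity that itself superconverges, so a hypothesis of this saturation type is indispensable, and here it is exactly what turns the absolute bound on $\|\nabla u-G_hu_h\|_{0,\Omega}$ into a relative one. The remainder is bookkeeping of exponents, where the factor $\min\{1,\rho\}$ (rather than $\rho$) enters because the supercloseness input Theorem \ref{thm:linearsuperclose} only delivers $h^{1+\rho}$ with $\rho\le 1/2$ in the linear case, while Theorem \ref{thm:quadraticsuperclose} supplies a full extra order in the quadratic case.
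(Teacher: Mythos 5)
Your proposal is correct and follows essentially the same route as the paper: the paper's proof is exactly the (reverse) triangle inequality bounding $|\eta_h-\|\nabla u-\nabla u_h\|_{0,\Omega}|$ by the recovery error, combined with the superconvergence estimates of Theorems \ref{thm:grsuperconvergence}/\ref{equ:grquadsuper} for the numerator and the saturation assumption \eqref{equ:gradlowerbound} for the denominator. Your write-up is in fact slightly more explicit than the paper's two-line argument (which only records one direction of the triangle inequality), and your remark on why the lower bound is indispensable matches the paper's accompanying discussion.
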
  
\begin{proof}
	By the triangle inequality, we can deduce that 
	\begin{equation*}
		\begin{aligned}
			\eta_h \le \|G_h u_h - \nabla u\|_{0, \Omega} + \| \nabla u  - \nabla u_h \|_{0, \Omega}.
		\end{aligned}
	\end{equation*}
	Using \eqref{equ:gradlowerbound} and the superconvergence results of $G_h$, we obtain that 
	\begin{equation*}
		\left| \frac{\eta_h}{\|\nabla u - \nabla u_h\|_{0, \Omega}} -1 \right|  \lesssim \mathcal{O}(h^{k-1+ \min\{1, \rho\}}),
	\end{equation*}
	which concludes the proof. 
\end{proof}

\begin{remark}
	The assumption in \eqref{equ:gradlowerbound} is  the lower bounds approximation results.  Its proof can be found in \cite{lin2014femlowerbound}. 
\end{remark}

\begin{remark}
	The mesh conditions in Theorem \ref{thm:grsuperconvergence} or Theorem \ref{equ:grquadsuper} have been extended to the adaptively refined meshes in \cite{wu2007superconvergenceonadaptivemesh}.  In particular, Wu and Zhang showed in \cite{wu2007superconvergenceonadaptivemesh} that if the meshes satisfy the so called Condition $(\alpha, \sigma, \mu)$ for some $\alpha\ge 0$, $\sigma \ge 0$, and $\mu > 0$, the recovered gradient $G_hu_h$ is superconvergent. 
	\end{remark}

\begin{remark}
  Although we can only prove the asymptotical exactness of the recovery type {\it a posteriori} error on adaptive refined meshes for linear and quadratic elements,  the methodology works for arbitrary high-order elements. It provides a uniform framework to construct an asymptotically exact {\it a posteriori} error estimator for practical purpose. 
\end{remark}

\begin{remark}
    As noted in \cite{wu2007superconvergenceonadaptivemesh}, it is well established that the solution  $u$  may exhibit singularities at the corners of  $\Omega$. Given that the treatment of multiple singular points does not differ from that of a single point, we assume, without loss of generality, that the solution  u has a singularity at the origin $O$.  We decompose the solution  u  into a singular part  $v$  and a smooth part  $w$  as follows:
    \begin{equation*}
        u = v+w
    \end{equation*}
    where 
    \begin{equation*}
        \left|\frac{\partial^m v}{\partial x^i \partial y^{m-i}}\right| \lesssim r^{\delta-m} \text { and }\left|\frac{\partial^m w}{\partial x^i \partial y^{m-i}}\right| \lesssim 1,
    \end{equation*}
for  $ m=1, \ldots, k+2$ and $ i=0, \ldots, m$. In the above equation, $r = \sqrt{x^2+y^2}$.  Under this decomposition, we can establish the superconvergence of recovery techniques on adaptively refined meshes.
\end{remark}

\subsection{Fourth-order elliptic equations}
In this subsection, we consider the model equation given by the following fourth-order elliptic partial differential equation:
\begin{equation}\label{equ:biharmonicmodel}
	\begin{split}
			\Delta^2u = f,  & \quad x\in \Omega, \\
	 u = \frac{\partial  u}{\partial n}  = 0, & \quad  x\in \partial\Omega,
	\end{split}
\end{equation}
where $f \in L^2(\Omega)$ and $n$ denotes the unit outward normal vector to the boundary  $\partial\Omega$.

We first define the local $H^2$ space as follows:
\begin{equation}\label{equ:localh2}
	H^2(\Omega, \mathcal{T}_h) = \left\{v\in L^2(\Omega): v_K = v|_K \in H^2(K), \quad
	\forall K \in \mathcal{T}_h  \right\}.
\end{equation}
Let $\mathcal{E}_h$ denote the set of edges in $\mathcal{T}_h$. For any interior edge $E \in \mathcal{E}_h$, let $K_1$ and $K_2$ be the two elements sharing $E$ as a common edge, and let $n_E$ be the unit outer normal vector of $E$ oriented from $K_1$ to $K_2$. For a function $v \in H^2(\Omega, \mathcal{T}_h)$, we define the averaging and jump of $v$ as
\begin{equation}
	\dgal{\frac{\partial^2 v}{\partial n^2} } = \frac12\left(\left. \frac{\partial^2 v_{K_2}}{\partial n_E^2}\right|_E + \left.\frac{\partial^2 v_{K_1}}{\partial n_E^2}\right|_E\right)
\text{  and  }
\llbracket \frac{\partial v}{\partial n} \rrbracket =\left. \frac{\partial v_{K_2}}{\partial n_E}\right|_E - \left.\frac{\partial v_{K_1}}{\partial n_E}\right|_E.
\end{equation}
These definitions are independent of the orientation of $n_E$.
For any boundary edge $E \in \mathcal{E}_h$, let $n_E$ denote the unit normal pointing outward from $\Omega$. In this case, the averaging and jump of $v$ are defined by
\begin{equation}
	\dgal{\frac{\partial^2 v}{\partial n^2} } =  - \frac{\partial^2 v}{\partial n_E^2}
\text{ and }
\llbracket\frac{\partial v}{\partial n}\rrbracket  = - \frac{\partial v}{\partial n_E}.
\end{equation}
Define the discrete bilinear form $B_h(\cdot, \cdot)$ as
\begin{equation}
\begin{split}
	B_h(v, w) = &\sum_{T\in\mathcal{T}_h} \int_T D^2v:D^2wdx
	+ \sum_{E\in\mathcal{E}_h}\int_E \dgal{\frac{\partial^2 v}{\partial n^2}}\llbracket\frac{\partial w}{\partial n} \rrbracket ds+\\
	&\sum_{E\in\mathcal{E}_h}\int_E \dgal{\frac{\partial v}{\partial n}}
	\llbracket\frac{\partial^2 w}{\partial n^2}\rrbracket ds + \sum_{E\in\mathcal{E}_h}\frac{\gamma}{|E|}\int_E \llbracket\frac{\partial v}{\partial n} \rrbracket
	\llbracket \frac{\partial w}{\partial n} \rrbracket ds,
\end{split}
\end{equation}
where $\gamma$ is the penalty parameter.
The $C^0$ interior penalty (C0IP) method \cite{engel2002c0ip, brenner2005c0ip} for the model problem \eqref{equ:biharmonicmodel} reads as: find $u_h \in S_{h,0}$ such that
\begin{equation}\label{equ:c0ip}
	B_h(u_h, v_h) = (f, v_h) \quad \forall v_h \in S_{h,0}.
\end{equation}

The local {\it a posteriori} error estimator on each element $K\in\mathcal{T}_h$ is defined as \cite{cai2024superconvergentpostprocessingc0interior}
\begin{equation}\label{equ:fourthorder_localerrorestimator}
	\eta_{K, h} := \|  H_h u_h - H u_h \|_{0, K},
\end{equation}
and the corresponding global {\it a posteriori}  error estimator as
\begin{equation} \label{equ:fourthorder_globalerrorestimator}
	\eta_{h} :=  \left(\sum_{K\in\mathcal{T}_h} \| H_h u_h - H u_h \|_{0, K}^2\right)^{1/2}.
\end{equation}

\begin{remark}
For the recovered Hessian matrix $H_h u_h$ of the C0IP method, Cai et al.~\cite{cai2024superconvergentpostprocessingc0interior} proved the following superconvergence result on translation invariant meshes:
\begin{equation}\label{equ:c0ip_error}
		\|Hu-H_hu_h\|_{0, \Omega_0} \lesssim h^{k}\|u\|_{k+2,\Omega}+\|u-u_h\|_{-s,q,\Omega}
\end{equation}
for $k\geq2$ and for some $s \geq 0$, $q \geq 1$. Similar estimates in the $L^\infty$ norm are also presented. Here, $\Omega_0$ refers to a domain similar to the one in Theorem \ref{thm:ultra}. This implies that the error estimator in \eqref{equ:fourthorder_globalerrorestimator} is asymptotically exact, at least on translation invariant meshes. Consequently, it provides a simple and asymptotically exact \textit{a posteriori} error estimator for the C0IP method. 
\end{remark}

\begin{remark}
	For the second-order elliptic problem, the energy norm is based on the gradients. In contrast, for the fourth-order elliptic problem, the Hessian matrix is included in the energy norm. The recovery type \textit{a posteriori} error estimator is defined by replacing the Hessian matrix with the recovered Hessian matrix.
\end{remark}

\subsection{Elliptic interface problems}
In this subsection, we consider the recovery type {\it a posteriori} error estimator for finite element approximations of the interface problem  \eqref{equ:interface_equ}--\eqref{equ:interface_fluxjump}. 
Suppose $u_h = (u^-_h, u^+_h)$ is the numerical solutions using either the body-fitted finite element method \eqref{equ:interface_bffem} or the cut finite element method \eqref{equ:cut_var}.

Using the gradient recovery operator $R_h$ for the interface problems, one can define a local {\it a posteriori} error estimator as 
\begin{equation}\label{equ:localind_interface}
\eta_{K, h} = \|\beta^{1/2}(R_hu_h - \nabla u_h)\|_{0, K},
\end{equation}
and the corresponding global error estimator  as
\begin{equation}\label{equ:globalind_interface}
\eta_h = \|\beta^{1/2}(R_hu_h - \nabla u_h)\|_{0, \Omega^-\cup\Omega^+}.
\end{equation}

Building upon the superconvergence results of $R_h$ in Theorem \ref{thm:bffem_superconvergence} or Theorem \ref{thm:cut_superconvergence}, we can show  the asymptotic exactness of the recovery type {\it a posteriori} error estimator in \eqref{equ:globalind_interface}: 
\begin{theorem}\label{thm:ae_interface}
	Assume the same conditions as in Theorem \ref{thm:bffem_superconvergence} or Theorem \ref{thm:cut_superconvergence}. Let $u$ be the solution to the variational problem \eqref{equ:interface}, and let $u_h$ be the finite element solution to the discrete variational problem \eqref{equ:interface_bffem} or \eqref{equ:cut_var}. Further, assume that there exists a constant $C(u) > 0$ such that
	\begin{equation} \label{equ:gradlowerbound_interface}
		\|\nabla u - \nabla u_h\|_{0, \Omega} \ge C(u) h. 
	\end{equation}
	Then, we have 
	\begin{equation}
		\left| \frac{\eta_h}{\|\nabla u - \nabla u_h\|_{0, \Omega}} -1 \right|  \lesssim \mathcal{O}(h^{k-1+ \min\{1, \rho\}}).
	\end{equation}
\end{theorem}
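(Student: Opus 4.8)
The plan is to treat this as the interface analogue of Theorem~\ref{thm:ae_secondorder}: the estimator $\eta_h$ in \eqref{equ:globalind_interface} differs from the true (weighted) energy error by the quantity $\|\beta^{1/2}(\nabla u - R_h u_h)\|_{0,\Omega^-\cup\Omega^+}$, which the superconvergence Theorem~\ref{thm:bffem_superconvergence} (body-fitted case) or Theorem~\ref{thm:cut_superconvergence} (cut case) bounds by a power of $h$ strictly higher than the optimal error rate; dividing by the lower bound \eqref{equ:gradlowerbound_interface} then closes the argument.

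\textbf{Step 1.} Insert $\nabla u$ and apply the triangle inequality in the $\beta^{1/2}$-weighted $L^2$ norm, in both directions, to get
\begin{equation*}
\bigl| \eta_h - \|\beta^{1/2}(\nabla u - \nabla u_h)\|_{0,\Omega^-\cup\Omega^+}\bigr|
\le \|\beta^{1/2}(\nabla u - R_h u_h)\|_{0,\Omega^-\cup\Omega^+}.
\end{equation*}
Since $\beta$ has only a finite discontinuity, $\beta_0 \le \beta \le \|\beta\|_{0,\infty,\Omega}$, so the weighted and unweighted norms are equivalent and one may equally work with the unweighted error norm appearing in the statement.

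\textbf{Step 2.} Bound the right-hand side by the superconvergence estimate for $R_h$. Under the hypotheses inherited from Theorem~\ref{thm:bffem_superconvergence} or Theorem~\ref{thm:cut_superconvergence}, using $\beta \le \|\beta\|_{0,\infty,\Omega}$ componentwise on $\Omega^\pm$,
\begin{equation*}
\|\beta^{1/2}(\nabla u - R_h u_h)\|_{0,\Omega^-\cup\Omega^+}
\lesssim \|\nabla u - R_h u_h\|_{0,\Omega^-\cup\Omega^+}
\lesssim h^{1+\rho}\|u\|_{3,\infty,\Omega^-\cup\Omega^+} + h^{3/2}\|u\|_{2,\infty,\Omega^-\cup\Omega^+},
\end{equation*}
with $\rho = \min(\alpha,\sigma/2,1/2)$. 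Because $\alpha,\sigma>0$ in Condition~$(\sigma,\alpha)$, we have $0<\rho\le 1/2$, so both $h^{1+\rho}$ and $h^{3/2}$ are $o(h)$, with leading term $h^{1+\rho}$ (recall the analysis here is for linear elements, $k=1$).

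\textbf{Step 3.} Divide by the error and use the lower bound. By \eqref{equ:gradlowerbound_interface}, $\|\nabla u - \nabla u_h\|_{0,\Omega}\ge C(u)h$, hence
\begin{equation*}
\left| \frac{\eta_h}{\|\nabla u - \nabla u_h\|_{0, \Omega}} -1 \right|
\le \frac{\|\beta^{1/2}(\nabla u - R_h u_h)\|_{0,\Omega^-\cup\Omega^+}}{\|\nabla u - \nabla u_h\|_{0,\Omega}}
\lesssim \frac{h^{1+\rho}}{h} = h^{\rho} = h^{\,k-1+\min\{1,\rho\}},
\end{equation*}
the last equality using $k=1$ and $\rho\le 1$, which is the claimed bound.

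The proof has no genuine obstacle of its own: all the real work is packaged in the superconvergence Theorems~\ref{thm:bffem_superconvergence} and~\ref{thm:cut_superconvergence} (which rest on the supercloseness results and Condition~$(\sigma,\alpha)$) and in the saturation-type lower bound \eqref{equ:gradlowerbound_interface}. The only points needing care are (i) the exponent bookkeeping — making sure the $h^{3/2}$ remainder (present only because $g\not\equiv 0$) and the $h^{1+\rho}$ main term both beat the $\mathcal{O}(h)$ denominator, so that $\rho>0$ is essential; and (ii) the $\beta^{1/2}$ weight in $\eta_h$, which is harmless since $\beta$ is bounded above and below but should be recorded so that the weighted and unweighted error norms can be interchanged.
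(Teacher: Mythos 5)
Your proposal is correct and follows essentially the same route as the paper's proof: a triangle inequality comparing $\eta_h$ to the error norm, the superconvergence of $R_h$ from Theorem \ref{thm:bffem_superconvergence} or Theorem \ref{thm:cut_superconvergence}, and division by the lower bound \eqref{equ:gradlowerbound_interface}. If anything you are slightly more careful than the paper, in applying the triangle inequality in both directions (needed for the absolute value) and in tracking the $\beta^{1/2}$ weight and the $h^{3/2}$ remainder explicitly.
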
  
\begin{proof}
By the triangle inequality, we can deduce that
	\begin{equation*}
		\begin{aligned}
			\eta_h \le \|R_h u_h - \nabla u\|_{0, \Omega} + \| \nabla u  - \nabla u_h \|.
		\end{aligned}
	\end{equation*}
The superconvergence results of $R_h$ in Theorem \ref{thm:bffem_superconvergence} or Theorem \ref{thm:cut_superconvergence}  imply that
	\begin{equation*}
		\left| \frac{\eta_h}{\|\nabla u - \nabla u_h\|_{0, \Omega}} -1 \right|  \lesssim \mathcal{O}(h^{ \min\{1, \rho\}}),
	\end{equation*}
where we have used the lower bound approximation property of \eqref{equ:gradlowerbound_interface}. This concludes the proof.
\end{proof}

\begin{remark}
For the cut finite element method, the local \textit{a posteriori} error estimator on the interface element $K \in \mathcal{T}_{\Gamma, h}$ should be understood as
	\begin{equation}
		\eta_{K, h}^2 =\|\beta^{1/2}(R_hu_h^- - \nabla u_h^-)\|_{0, K^-}^2 +
		  \|\beta^{1/2}(R_hu_h^+ - \nabla u_h^+)\|_{0, K^+}^2.
	\end{equation}
\end{remark}

\section{Numerical illustrations}\label{sec:ne}
In this section, we present a series of benchmark numerical examples to illustrate the performance of the recovery type {\it a posteriori} error estimators introduced in Section \ref{sec:estimator}. As outlined in the introduction, the adaptive finite element method comprises the following iterative loop: Solve, Estimate, Mark, and Refine.

To complete the loop of adaptive computation, it is essential to employ a strategy for selecting a subset of elements for refinement. In this study, we adopt the bulk marking strategy proposed by D{\"o}rfler \cite{dorfler1996adaptiveconvergence}.  For a given constant $\zeta \in (0,1]$, the bulk marking strategy identifies a subset $\mathcal{M}_h \subset \mathcal{T}_h$ satisfying the condition:
$$
\left(\sum_{K \in \mathcal{M}_h} \eta_{K,h}^2\right)^{\frac{1}{2}} \geq \zeta\left(\sum_{K \in \mathcal{T}_h} \eta_{K,h}^2\right)^{\frac{1}{2}},
$$
where $\mathcal{M}_h$ is chosen to minimize its cardinality. For all numerical examples presented in this section, the parameter $\zeta$ is set to $0.2$ and the rates of convergence are measured in terms of  $N$,  where  $N$ represents the number of vertices in the mesh.  The rates of convergence in terms of  $h$  are double of those measured in terms of  $N$.

Once the subset of marked elements is determined, we perform local refinement. In this work, we employ the newest vertex bisection method \cite{mitchell2017newestvertexbisection, binev2003adaptive}. The main idea of the newest bisection method is to connect one vertex to the midpoint of its opposite edge. For implementation, the longest edge of each triangle is labeled as the refinement edge. To ensure a conforming mesh, neighboring elements may also be bisected. We want to remark that the recovery techniques are independent of local refinement methods. They work well with other local refinement strategies, such as the red-green algorithm \cite{bank1981redgreen}, or even meshes with hanging nodes, as it is a unified meshfree framework.

\subsection{Numerical examples of second-order elliptic problems}\label{ssec:second}
In this subsection, we consider the performance of the recovery type {\it a posteriori} error estimator \eqref{equ:secondorder_localerrorestimator} for the second-order elliptic equations.  We use linear element for the first example while the quadratic element is used for the other two examples.

\textbf{Numerical example 1} In this example, the computational domain is $$\Omega = (-1, 1)^2\backslash \{(x, 0), 0\le x \le 1 \}$$. We consider the Poisson equation 
\begin{equation}
	 - \Delta u = 1.
\end{equation}
The Dirichlet boundary condition is specified to match the exact solution:
\begin{equation*}
	u =  \frac{\sqrt{2}}{2}\sqrt{r-x} - \frac{1}{4}r^2, 
	\end{equation*}
where $r = \sqrt{x^2+y^2}$.

\begin{figure}[!h]
   \centering
   \subcaptionbox{\label{fig:adaptvemesh_crack}}
  {\includegraphics[width=0.39\textwidth]{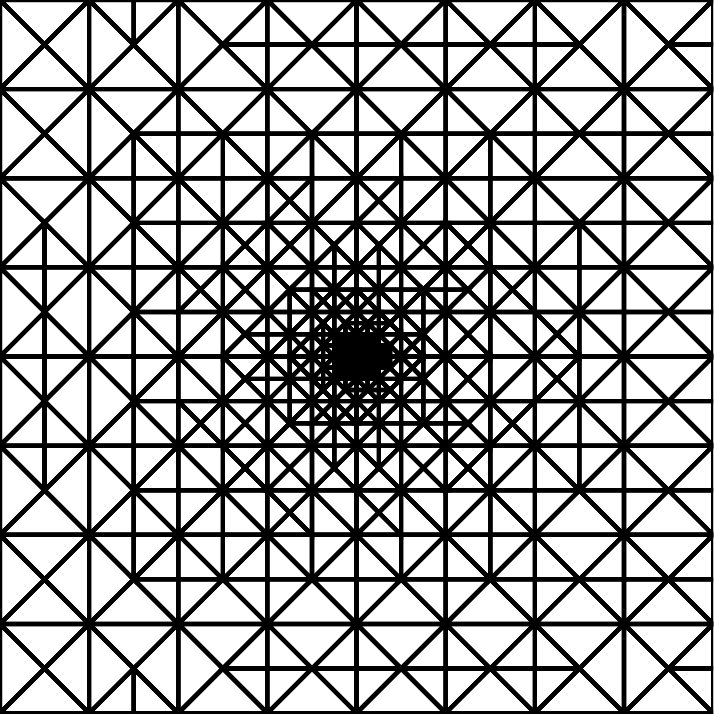}}
 \hspace{0.2in}
  \subcaptionbox{\label{fig:sol_crack}}
   {\includegraphics[width=0.54\textwidth]{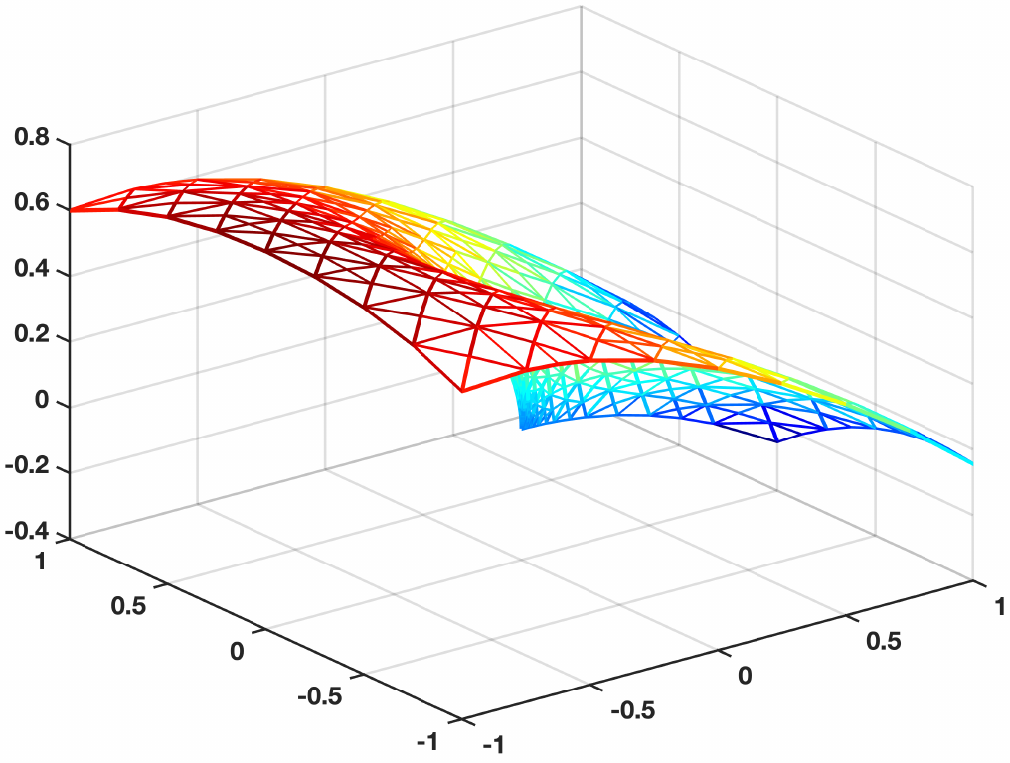}}
   \caption{Adaptive mesh and numerical solution for the crack problem using linear elements: (a) adaptively refined mesh; (b) finite element solution.}\label{fig:mesh_crack}
\end{figure}

\begin{figure}[!h]
   \centering
   \subcaptionbox{\label{fig:error_crack}}
  {\includegraphics[width=0.47\textwidth]{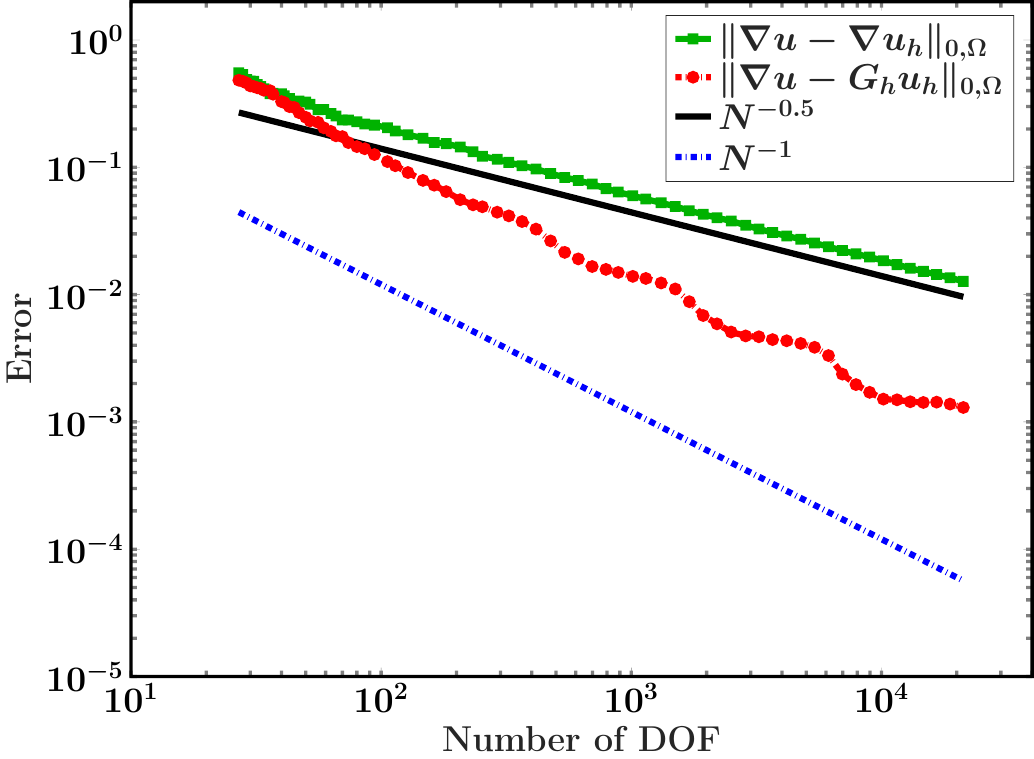}}
   \hspace{0.2in}
    \subcaptionbox{\label{fig:effectiveindex_crack}}
   {\includegraphics[width=0.45\textwidth]{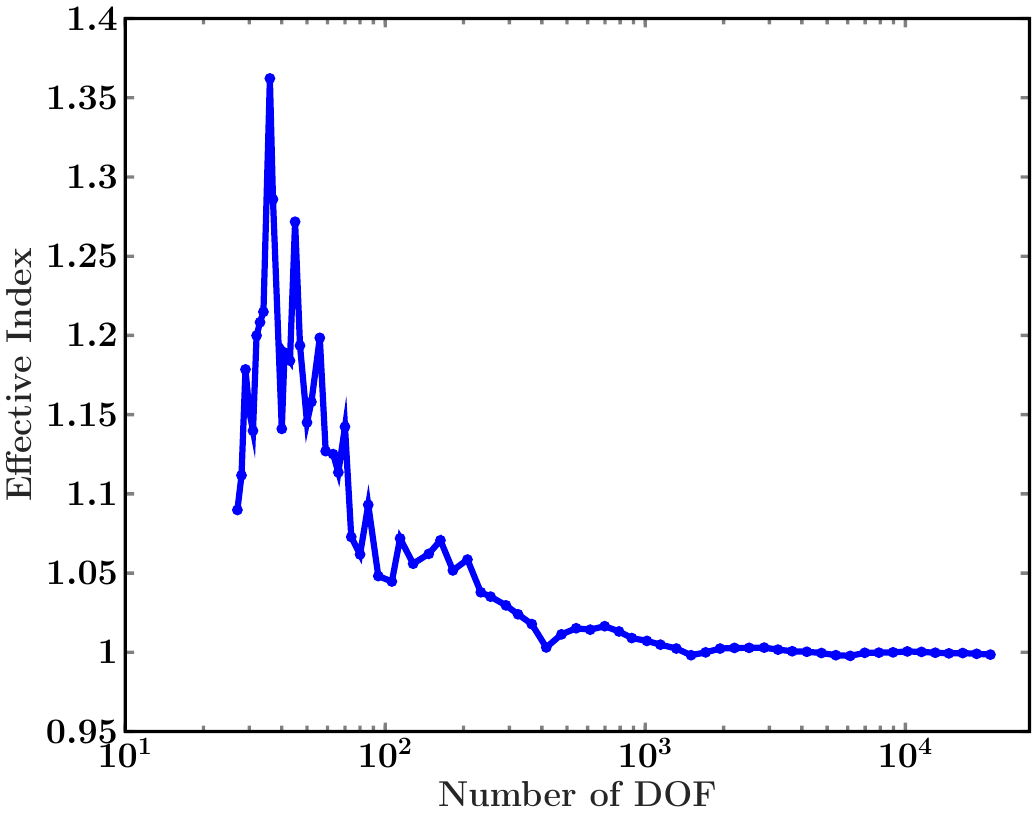}}
\caption{Numerical results for the crack problem using linear elements: (a) numerical error; (b) plot of the effective index. }
\label{fig:result_crack}
\end{figure}

The solution exhibits singular behavior near the origin. To recover the optimal convergence rate, we employ the adaptive finite element method (AFEM) incorporating the {\it a posteriori} error estimator \eqref{equ:secondorder_localerrorestimator}. Figure \ref{fig:adaptvemesh_crack} displays the adaptively refined mesh. The figure illustrates the effectiveness of the error estimator \eqref{equ:secondorder_localerrorestimator} in capturing the singularity.

The convergence history is summarized in Figure \ref{fig:error_crack}. From the results, it is evident that $\|\nabla u - \nabla u_h\|_{0, \Omega}$ decreases at the optimal rate of $\mathcal{O}(N^{-0.5})$, while $\|\nabla u - G_h u_h\|_{0, \Omega}$ achieves a superconvergent rate of $\mathcal{O}(N^{-1})$. In Figure \ref{fig:effectiveindex_crack}, the effective index of the error estimator is presented. This confirms that the error estimator \eqref{equ:secondorder_localerrorestimator} is asymptotically exact, as predicted by Theorem~\ref{thm:ae_secondorder}.

\vspace{0.1in}

\textbf{Numerical example 2} In this example, we consider the Poisson equation on the unit square $\Omega = (0,1)^2$. The exact solution is given by
\begin{equation*}
u(x, y) = \tan^{-1}(a(r - r_0)),
\end{equation*}
where $r = \sqrt{(x - x_0)^2 + (y - y_0)^2}$. For this test, we set $r_0 = 0.7$, $a = 50$, and $(x_0, y_0) = (-0.05, -0.05)$. The solution is smooth but exhibits a steep interior layer, as illustrated in Figure \ref{fig:sol_wavefront}.

The initial mesh is the uniform mesh of regular pattern with 32 triangles. The interior sharp layer is totally unresolved by the initial mesh which causes the major difficulties.  Figure \ref{fig:adaptvemesh_wavefront}  is the mesh generated by the adaptive finite  element method. It is obvious that the mesh is refined to resolve the interior layer as expected.

\begin{figure}[!ht]
   \centering
   \subcaptionbox{\label{fig:adaptvemesh_wavefront}}
  {\includegraphics[width=0.39\textwidth]{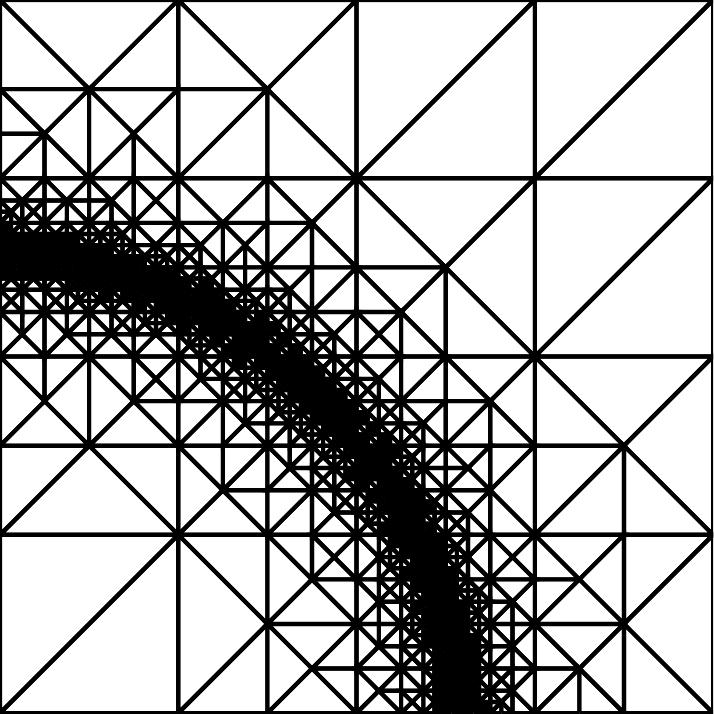}}
 \hspace{0.2in}
  \subcaptionbox{\label{fig:sol_wavefront}}
   {\includegraphics[width=0.54\textwidth]{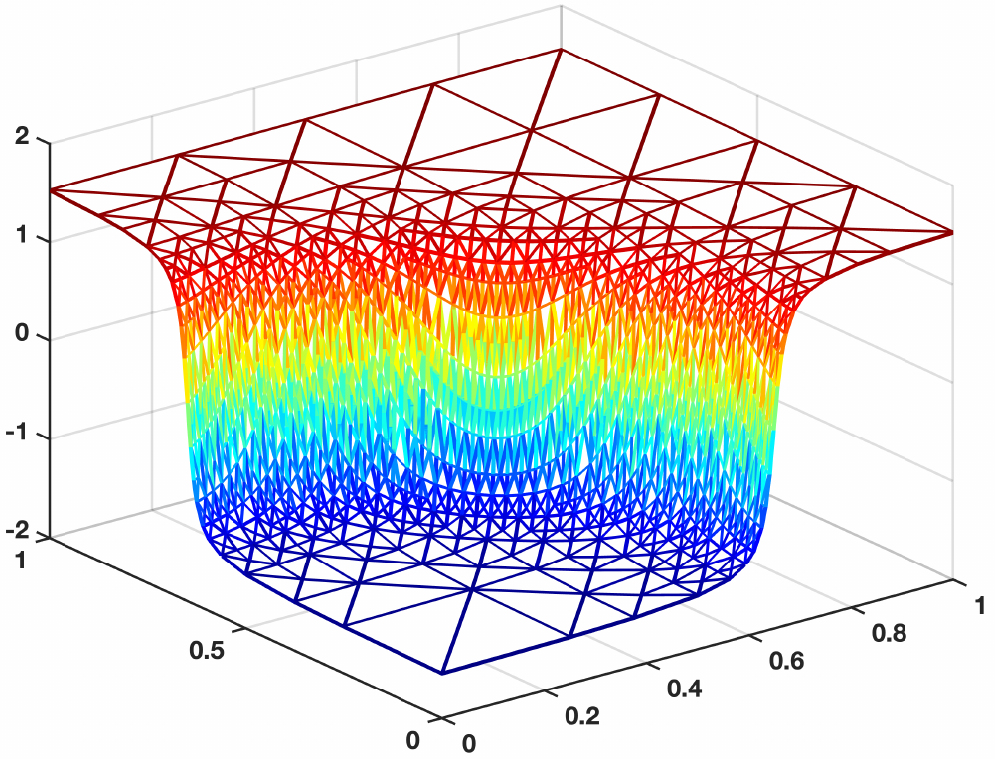}}
   \caption{Adaptive mesh and numerical solution for the problem with an interior layer using linear elements: (a) adaptively refined mesh; (b) finite element solution. }\label{fig:mesh_wavefront}
\end{figure}

\begin{figure}[!ht]
   \centering
   \subcaptionbox{\label{fig:error_wavefront}}
  {\includegraphics[width=0.47\textwidth]{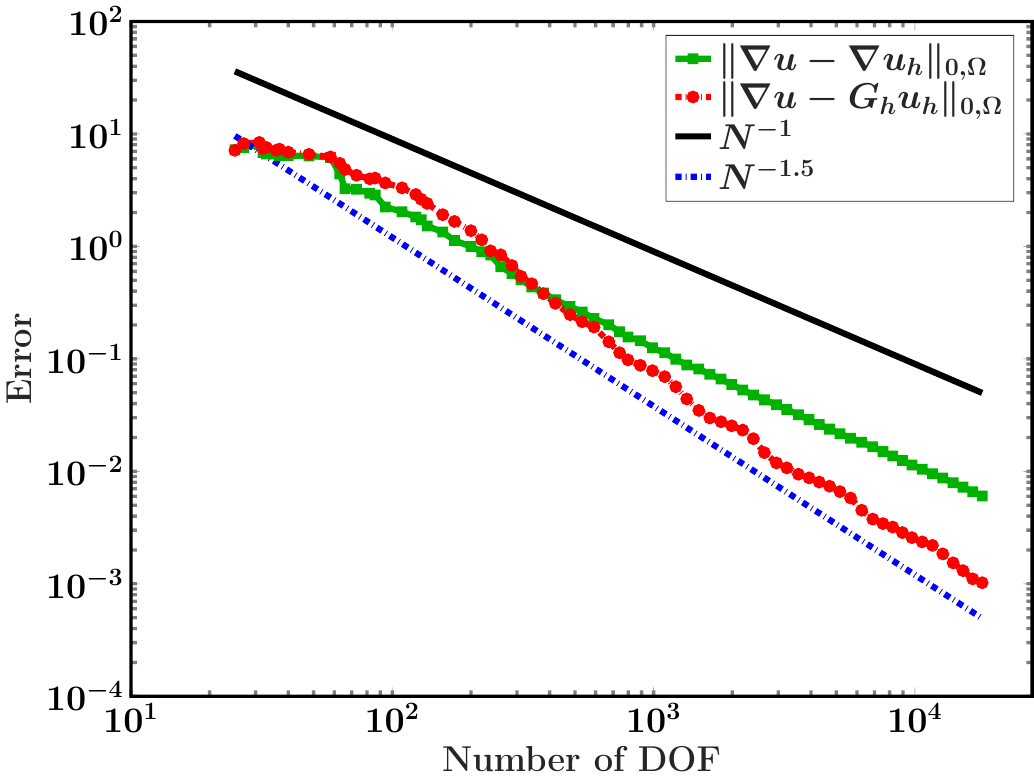}}
   \hspace{0.2in}
    \subcaptionbox{\label{fig:effectiveindex_wavefront}}
   {\includegraphics[width=0.45\textwidth]{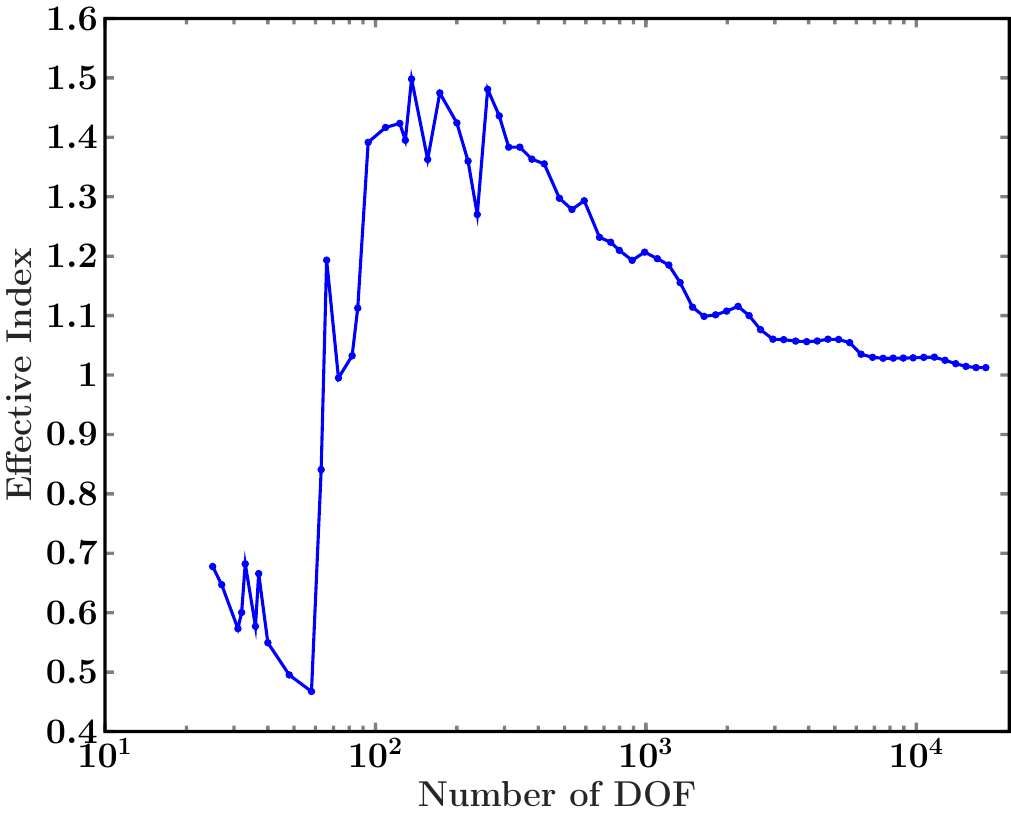}}
\caption{Numerical results for the problem with an interior layer using quadratic elements: (a) numerical error; (b) plot of the effective index.}
\label{fig:result_wavefront}
\end{figure}

In Figure \ref{fig:result_wavefront}, we present the qualitative results. As anticipated, the desired $\mathcal{O}(N^{-1})$ optimal convergence rate for the finite element gradient and $\mathcal{O}(N^{-1.5})$ superconvergence rate for the recovered gradient is observed. Additionally, the limit of the effective index is numerically proven to be one, which validates the asymptotic exactness of the error estimator \eqref{equ:secondorder_localerrorestimator}.

\vspace{0.1in}

\textbf{Numerical example 3} In this example, we consider the Poisson equation on the unit square, where the exact solution is given in \cite{guo2019grvem}
$$
u(x,y) = \frac{1}{2\pi \sigma }\left[ e^{-\frac{1}{2}\left( \frac{x-\mu_1}{\sigma}\right)^2}
e^{-\frac{1}{2}\left( \frac{y-\mu_1}{\sigma}\right)^2} +
e^{-\frac{1}{2}\left( \frac{x-\mu_2}{\sigma}\right)^2}
e^{-\frac{1}{2}\left( \frac{y-\mu_2}{\sigma}\right)^2}
\right].
$$
The standard deviation is chosen to be $ \sigma = \sqrt{10^{-3}} $, and the two mean values are $ \mu_1 = 0.25 $ and $ \mu_2 = 0.75 $.

The difficulty of this problem arises from the presence of two Gaussian surfaces, as shown in Figure \ref{fig:sol_gaussian}, where the solution decays rapidly. The initial mesh consists of 32 uniform triangles, which do not sufficiently capture the Gaussian surfaces. In Figure \ref{fig:adaptvemesh_gaussian}, we present the adaptively refined mesh, which is evidently refined around the locations of the Gaussian surfaces. Figure \ref{fig:error_gaussian} displays the numerical errors. Consistent with Numerical Example 2, the expected optimal and superconvergent results are observed. Additionally, the asymptotic accuracy of the error estimator \eqref{equ:secondorder_localerrorestimator} is numerically confirmed in Figure \ref{fig:effectiveindex_gaussian}, where the effective index converges to one.

\begin{figure}[!ht]
   \centering
   \subcaptionbox{\label{fig:adaptvemesh_gaussian}}
  {\includegraphics[width=0.39\textwidth]{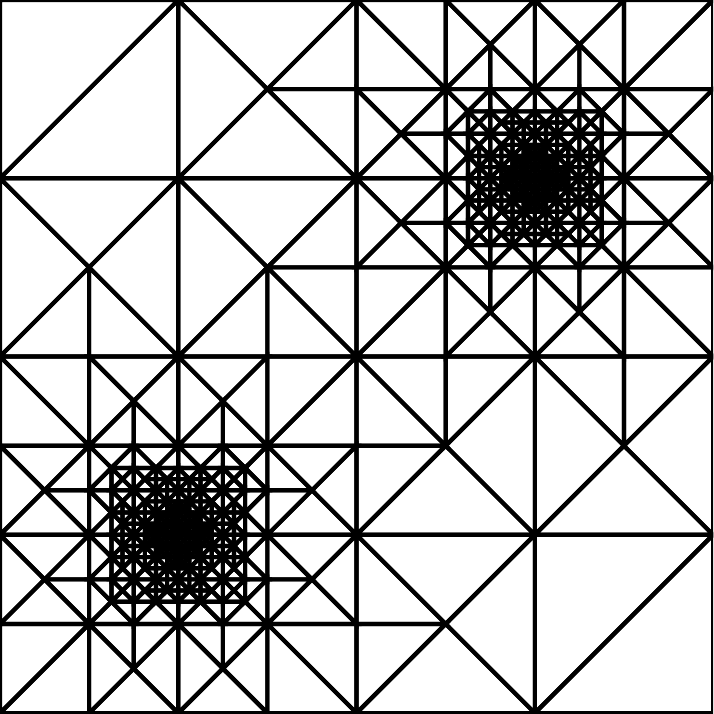}}
 \hspace{0.2in}
  \subcaptionbox{\label{fig:sol_gaussian}}
   {\includegraphics[width=0.54\textwidth]{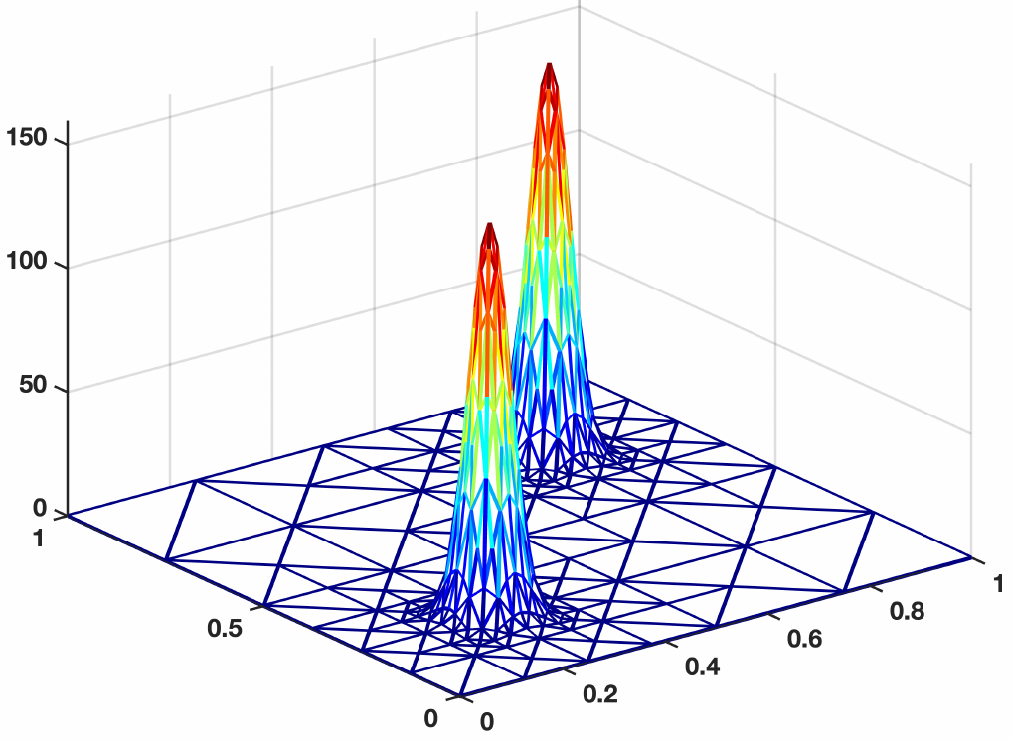}}
   \caption{Adaptive mesh and numerical solution for the Gaussian problem using quadratic elements: (a) adaptively refined mesh; (b) finite element solution.}\label{fig:mesh_gaussian}
\end{figure}

\begin{figure}[!ht]
   \centering
   \subcaptionbox{\label{fig:error_gaussian}}
  {\includegraphics[width=0.47\textwidth]{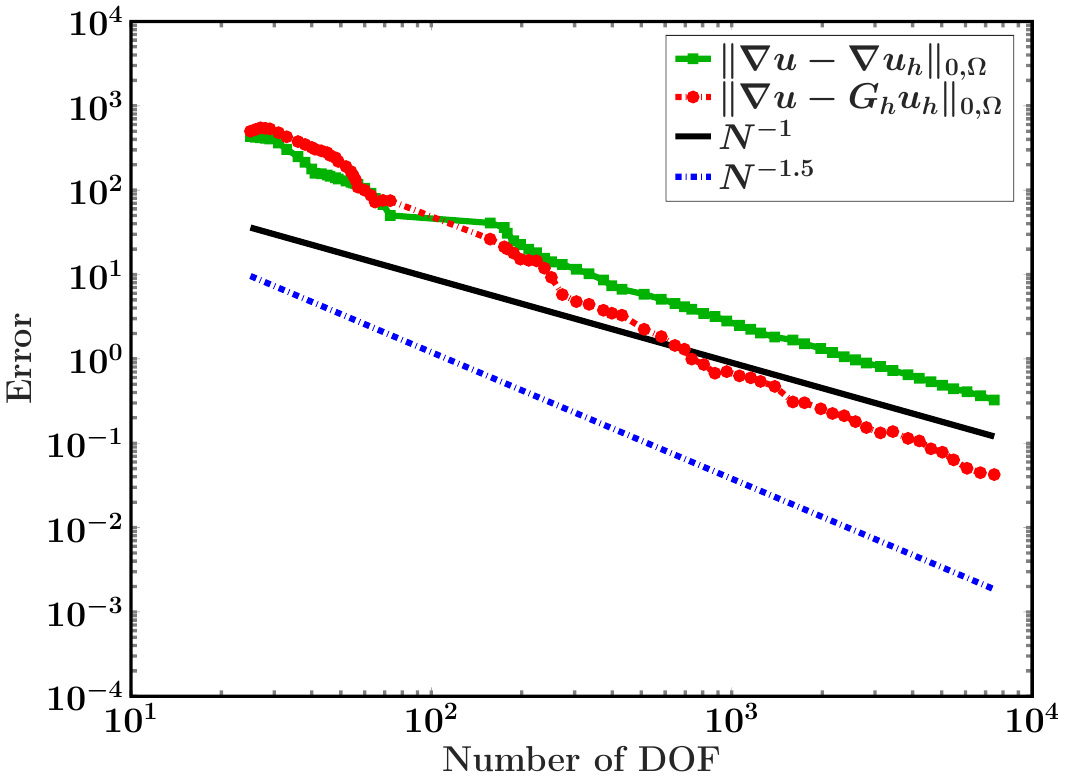}}
   \hspace{0.1in}
    \subcaptionbox{\label{fig:effectiveindex_gaussian}}
   {\includegraphics[width=0.45\textwidth]{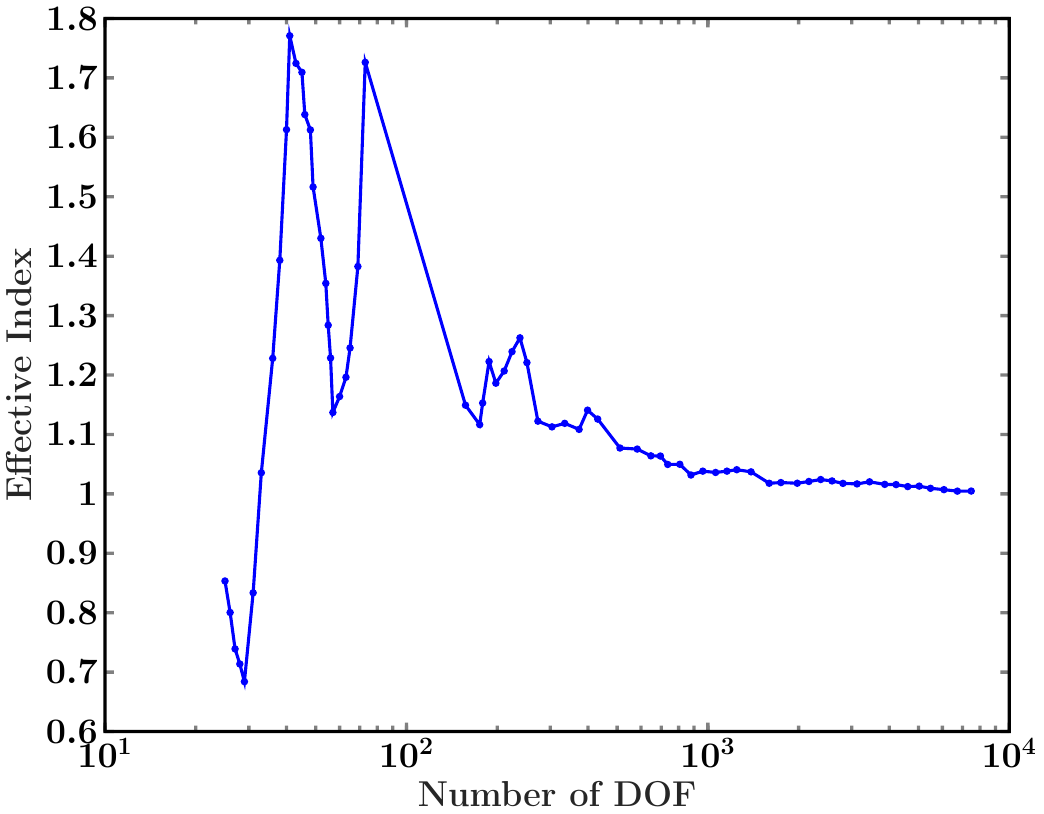}}
\caption{Numerical results for the Gaussian problem using quadratic elements: (a) numerical error; (b) plot of the effective index.}
\label{fig:result_gaussian}
\end{figure}

\subsection{Numerical examples  of fourth-order problems}\label{ssec:fourth}
In this subsection, we consider the performance the recovery type {\it a posteriori} error estimator \eqref{equ:fourthorder_localerrorestimator} for the C0IP method.  

\textbf{Numerical example 4} In this example, we consider the biharmonic equation \eqref{equ:biharmonicmodel} defined over the Lshaped domain $\Omega = (-1, 1)^2 \backslash ([0, 1)\times(-1, 0])$, as investigated in \cite{brenner2010adaptivec0ip}.  The exact singular solution in polar coordinates is given by 
\begin{equation}\label{equ:biharmonic_exactsolution}
	u(r, \theta) = (r^2\cos^2(\theta)-1)^2 (r^2\sin^2(\theta)-1)^2 g(\theta), 
\end{equation}
where the function $g(\theta)$ is defined to be
\begin{equation*}
	\begin{aligned}
		g(\theta) =  &  [\frac{1}{z-1}\sin((z-1)\omega) - \frac{1}{z+1}\sin((z+1)\omega)  ] 
		\times \left[\cos((z-1)\theta) - \cos((z+1)\theta) \right] \\
		&-[\frac{1}{z-1}\sin((z-1)\theta) - \frac{1}{z+1}\sin((z+1)\theta)  ] 
		 \times \left[\cos((z-1)\omega) - \cos((z+1)\omega) \right].
	\end{aligned}
\end{equation*}
Here,  $z= 0.544483736782464$ is a noncharacteristic root of $\sin^2(z\omega)=z^2\sin^2(\omega)$,  and $\omega = \frac{3\pi}{2}$.

\begin{figure}[!ht]
   \centering
   \subcaptionbox{\label{fig:adaptvemesh_brenner}}
  {\includegraphics[width=0.39\textwidth]{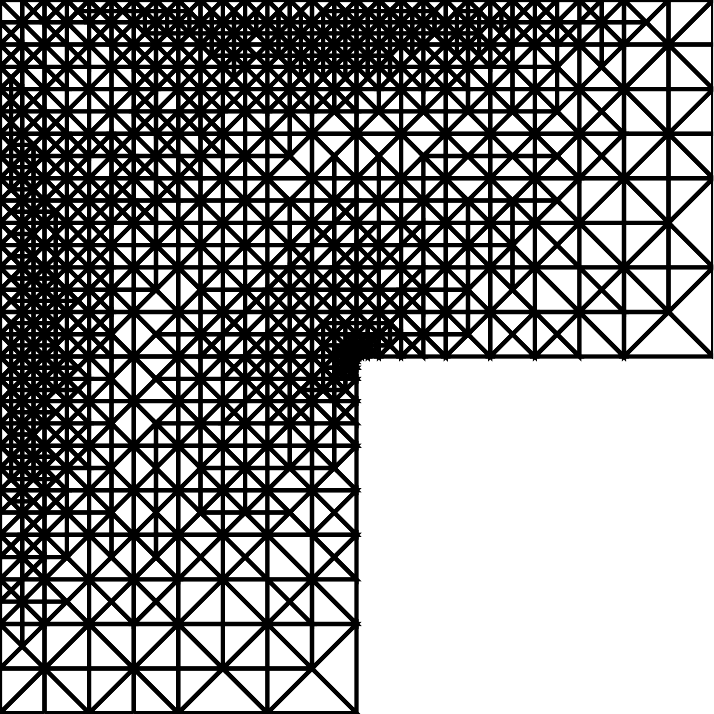}}
 \hspace{0.2in}
  \subcaptionbox{\label{fig:sol_brenner}}
   {\includegraphics[width=0.54\textwidth]{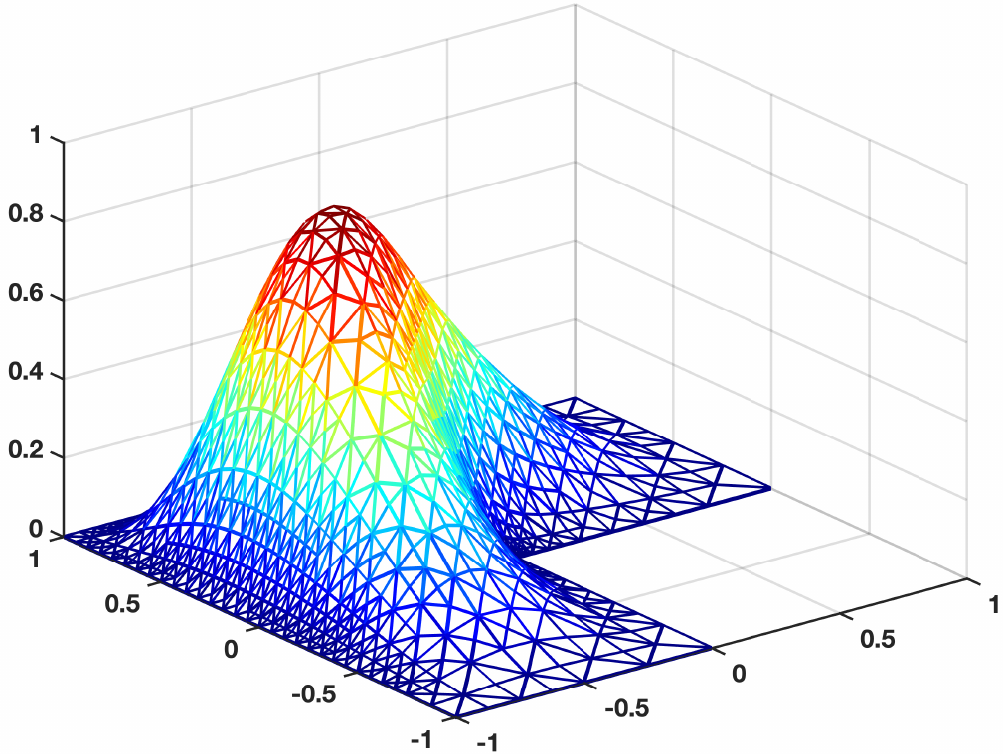}}
   \caption{Adaptive mesh and numerical solution for the biharmonic equation on the Lshape domain using the quadratic C0IP method: (a) adaptively refined mesh; (b) finite element solution. }\label{fig:mesh_gaussian1}
\end{figure}

\begin{figure}[!ht]
   \centering
   \subcaptionbox{\label{fig:error_brenner}}
  {\includegraphics[width=0.47\textwidth]{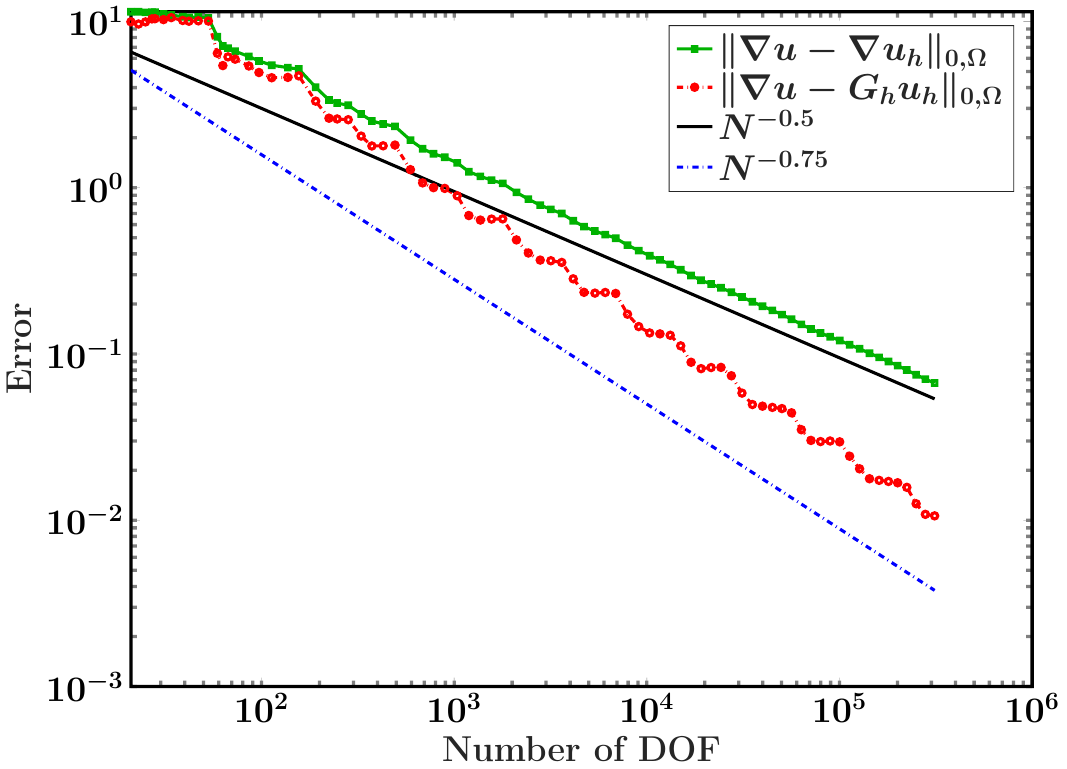}}
   \hspace{0.1in}
    \subcaptionbox{\label{fig:effectiveindex_brenner}}
   {\includegraphics[width=0.45\textwidth]{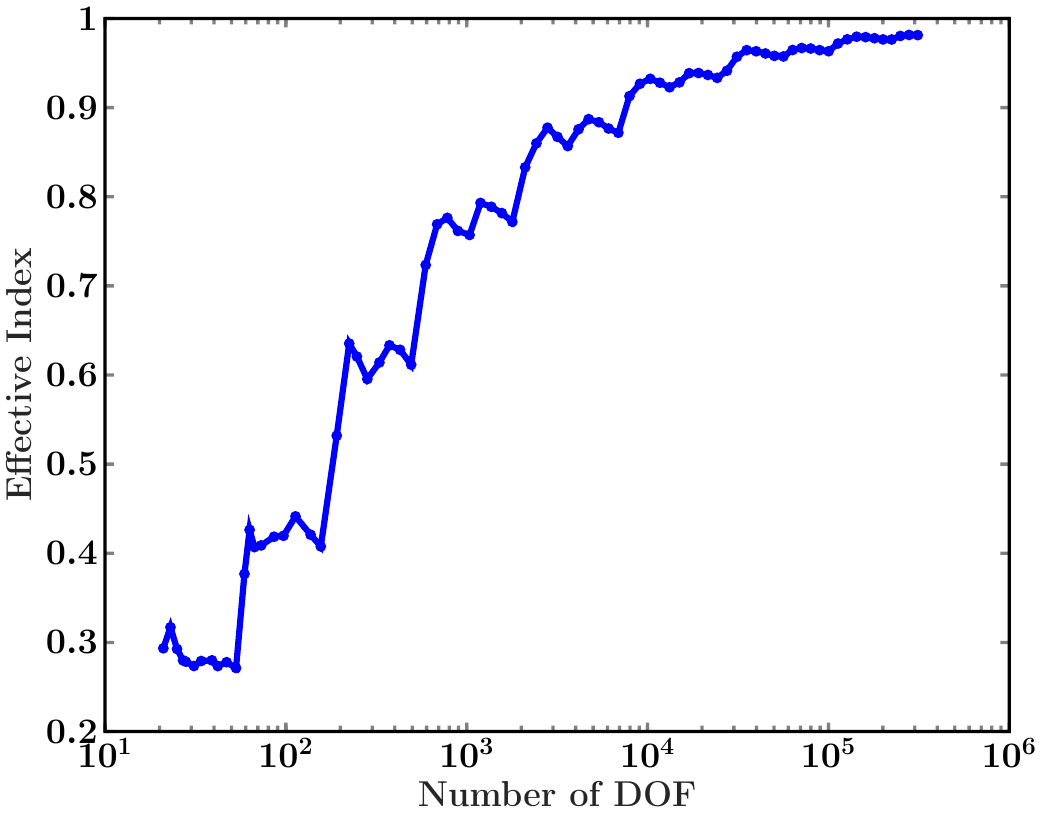}}
\caption{Numerical results for the biharmonic equation on the Lshape domain using the quadratic C0IP method: (a) numerical error; (b) plot of the effective index. }
\label{fig:result_gaussian1}
\end{figure}

As illustrated in Figure \ref{fig:sol_brenner}, the solution $u$ exhibits sharp transitions along two long edges and a singularity near the origin. The initial computational domain is discretized using a uniform mesh of regular pattern consisting of 24 triangles. In Figure \ref{fig:adaptvemesh_brenner}, the adaptively refined mesh is depicted. It is evident that the refinement effectively resolves both the singularity and the regions of sharp transitions.

The numerical errors are presented in Figure \ref{fig:error_brenner}. Examination of the data reveals that the discrete $H^2$ error converges at the optimal rate of $\mathcal{O}(N^{-0.5})$, while the recovered $H^2$ error achieves a superconvergent rate of $\mathcal{O}(N^{-0.75})$. In Figure \ref{fig:effectiveindex_brenner}, the effective index of the recovery-based {\it a posteriori} error estimator \eqref{equ:fourthorder_localerrorestimator} is plotted.  The effective index $\kappa$ asymptotically approaches one, thereby affirming the asymptotic exactness of the error estimator \eqref{equ:fourthorder_localerrorestimator}.

\vspace{0.1in}

\textbf{Numerical example 5} In this example, we consider the computational domain $\Omega$ with a reflex angle, as shown in Figure \ref{fig:mesh_obtuseangle}. The biharmonic equation \eqref{equ:biharmonicmodel} is solved on $\Omega$, with the exact solution given by \eqref{equ:biharmonic_exactsolution}, where $z = 0.505009698896589$ and $\omega = \frac{7\pi}{4}$, as in \cite{brenner2010adaptivec0ip}.

Similar to Example 4, the singularity and regions of sharp transitions are effectively captured by the adaptively refined mesh. Figure \ref{fig:error_obtuseangle} displays the numerical errors in terms of degrees of freedom. The figure demonstrates the optimal decay of the discrete $H^2$ error. Additionally, it is observed that the recovered gradient error superconverges at a rate of $\mathcal{O}(N^{-0.75})$. In Figure \ref{fig:effectiveindex_obtuseangle}, the effectivity index is plotted. This figure reveals that the effectivity index is close to 1, confirming that the recovery type {\it a posteriori} error estimator \eqref{equ:fourthorder_localerrorestimator} is asymptotically exact.

Compared to the residual type {\it a posteriori} error estimator for the C0IP method in \cite{brenner2010adaptivec0ip}, the error estimator \eqref{equ:fourthorder_localerrorestimator} is much simpler in formulation and asymptotically exact. As a byproduct, it provides a superconvergent discrete Hessian matrix.

\begin{figure}[!ht]
   \centering
   \subcaptionbox{\label{fig:adaptvemesh_obtuseangle}}
  {\includegraphics[width=0.39\textwidth]{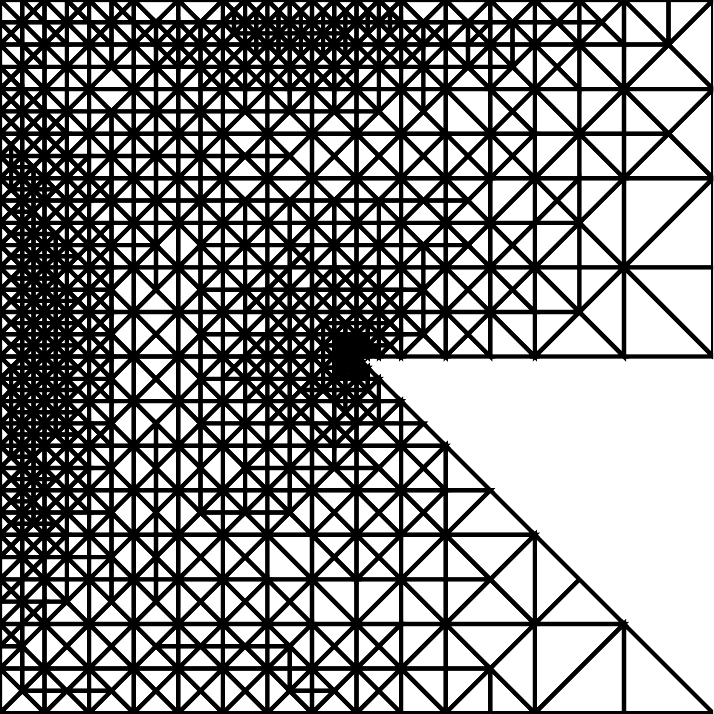}}
 \hspace{0.2in}
  \subcaptionbox{\label{fig:sol_obtuseangle}}
   {\includegraphics[width=0.54\textwidth]{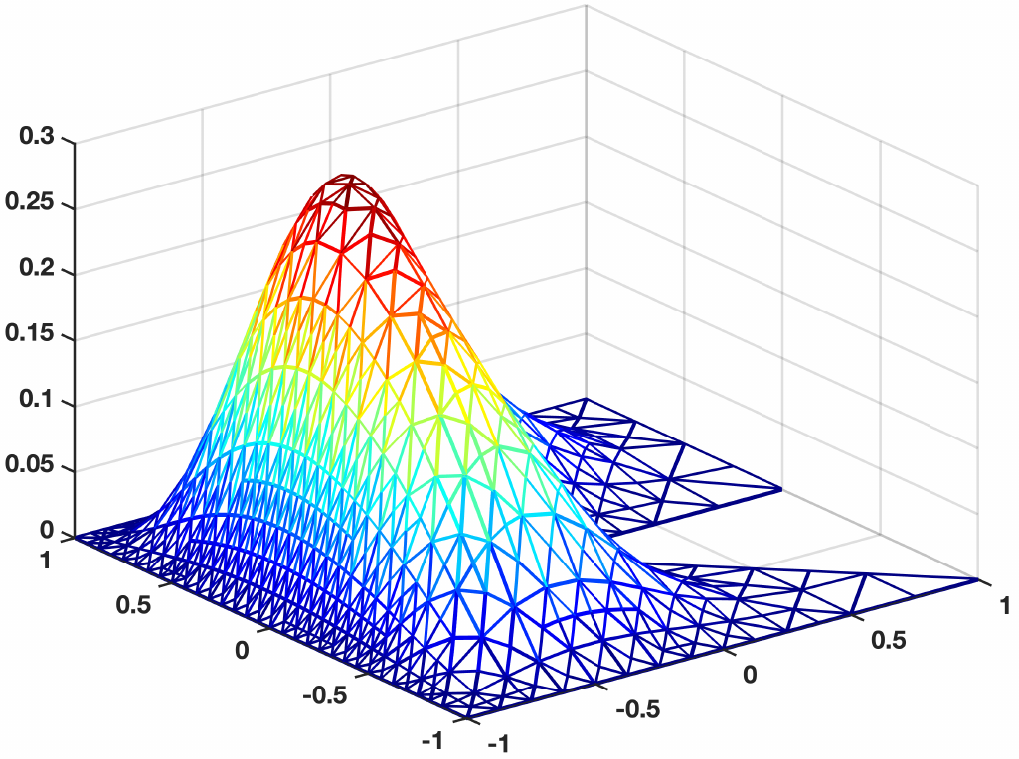}}
   \caption{Adaptive mesh and numerical solution for the biharmonic equation on a domain with a reflex angle using the quadratic C0IP method: (a) adaptively refined mesh; (b) finite element solution. }\label{fig:mesh_obtuseangle}
\end{figure}

\begin{figure}[!ht]
   \centering
   \subcaptionbox{\label{fig:error_obtuseangle}}
  {\includegraphics[width=0.47\textwidth]{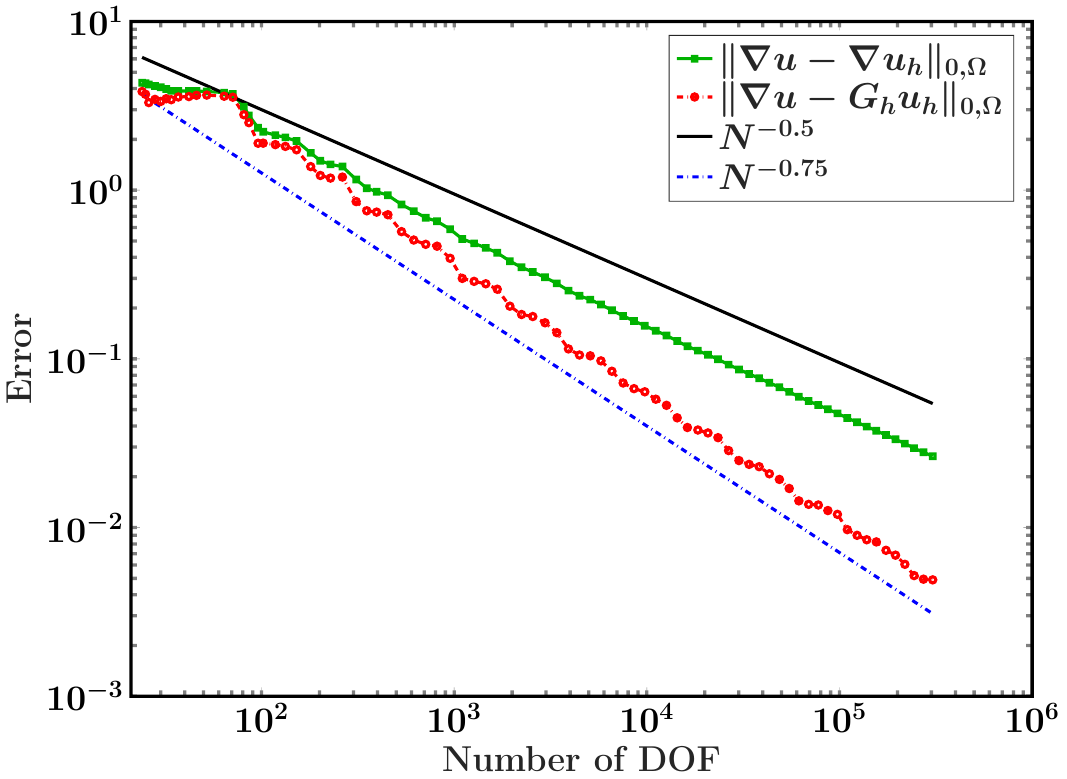}}
   \hspace{0.1in}
    \subcaptionbox{\label{fig:effectiveindex_obtuseangle}}
   {\includegraphics[width=0.45\textwidth]{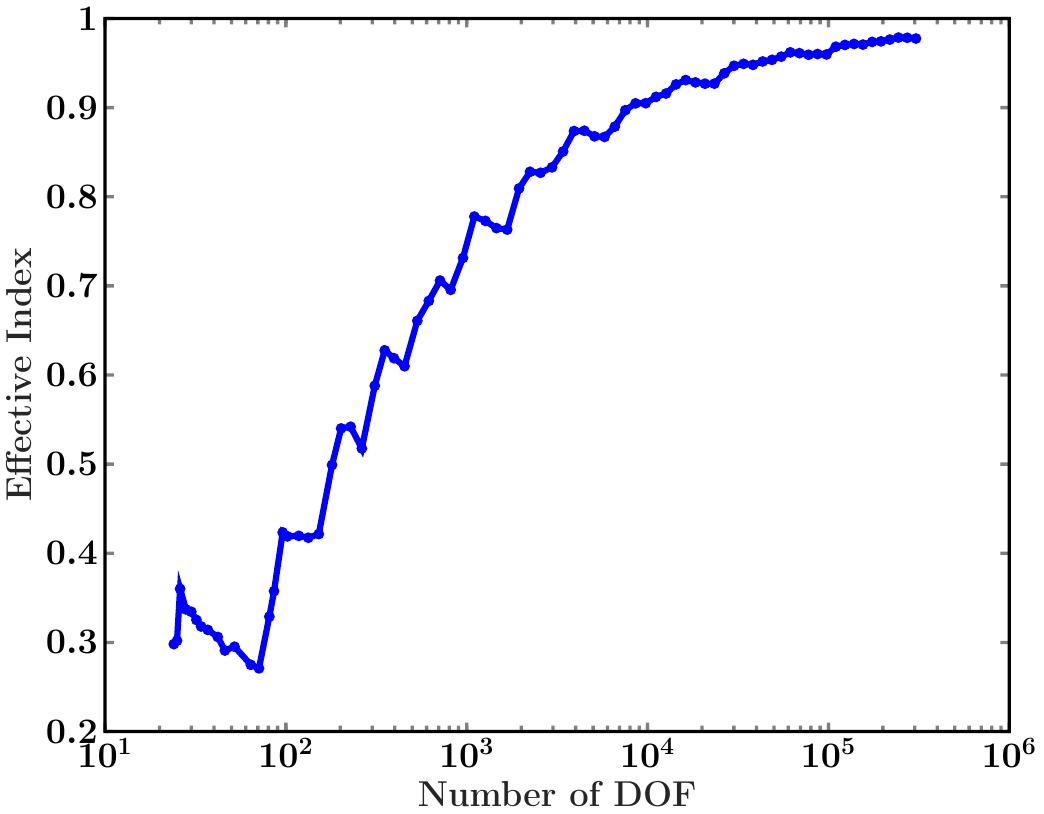}}
\caption{Numerical results for the biharmonic equation on a domain with a reflex angle using the quadratic C0IP method: (a) numerical error; (b) plot of the effective index. }
\label{fig:result_obtuseangle}
\end{figure}

\subsection{Numerical examples of interface problems} \label{ssec:interface_numerical}
In this subsection, we  present several numerical examples to demonstrate the asymptotic exactness of the recovery type {\it a posteriori} error estimator \eqref{equ:localind_interface} for interface problems. 
\subsubsection{Numerical example for body-fitted finite element methods}

\textbf{Numerical example 6} In this example, we decompose the computational domain
 $\Omega = (-1, 1)\times (-1, 1)$ into two parts:
 $\Omega^- = \{ z=(x,y)\in \Omega: x> 0, y>0\}$ and $\Omega^+ = \Omega\setminus \Omega^-$.  The diffusion coefficient $\beta$ in \eqref{equ:interface_equ} is chosen as
 \begin{equation*}
\beta(z) =
\left\{
\begin{array}{ccc}
   \beta^- &   \text{if } z\in \Omega^-,\\
       1 &  \text{if } z\in \Omega^+,
\end{array}
\right.
\end{equation*}
with $\beta^-$ being a constant.  When $f=0$ in \eqref{equ:interface_model},  the exact solution $u$ in polar coordinates is given by
 \begin{equation*}
u(r, \theta) =
\left\{
\begin{array}{llc}
    r^{\mu}\cos(\mu(\theta-\pi/4))&  \text{if } 0 \le \theta \le \pi/2, \\
       r^{\mu}\nu\cos(\mu(\theta-5\pi/4))  &   \text{if } \pi/2 \le \theta \le 2\pi,\\
   \end{array}
\right.
\end{equation*}
where
 \begin{equation*}
\mu = \frac{4}{\pi}\left( \sqrt{\frac{3+\beta^-}{1+3\beta^-}} \right) \quad \text{        and        } \quad
\nu = - \beta^-\frac{\sin(\mu\pi/4)}{\sin(3\mu\pi/4)}.
\end{equation*}
Note that $u \in H^{1+s}(\Omega^\pm)$ for any $0 < s < \mu$.

We begin with a uniform initial mesh of  regular pattern, comprising  right triangles. Figure \ref{fig:adaptvemesh_bffem} illustrates an adaptively refined mesh, while Figure \ref{fig:sol_bffem} presents its corresponding finite element solution. These results demonstrate that the recovery-type \textit{a posteriori} error estimator \eqref{equ:localind_interface} effectively captures the singularity without causing over-refinement.

Figures \ref{fig:result_bffem_10e3} and \ref{fig:result_bffem_10e6} illustrate the numerical convergence rates  $\beta^-=1000$ and $\beta^-=10000$, respectively. In both cases, the results exhibit an optimal convergence rate of $\mathcal{O}(N^{-0.5})$ for the energy error and a superconvergence rate of $\mathcal{O}(N^{-1})$,  aligning with Theorem~\ref{thm:bffem_superconvergence}. The effective index for $ \beta^- = 10^k $ with $ k = 1, \dots, 4 $ is plotted in Figure \ref{fig:effectiveindex_bffem}. For all choices of $ \beta^- $, it demonstrates that the error indicator $ \eqref{equ:localind_interface} $ is an asymptotically exact \textit{a posteriori} error estimator for the interface problem, as stated in Theorem~\ref{thm:ae_interface}.

\begin{figure}[!ht]
   \centering
   \subcaptionbox{\label{fig:adaptvemesh_bffem}}
  {\includegraphics[width=0.39\textwidth]{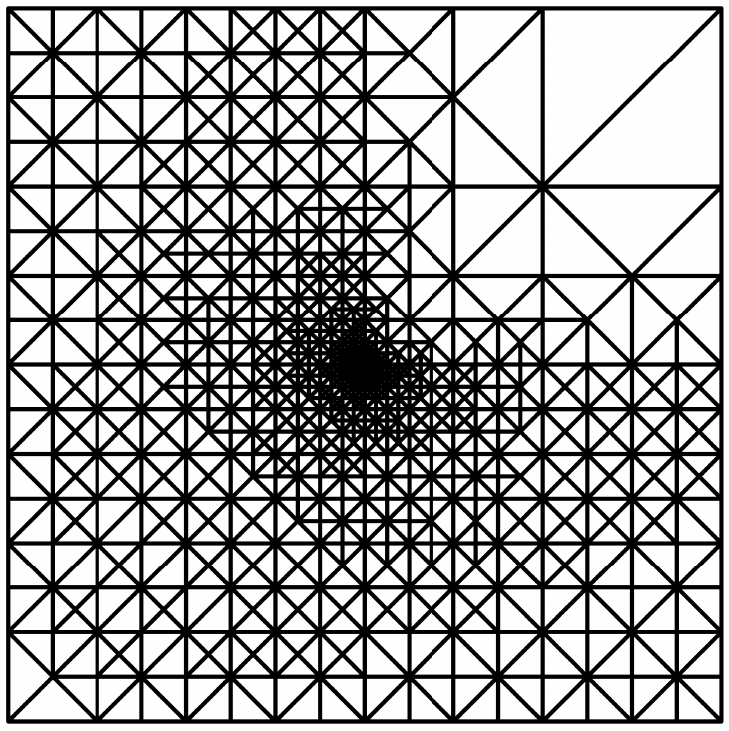}}
  \hspace{0.2in}
  \subcaptionbox{\label{fig:sol_bffem}}
   {\includegraphics[width=0.54\textwidth]{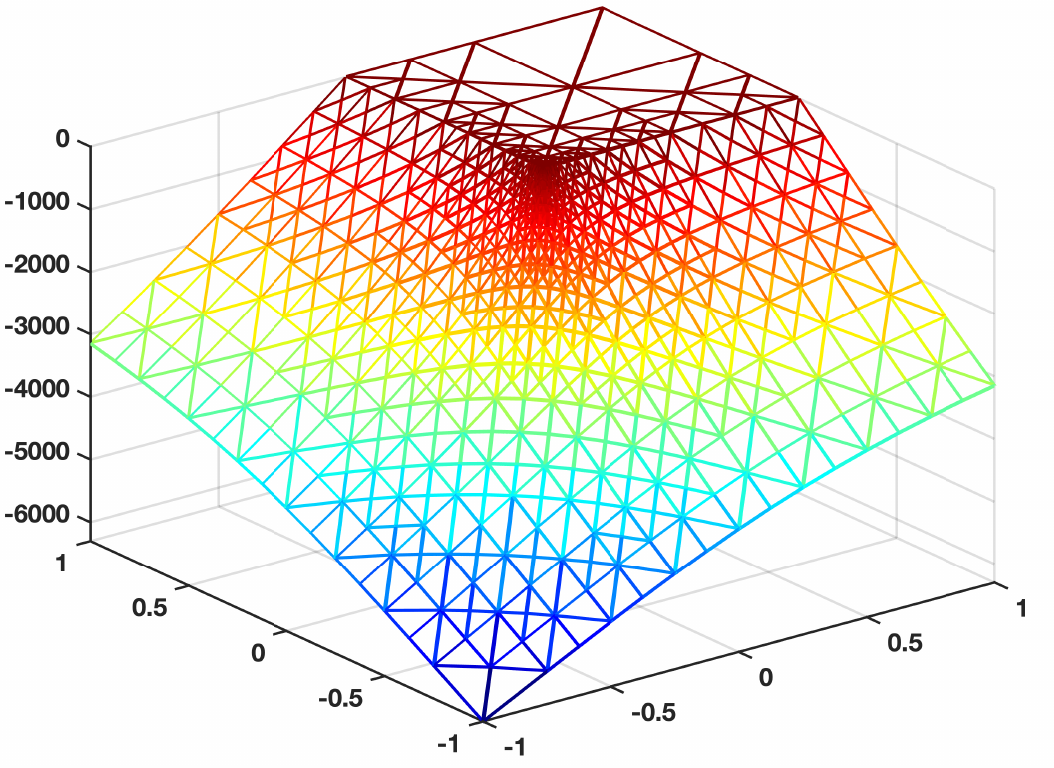}}
   \caption{Adaptive mesh and numerical solution using the body-fitted finite element method with  $\beta^-=1$ and $\beta^+=1000$:  (a) adaptively refined mesh; (b) body-fitted finite element solution. }\label{fig:mesh_bffem}
\end{figure}

\begin{figure}[!ht]
   \centering
   \subcaptionbox{\label{fig:result_bffem_10e3}}
  {\includegraphics[width=0.32\textwidth]{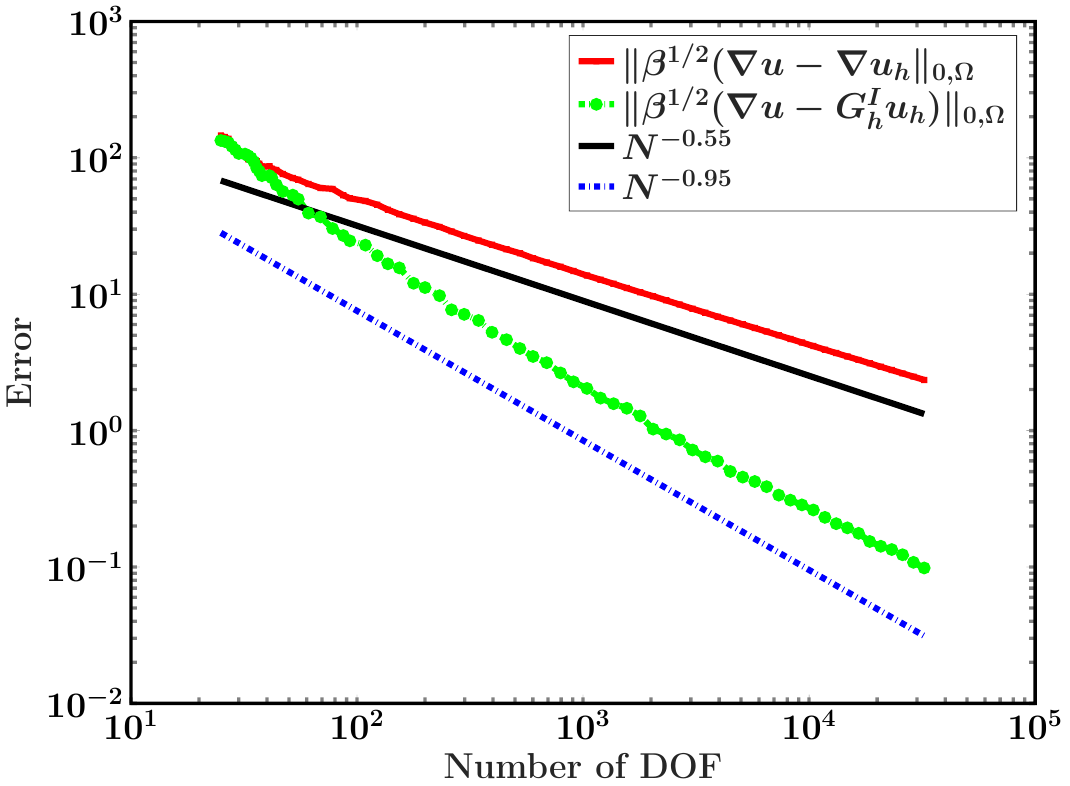}}
  \subcaptionbox{\label{fig:result_bffem_10e6}}
   {\includegraphics[width=0.32\textwidth]{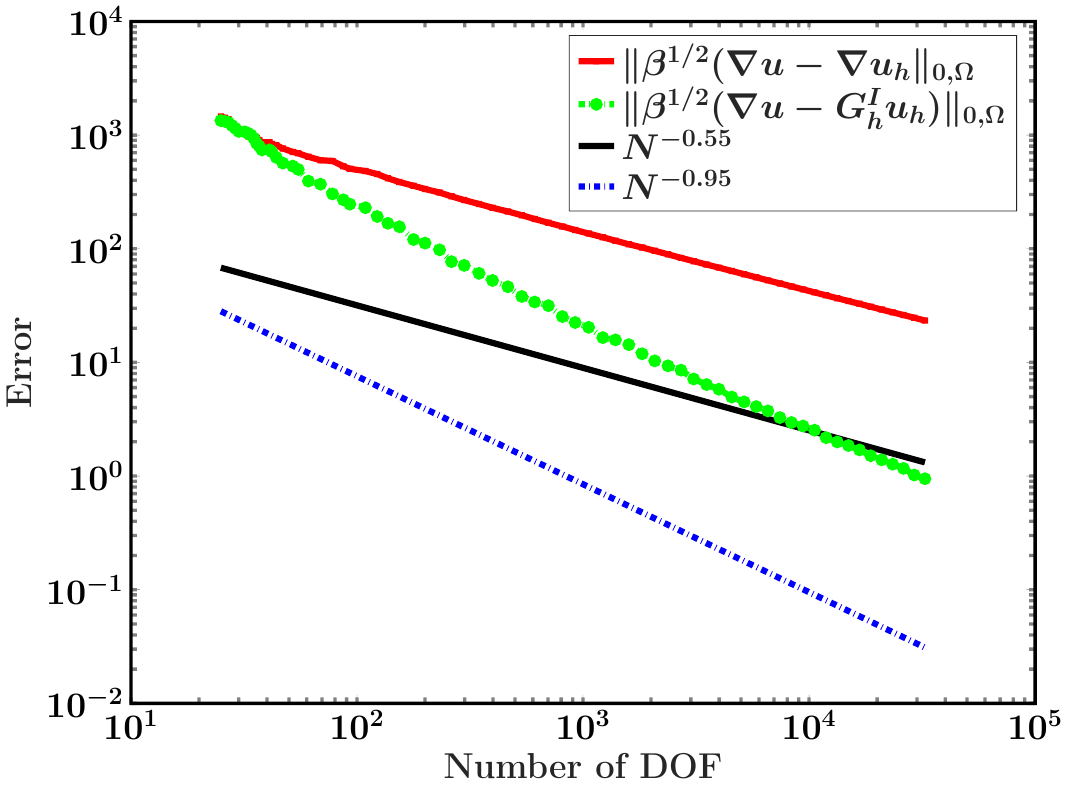}}
    \subcaptionbox{\label{fig:effectiveindex_bffem}}
   {\includegraphics[width=0.31\textwidth]{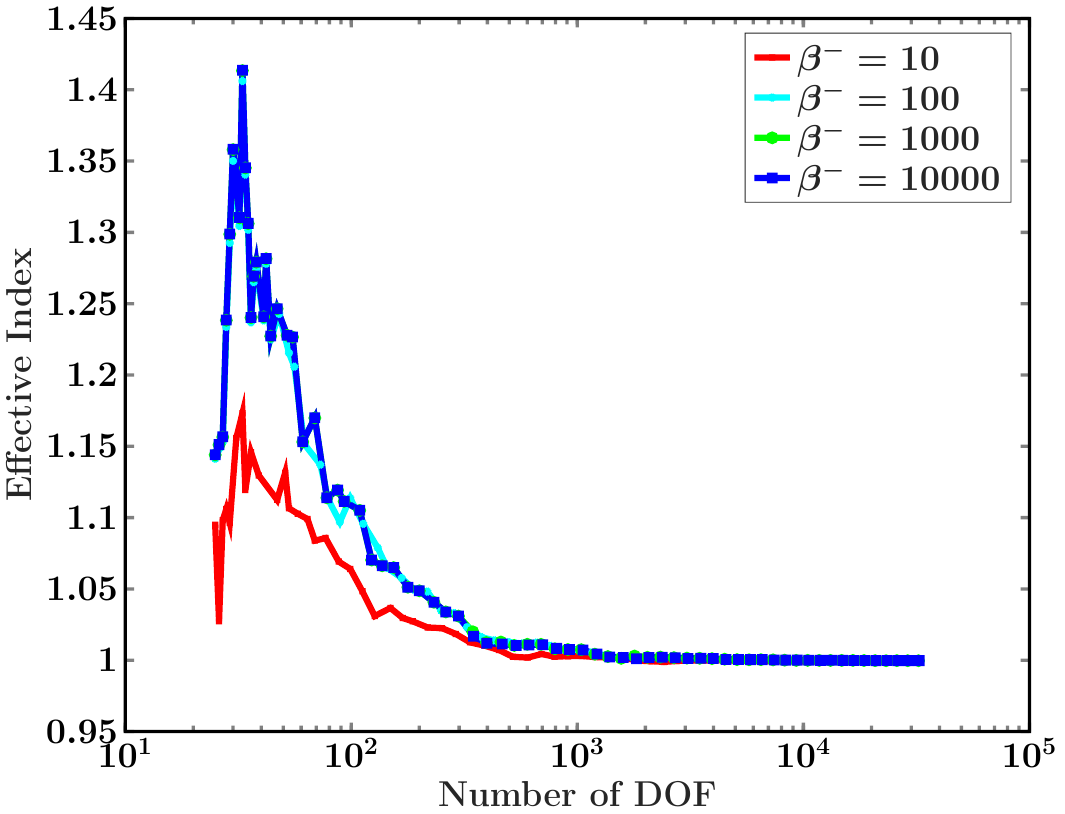}}
\caption{Numerical results for the adaptive body-fitted finite element method:  (a) numerical error for $\beta^+=1000$; (b) numerical error for $\beta^+=1000000$; (c) plot of the effective index. }
\label{fig:result_cutfem}
\end{figure}

\subsubsection{Numerical example for cut finite element methods}

\textbf{Numerical Example 7} In this example, we consider  the interface problem \eqref{equ:interface_equ}-\eqref{equ:interface_fluxjump} with homogeneous jumps. The interface $\Gamma$ a circular interface of radius $r_0 = 0.5$.  The exact solution is
\begin{equation*}
u(x,y) =
\left\{
\begin{array}{ll}
    \frac{r^p}{\beta^-}   &  \text{if }   (x,y)\in \Omega^-, \\
      \frac{r^p}{\beta^+} + \left( \frac{1}{\beta^-}-\frac{1}{\beta^+} \right)r_0^p&  \text{if } (x,y)\in \Omega^+,\\
   \end{array}
\right.
\end{equation*}
where $r = \sqrt{x^2+y^2}$.

When $p = 0.5$, there is a singularity at the origin, as plotted in Figure \ref{fig:sol_cutfem}. In fact, we can show that $u \in H^{1.5 - \epsilon}$ for small $\epsilon > 0$. The cut finite element method with uniform refinement can only achieve suboptimal convergence of $\mathcal{O}(N^{-0.25})$. We use the adaptive cut finite element method with the error estimator \eqref{equ:localind_interface}. The mesh is adaptively refined using the newest vertex bisection method without fitting the interface.
We consider two typical jump ratios: $\beta^- / \beta^+ = 1/1000$ (large jump) and $\beta^- / \beta^+ = 1/1000000$ (huge jump). In Figure \ref{fig:mesh_cutfem}, we plot the adaptively refined mesh and its corresponding cut finite element solution when $\beta^- = 1$ and $\beta^+ = 1000$. It can be seen from the figure that the mesh is refined in the vicinity of the singularity.

Figures \ref{fig:result_cutfem_10e6} show the numerical errors for both the large and huge jump cases. One can observe the optimal convergence rate of $\mathcal{O}(N^{-0.5})$ for the energy error and the superconvergence rate of $\mathcal{O}(N^{-1})$ for the recovered energy error. Figure \ref{fig:effectiveindex_cutfem} presents the history of the effective index. The recovery type a posteriori error estimator \eqref{equ:localind_interface} is asymptotically exact, which is consistent with Theorem \ref{thm:ae_interface}.

\begin{figure}[!ht]
   \centering
   \subcaptionbox{\label{fig:adaptvemesh_cutfem}}
  {\includegraphics[width=0.395\textwidth]{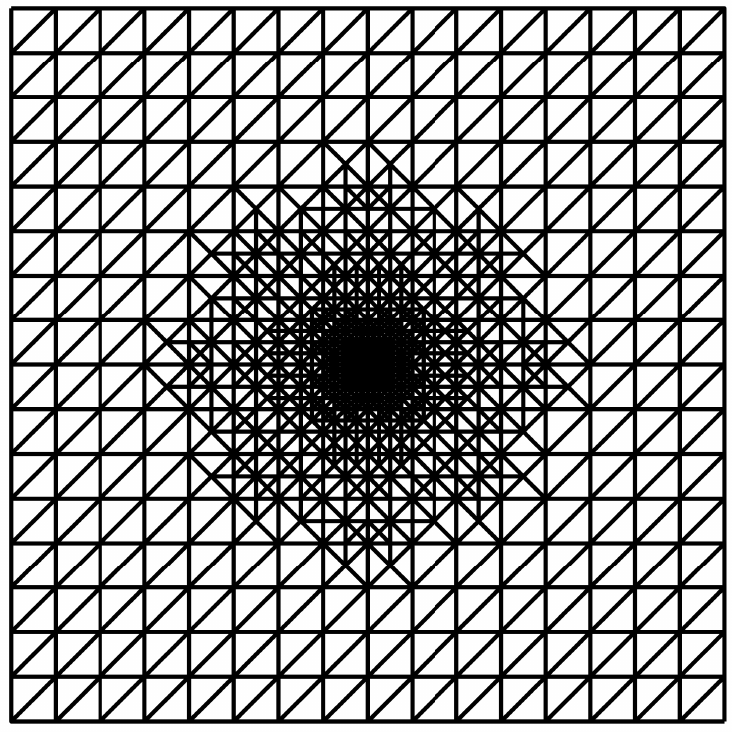}}
  \hspace{0.2in}
  \subcaptionbox{\label{fig:sol_cutfem}}
   {\includegraphics[width=0.555\textwidth]{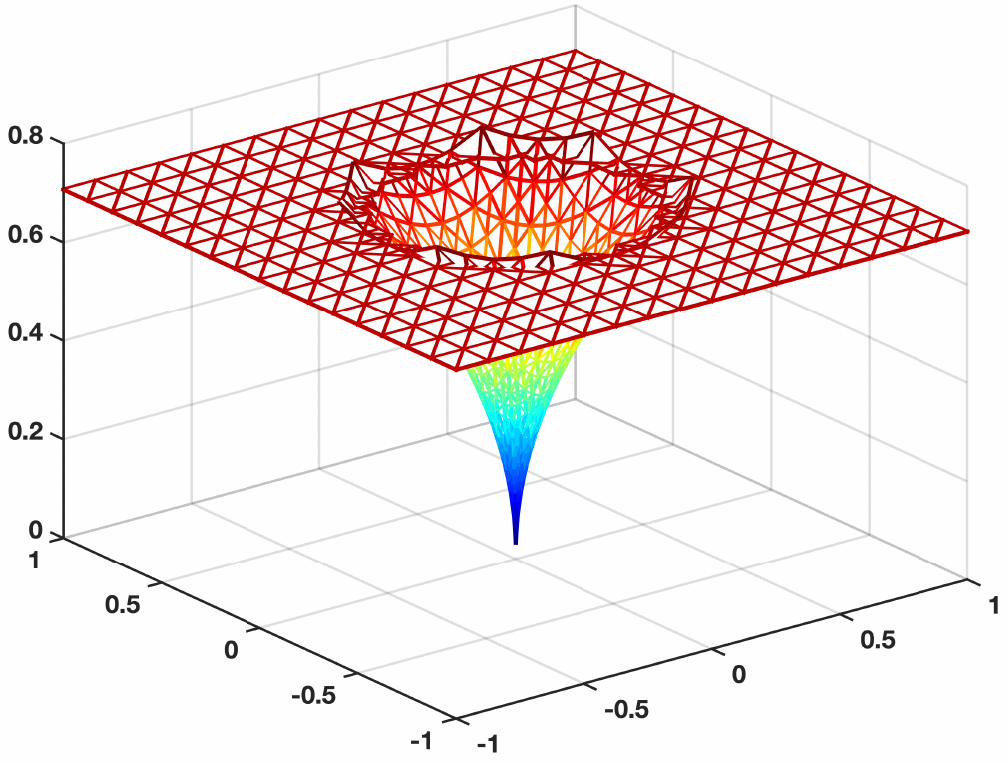}}
   \caption{Adaptive mesh and numerical solution using the cut finite element method with $\beta^-=1$ and $\beta^+=1000$: (a) adaptively refined mesh; (b) cut finite element solution. }\label{fig:mesh_cutfem}
\end{figure}

\begin{figure}[!ht]
   \centering
    \subcaptionbox{\label{fig:result_cutfem_10e3}}
   {\includegraphics[width=0.32\textwidth]{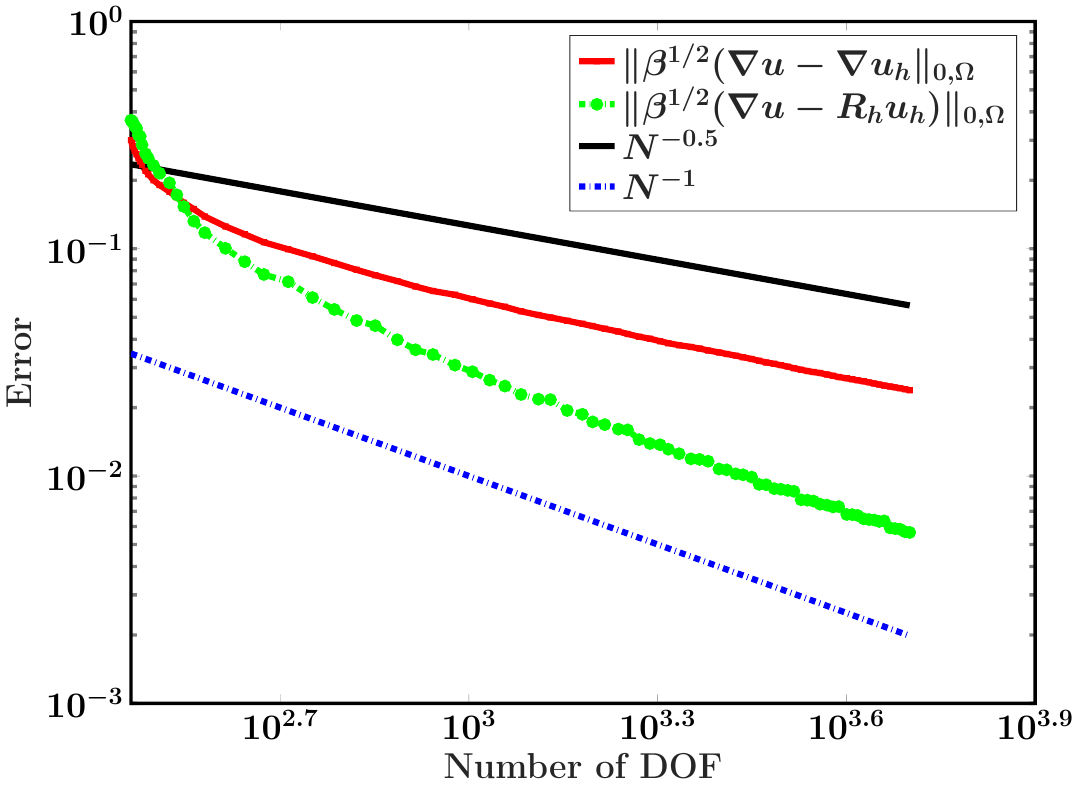}}
  \subcaptionbox{\label{fig:result_cutfem_10e6}}
   {\includegraphics[width=0.32\textwidth]{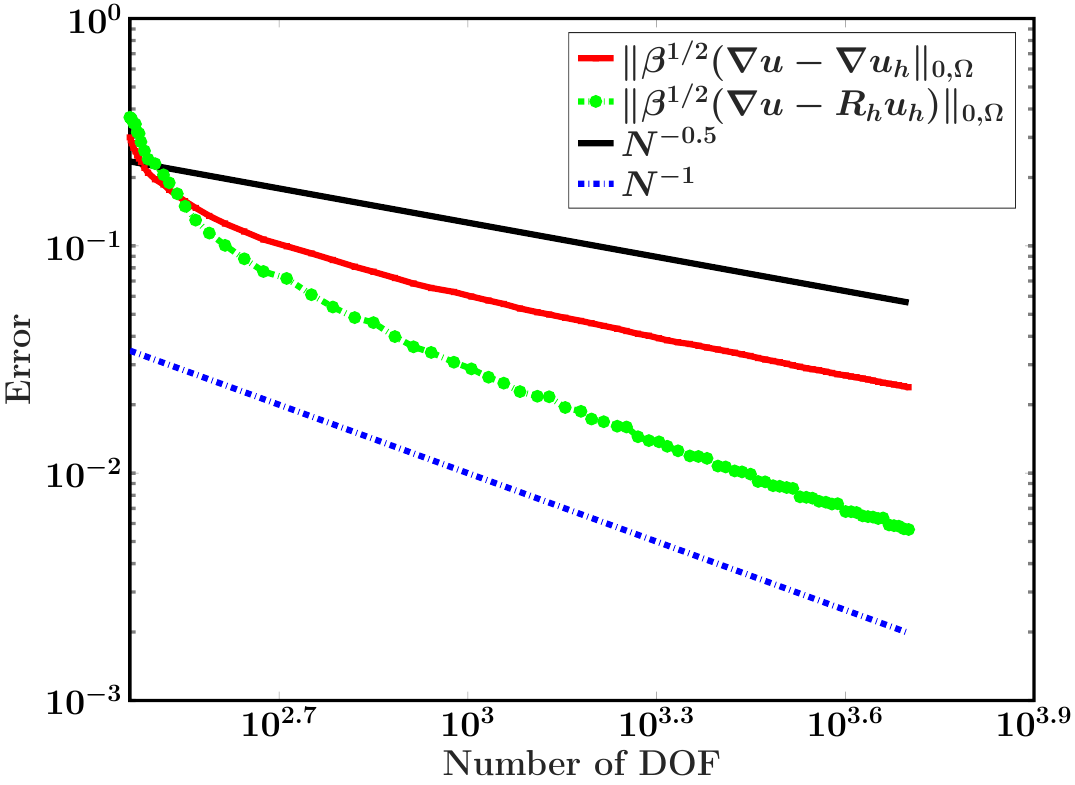}}
    \subcaptionbox{\label{fig:effectiveindex_cutfem}}
   {\includegraphics[width=0.32\textwidth]{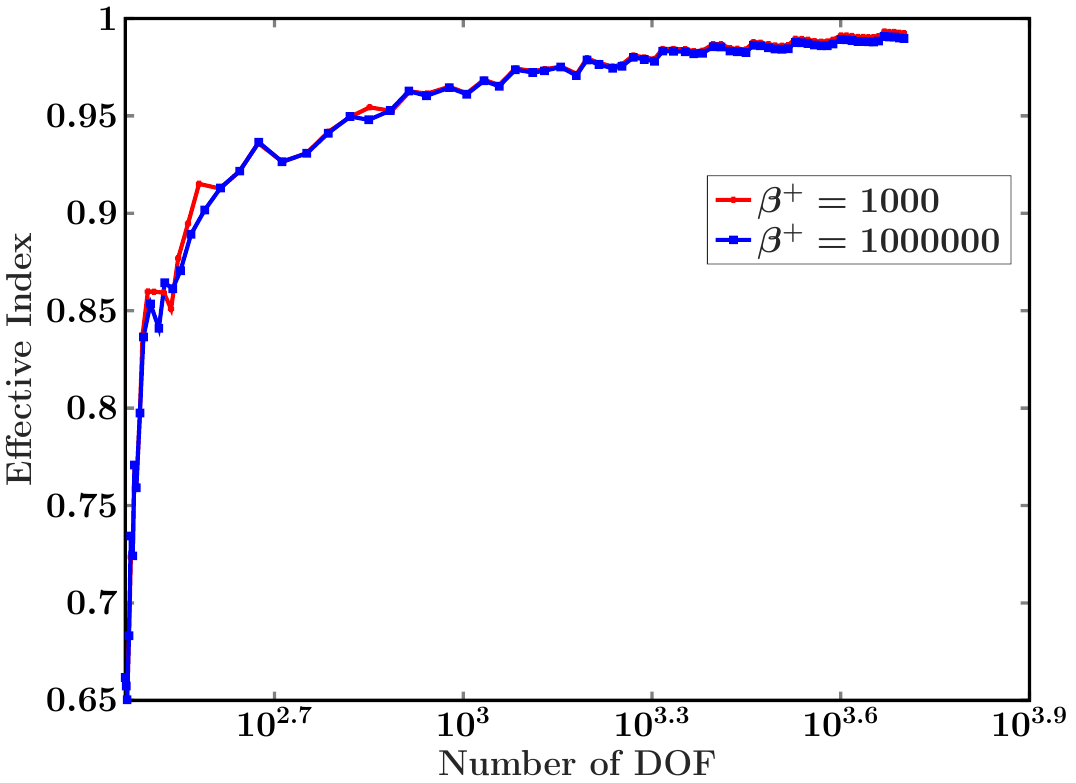}}
\caption{Numerical results for the adaptive cut finite element method: (a) numerical error for $\beta^+=1000$; (b) numerical error for $\beta^+=1000000$; (c) plot of the effective index. }
\label{fig:result_cutfem1}
\end{figure}

\section{Conclusion}\label{sec:con}
In this paper, we present a review of polynomial preserving recovery and its application in constructing a simple \textit{a posteriori} error estimator for adaptive methods. Assuming certain conditions on the underlying meshes, we establish the superconvergence and, in some cases, the ultraconvergence properties of the recovery operators. Using these superconvergence results, we show that the ZZ-type recovery based \textit{a posteriori} error estimator is asymptotically exact. Benchmark numerical examples are provided to demonstrate the asymptotic exactness of recovery-type \textit{a posteriori} error estimators. In this paper, we focus on the two-dimensional case with triangular meshes. We also draw the reader's attention to the fact that superconvergence results have been studied in parallel using quadrilateral elements in 2D \cite{zhang2004pprquadrilaterials} and tetrahedral and hexahedral elements in 3D \cite{naga2005ppr2d3d, chen2014super3d}.

We wish to emphasize that the application of recovery techniques has been significantly extended beyond their original use in \textit{a posteriori} error estimation. These techniques now play an important  role across various  scenarios, from post-processing to pre-processing. In the following, we provide an incomplete list of their applications:
\begin{itemize}
	\item Eigenvalue enhancement \cite{naga2006eigenenhance, wu2009eigenvalueadaptive, naga2012functionrecovery, guo2017superconvergenttwogrid, guo2019blochrecovery, cai2024surfacebiharmonic}.
	\item Anisotropic mesh adaption \cite{huang2011movingmesh, picasso2003anisotropic, fortin2024recovery}.
	\item  Development of new finite element methods for high-order partial differential equations  \cite{chen2017rbfemeigenvalue, guo2018linearfem, xu2019hbfem, adetola2022recovery, guo2018sixthorder}.
	\item Design of novel finite element methods for partial differential equations in non-divergence form \cite{chu2024rbfemhjb, lakkis2011nondiv, xu2023nondivergence}.
	\item Discretization of level set functions \cite{reusken2013levelsetrecovery}.
	\item High-order geometric discretization for surfaces  \cite{dong2024super, grande2016highsurface}.
	\item Gradient enhancement  in discretization nonlinear partial differential equations \cite{dong2024ot, wu2024pprpic}.
\end{itemize}

\section*{Acknowledgments}
We would like to thank the editor for their valuable comments and suggestions, which have significantly improved the paper. We also thank Dr. Ying Cai and Dr. Ren Zhao for reading the draft and for their helpful discussions.

H. Guo was supported in part by the Andrew Sisson Fund, Dyason Fellowship, and the Faculty of Science Researcher Development Grant of the University of Melbourne. Z. Zhang was supported in part by the NSFC grant 12131005.

\bibliographystyle{plainurl} 
\bibliography{references}

\end{document}